\newtheorem{theorem}{Theorem}[section]
\newtheorem{lemma}[theorem]{Lemma}
\newtheorem{proposition}[theorem]{Proposition}
\newtheorem{corollary}[theorem]{Corollary}
\newtheorem{result}{Result}
\theoremstyle{definition}
\newtheorem{definition}[theorem]{Definition}
\newcommand{\B}{\mathcal B}
\renewcommand{\H}{\mathcal H}
\renewcommand{\L}{\mathcal L}
\newcommand{\Q}{\mathcal Q}
\newcommand{\X}{\mathcal X}
\newcommand{\Y}{\mathcal Y}
\newcommand{\Z}{\mathcal Z}
\newcommand{\qqq}{\mathbf q}
\renewcommand{\:}{\colon}
\newcommand{\inv}{^{-1}}
\newcommand{\subsetof}{\subseteq}
\newcommand{\suchthat}{\,|\,}
\renewcommand{\And}{\mathbin{\wedge}}
\newcommand{\Or}{\mathbin{\vee}}
\newcommand{\Not}{\neg}
\newcommand{\Implies}{\mathbin{\rightarrow}}
\newcommand{\Andthen}{\mathbin{\&}}
\newcommand{\Comp}{\mathbin{\bot\!\!\!\bot}}
\newcommand{\Proves}{\vdash}
\newcommand{\IM}{\quad\Longrightarrow\quad}
\newcommand{\EV}{\quad\Longleftrightarrow\quad}
\newcommand{\cat}{\mathbf}
\newcommand{\Rel}{\mathrm{Rel}}
\renewcommand{\[}{\llbracket\,}
\renewcommand{\]}{\,\rrbracket}
\def\drv#1#2{\infer{#2}{#1}}
\begin{document}

\title{A natural deduction system for orthomodular logic}
\author{Andre Kornell}
\address{Department of Computer Science, Tulane University, New Orleans, Louisiana 70118}
\email{akornell@tulane.edu}
\thanks{This work was supported by the AFOSR under MURI grant FA9550-16-1-0082.}

\begin{abstract}
Orthomodular logic is a weakening of quantum logic in the sense of Birkhoff and von Neumann. Orthomodular logic is shown to be a nonlinear noncommutative logic. Sequents are given a physically motivated semantics that is consistent with exactly one semantics for propositional formulas that use negation, conjunction, and implication. In particular, implication must be interpreted as the Sasaki arrow, which satisfies the deduction theorem in this logic. As an application, this deductive system is extended to two systems of predicate logic: the first is sound for Takeuti's quantum set theory, and the second is sound for a variant of Weaver's quantum logic.
\end{abstract}

\maketitle

\section{Introduction}\label{S1}

Quantum logic as it was first defined by Birkhoff and von Neumann \cite{BirkhoffvonNeumann} is a semantics for propositional formulas that use negation, conjunction, and disjunction. Propositional letters are interpreted as closed subspaces of a separable infinite-dimensional complex Hilbert space. The closed subspaces of a Hilbert space form a complete lattice that is equipped with an order-reversing involution that takes each element to a complement of that element. Meets interpret conjunction, joins interpret disjunction, and this involution, called orthocomplementation, interprets negation. The partial order then yields a notion of entailment for propositional formulas. Because the separable infinite-dimensional Hilbert space is unique up to isomorphism, this notion of entailment may be regarded as canonical.

This semantics for propositional formulas is well motivated physically. The physical system that consists of a single quantum particle may be modeled by a separable infinite-dimensional complex Hilbert space \cite{vonNeumann}, and it may be that every physical system can be modeled in this way \cite{JaffeWitten}. In this context, the closed subspaces of the Hilbert spaces model true-false-valued physical quantities that are measurable in principle. In short, the closed subspaces of the Hilbert space are propositions about the physical system. 

The inclusion of one closed subspace into another models the entailment of propositions in the fully literal sense that measuring the antecedent proposition to be true guarantees that a successive measurement of the consequent proposition will find it to be true as well. The orthocomplementation of closed subspaces likewise models the negation of propositions in the fully literal sense that measuring the negation of a proposition to be true guarantees that a successive measurement of that proposition will find it to be false. Unfortunately, there is no such simple account of conjunction and disjunction because measuring the truth-value of one proposition may alter the truth-value of another. Nevertheless, the conjunction of two propositions is the weakest proposition that entails both conjuncts.

The development of quantum logic has proceeded slowly by modern standards. Soon after the introduction of quantum logic, Husimi observed that this orthocomplemented lattice of propositions satisfies the orthomodular law \cite{Husimi}:
\begin{equation*} a \leq b \IM a \Or (\Not a \And b) = b,\end{equation*}
where $\Not a$ is the orthocomplement of $a$. It is equivalent to the validity of the entailment $\psi \Proves \phi \Or(\Not \phi \And (\phi \Or \psi)))$. Those orthocomplemented lattices that satisfy the orthomodular law are called orthomodular lattices \cite{Holland}. The first entailment that is valid in quantum logic but not in orthomodular lattices was discovered by Greechie \cite{Greechie}. A number of other examples were then found and investigated \cite{Godowski}\cite{Mayet}\cite{MegillPavicic}\cite{Mayet2}\cite{PavicicMegill2}\cite{Mayet3}\cite{MegillPavicic2}\cite{MegillPavicic3}. Recently, building on the work of Slofstra \cite{Slofstra}, Fritz showed that the first-order theory of the ortholattice of closed subspaces of a separable infinite-dimensional Hilbert space is undecidable \cite{Fritz}. It is apparently still unknown whether the validity of an entailment in quantum logic is decidable.

This paper introduces a deductive system for orthomodular logic, which is also known as orthomodular quantum logic and sometimes simply as quantum logic. The valid entailments of orthomodular logic are those inequalities that hold universally in all orthomodular lattices. Orthomodular logic may be regarded as a pragmatic approximation to quantum logic or, more speculatively, as a generalization of quantum logic that is applicable to physical models with exotic scalars such as formal Laurent series \cite{GrossKunzi}\cite{KellerOchsenius}. An abundance of deductive systems for orthomodular logic have been proposed \cite{Kalmbach}\cite{Dishkant}\cite{Mittelstaedt2}\cite{Goldblatt}\cite{DallaChiara}\cite{Zeman}\cite{Nishimura}\cite{Hardegree1}\cite{CutlandGibbins}\cite{Pavicic}\cite{DelmasRigoutsos}\cite{PavicicMegill}\cite{CattaneoDallaChiaraGiuntiniPaoli}\cite{Titani}; most are glossed in \cite{PavicicMegill3}*{sec.~1} along with other systems of derivation. These deductive systems range from Hilbert-style systems such as \cite{Kalmbach} and \cite{DallaChiara} to Gentzen-style systems such as \cite{Nishimura} and \cite{DelmasRigoutsos}.
What is the use of yet another deductive system?

Quantum logic has been the subject of criticism on several fronts: that it is defective as a logic, that it is conceptually misguided, and that it is useless. The first of these critiques often focuses on the absence of a well-behaved implication that satisfies the deduction theorem, e.g., ``there is no obvious notion of implication or deduction''\cite{AbramskyCoecke}, ``no satisfactory implication operator has been found (so that there is no deductive system in quantum logic)''\cite{HeunenLandsmanSpitters}. The former authors go on to observe that ``quantum logic was therefore always seen as logically very weak, or even a non-logic.'' The second critique is more philosophical and has reached extreme conclusions, e.g., ``the tale of quantum logic is not the tale of a promising idea gone bad, it is rather the tale of the unrelenting pursuit of a bad idea''\cite{Maudlin}, ``quantum logic was a mistake \textit{a priori}''\cite{Girard}. The third critique is simply that quantum logic has not played its hoped-for role in either physics or in mathematics, e.g., ``quantum logic ...\ has never yielded the goods''\cite{Maudlin}, ``it has proved to be \emph{mathematically sterile}: it fails to link up in interesting ways with mainstream developments in mathematical physics''\cite{Halvorson}.

This paper offers some evidence to the contrary for each of these three critiques. First, it introduces a natural deductive system for orthomodular logic that does carry a canonical notion of implication that satisfies the deduction theorem. Second, it introduces a natural semantics for sequents that mirrors experimental procedure. Third, it generalizes this deductive system to a quantum predicate logic that is applicable to discrete quantum structures in the sense of noncommutative geometry. This is the application that motivated this research.

The immediate subject of this paper is the following deductive system:

\begin{definition}\label{1.1}
We define the propositional deductive system $NOM$:
\begin{enumerate}
\item a \emph{formula} is built up from propositional letters using the connectives $\And$, $\Implies$, and $\neg$;
\item a \emph{sequent} is an expression of the form $\phi_1, \ldots, \phi_n \Proves \psi$, where $\phi_1 \ldots, \phi_n$, and $\psi$ are formulas with $n \geq 0$;
\item a sequent is \emph{derivable} if it can be derived using the inference rules in Figure~\ref{F1}, where $\phi$, $\psi$, and $\chi$ are formulas and $\Gamma$ is a finite sequence of formulas, possibly empty.
\end{enumerate}
\end{definition}

\begin{figure}[h]

\begin{equation*}
\drv
{}
{\Gamma, \phi \Proves \phi}
\end{equation*}

\begin{equation*}
\drv
{\Gamma \Proves \phi & \Gamma, \phi \Proves \psi}
{\Gamma \Proves \psi}
\qquad\qquad
\drv
{\Gamma \Proves \phi & \Gamma \Proves \psi}
{\Gamma, \phi \Proves \psi}
\end{equation*}

\begin{equation*}
\drv
{\Gamma, \phi, \psi \Proves \phi &\Gamma, \phi, \psi \Proves \chi &   \Gamma, \psi, \phi \Proves \psi}
{\Gamma, \psi, \phi \Proves \chi}
\end{equation*}

\begin{equation*}
\drv
{\Gamma \Proves \phi & \Gamma \Proves \psi}
{\Gamma \Proves \phi \And \psi}
\qquad\qquad
\drv
{\Gamma \Proves \phi \And \psi}
{\Gamma \Proves \phi}
\qquad\qquad
\drv
{\Gamma \Proves \phi \And \psi}
{\Gamma \Proves \psi}
\end{equation*}

\begin{equation*}
\drv
{\Gamma, \phi \Proves \psi}
{\Gamma \Proves \phi \Implies \psi}
\qquad \qquad
\drv
{\Gamma \Proves \phi \Implies \psi}
{\Gamma, \phi \Proves \psi}
\end{equation*}

\begin{equation*}
\drv
{\Gamma, \phi \Proves \psi & \Gamma, \Not \phi \Proves \psi}
{\Gamma \Proves \psi}
\qquad \qquad
\drv
{\Gamma \Proves \Not \phi}
{\Gamma, \phi \Proves \psi}
\end{equation*}
\caption{The deductive system $NOM$.}\label{F1}
\end{figure}

\noindent
The antecedent of a sequent is a sequence of formulas, not a set. The intuitive meaning of a judgement $\phi_1, \ldots, \phi_n \Proves \psi$ is that after verifying each of the formulas $\phi_1, \ldots, \phi_n$ via measurement and in that order, a measurement of the truth-value of $\psi$ is guaranteed to find that it is true. The semantics that we will shortly define is entirely faithful to this intuition.

The deductive system $NOM$ includes an implication connective that satisfies the deduction theorem by fiat. Furthermore, it uniquely determines a notion of implication in orthomodular logic:

\begin{result}[Sasaki arrow]\label{R1}
The following rules of inference are derivable in $NOM$:
\begin{equation*}
\begin{aligned}
\drv
{\Gamma \Proves \phi \Implies \psi}
{\Gamma \Proves \Not (\phi \And \Not (\phi \And \psi))}
\end{aligned}\;, \qquad \qquad
\begin{aligned}
\drv
{\Gamma \Proves \Not (\phi \And \Not (\phi \And \psi))}
{\Gamma \Proves \phi \Implies \psi}
\end{aligned}\;.
\end{equation*}
\end{result}

\noindent This implication connective is of course the familiar Sasaki arrow \cite{HermanMarsdenPiziak}\cite{Sasaki}. A number of authors have argued that the Sasaki arrow is the natural implication connective for quantum logic \cite{Kunsemuller}\cite{Finch}\cite{Mittelstaedt}\cite{Mittelstaedt2}\cite{Hardegree}\cite{Hardegree2}. Result~\ref{R1} motivates the Sasaki arrow purely from logical considerations; we have made no assumptions on the implication connective except those that are implicit in the notion itself; c.f.~\cite{Mittelstaedt3}*{p.~187}.

The deductive system $NOM$ might be called a substructural logic because its logical rules are all familiar rules of inference from classical logic. The weakening and contraction rules
\begin{equation*}
\begin{aligned}
\drv
{\Gamma \Proves \psi}
{\phi, \Gamma \Proves \psi}
\end{aligned},
\qquad \qquad
\begin{aligned}
\drv
{\Gamma, \phi, \phi \Proves \chi}
{\Gamma, \phi \Proves \chi}
\end{aligned}
\end{equation*}
are both admissible in $NOM$, but the exchange rule
\begin{equation*}
\begin{aligned}
\drv
{\Gamma, \phi, \psi \Proves \chi}
{\Gamma, \psi, \phi \Proves \chi}
\end{aligned}
\end{equation*}
is not admissible. The addition of this exchange rule recovers classical logic:
\begin{result}[noncommutativity]\label{R2}
Let $\phi_1, \ldots, \phi_n$, and $\psi$ be propositional formulas in the connectives $\And$, $\Implies$, and $\Not$. Then, the following are equivalent:
\begin{enumerate}
\item the sequent $\phi_1, \ldots, \phi_n \Proves \psi$ is derivable in $NOM$ with the exchange rule;
\item the formula $(\phi_1 \And \cdots \And \phi_n) \to \psi$ is classically derivable.
\end{enumerate}
\end{result}
\noindent The deductive system $NOM$ is not formally a substructural logic \cite{Restall} because the cut rule that is standard in substructural logics is not admissible. Nevertheless, Result~\ref{R2} justifies the conceptual claim that $NOM$ is a noncommutative logic.

The idea that orthomodular logic could be a noncommutative logic is natural and even expected due to the prominent role of noncommutativity in quantum theory. Noncommutativity is a key feature in the standard general formulation of Heisenberg's uncertainty principle \cite{Griffiths}. It is also the defining feature of quantum mathematics in the sense noncommutative geometry \cite{GraciaBondiaVarillyFigueroa}. Furthermore, Malinowski proved in essence that no commutative formulation of orthomodular logic can satisfy the deduction theorem \cite{Malinowski}*{Thm.~2.7}, though a ``semiclassical'' deductive theorem may be formulated \cite{Titani}*{p.~668}.

It is therefore surprising that no noncommutative deductive systems for orthomodular logic have appeared in the literature. This is all the more surprising because the idea that the order of assumptions should be significant to quantum logic has been investigated before. In this respect, the nearest proof system to $NOM$ is probably Mittelstaedt's quantum dialogic \cite{Mittelstaedt3}. This is essentially a game-theoretic semantics that formalizes a debate between a proponent and an opponent of a proposition. The players are constrained in when they may cite a previously proved proposition, and this constraint reflects that the result of one measurement may be ``destroyed'' by another \cite{Mittelstaedt3}*{p.~178}. The same intuition explains why all sequents of the form $\Gamma, \phi, \psi \Proves \psi$ are derivable in $NOM$, but not all sequents of the form $\Gamma, \psi, \phi \Proves \psi$ are derivable. Surprisingly, the natural deduction system that has been obtained from quantum dialogic is commutative \cite{Stachow}.

A noncommutative semantics for sequents was previously considered by Bell, and furthermore, his semantics satisfies the deduction theorem \cite{Bell}*{eq.~4.5}. However, the deductive system $NOM$ is not sound for Bell's semantics \cite{Bell}*{p.~95}, and Bell describes the specification of a deductive system for his semantics as an ``open problem'' \cite{Bell}*{p.~97}.

We define a semantics for $NOM$ that interprets each sequent of experimentally decidable propositions as an experimentally falsifiable proposition. For any interpretation of formulas of any kind as true-false-valued observables on a physical system, the naive meaning of a judgement $\phi_1, \ldots, \phi_n \Proves \psi$ is that whenever we have verified the formulas $\phi_1, \ldots, \phi_n$, we may also be certain of the truth of $\psi$. If we formalize these observables by closed subspaces of a Hilbert space and we notate our interpretation by $\[\cdot\]$, then this condition is equivalent to the inclusion $\[\phi_1\] \Andthen \cdots \Andthen \[\phi_n\] \leq \[ \psi\]$, where $\Andthen$ is the Sasaki projection \cite{Sasaki}. This equivalence appears to be folklore; we record a proof of it in Appendix~\ref{A}.

If $A$ and $B$ are closed subspaces of a Hilbert space, then $A \Andthen B$ may be defined as the closure of the projection of $A$ onto $B$. Thus, $A \Andthen B$ is spanned by those vector states that may be obtained by verifying $A$ and then verifying $B$, where the term ``verify'' means ``measure to be true''. The four structural rules of $NOM$ are valid for this interpretation of sequents in any physical system that can modeled using a Hilbert space, in contrast to the structural rules of classical natural deduction. The seven logical rules of $NOM$ simply express the familiar meanings of the logical connectives $\And$, $\Implies$, and $\Not$.

We may formalize this discussion. In any orthomodular lattice, we may define the Sasaki projection of elements $a$ and $b$ to be the element $a \Andthen b = (a \Or \Not b) \And b$. Our convention is that the connective $\Andthen$ associates to the left. Using this definition, we may interpret sequents in any orthomodular lattice, and the deductive system $NOM$ is sound and complete for the resulting semantics:

\begin{result}[soundness]\label{R3}
Let $\Phi$ be the set of formulas of $NOM$, let $\Q$ be an orthomodular lattice, and let $\[\cdot\]\: \Phi \to \Q$ be any surjective function. The following are equivalent:
\begin{enumerate}
\item the set of all sequents $\phi_1, \ldots, \phi_n \Proves \psi_0$ such that $\[\phi_1\] \Andthen \cdots \Andthen \[\phi_n\] \leq \[\psi_0\]$ is closed under the rules of inference of $NOM$;
\item the function $\[\cdot\]$ satisfies
\begin{enumerate}
\item $\[\phi \And \psi \] = \[\phi\] \And \[\psi\]$,
\item $\[\phi \Implies \psi\] = \Not \[\phi\] \Or (\[\phi\] \And \[\psi\])$,
\item $\[\Not \phi\] = \Not \[\phi\]$.
\end{enumerate}
\end{enumerate}
\end{result}\label{R4}
\begin{result}[completeness]
Let $\phi_1, \ldots, \phi_n \Proves \psi_0$ be a sequent of $NOM$. Assume that the equation $\[\phi_1\] \Andthen \cdots \Andthen \[\phi_n\] \leq \[\psi_0\]$ holds for every function $\[\cdot\]: \Phi \to \Q$ such that $\Q$ is an orthomodular lattice and such that $\[\cdot\]$ satisfies equations \emph{(a)\ndash(c)} in the statement of Result~\ref{R3}. Then, the sequent $\phi_1, \ldots, \phi_n \Proves \psi_0$ is derivable in $NOM$.
\end{result}

To simplify this discussion in summary, let us assume that all experimentally decidable propositions may be modeled by closed subspaces of a Hilbert space. Then, the meaning that we ascribe to sequents is their literal empirical reading. The logic of such sequents is \emph{a~priori} the logic of the physical universe. Furthermore, the connectives $\And$, $\Implies$, and $\Not$ can be interpreted to obey the same primitive rules that they obey classically, and this uniquely determines their interpretation. In particular, $\Implies$ must be interpreted as the Sasaki arrow, and it does satisfy the deduction theorem. In this way, we may begin to address the first and second cited critiques of quantum logic.

The third critique that quantum logic has been useless is the strongest critique of the three, but it is also the least conclusive due to its empirical nature. Indeed, none of the cited deductive systems have yet been used to obtain new physical predictions or a new perspective on quantum phenomena, and our deductive system is no different in this regard. That may still change. Quantum phenomena are predominantly probabilistic in character, and we may reasonably expect that their account requires not a quantum logic, but a quantum probability theory. In its essence, this view predates quantum logic \cite{MurrayvonNeumann}. Classical probability theory may be formalized within classical logic by means of probability spaces, and by analogy, quantum probability theory might be formalized within quantum logic. It is unsurprising that such a formalization has not been obtained yet; substantial foundational development separates the specification of classical logic and the definition of probability spaces within that logic.

Nevertheless, this paper offers some evidence that quantum logic or, more accurately, orthomodular logic is both usable and applicable. By usablility, we mean the availability of intelligible derivations that reflect a coherent mode of reasoning as in the case of intuitionistic logic. To make this evidence explicit, we include derivations that might otherwise be left as exercises for the reader. In particular, we continue the development of the deductive system $NOM$ by deriving the primitive rules of inference for the defined connectives $\Or$ and $\Comp$:

\begin{result}[disjunction and compatibility]\label{R5}
Let $\phi \Or \psi$ be an abbreviation for $\Not (\Not \phi \And \Not \psi)$, and let $\phi \Comp \psi$ be an abbreviation for $(\phi \Implies (\psi \Implies \phi)) \And (\psi \Implies (\phi \Implies \psi))$. Then, the rules of inference in Figure~\ref{F2} are derivable in $NOM$.
\end{result}

\begin{figure}[h]

\begin{equation*}
\drv
{\Gamma \Proves \phi}
{\Gamma \Proves \phi \Or \psi}
\qquad\qquad
\drv
{\Gamma \Proves \psi}
{\Gamma \Proves \phi \Or \psi}
\end{equation*}

\begin{equation*}
\drv
{\Gamma \Proves \phi \Or \psi & \Gamma, \phi \Proves \chi & \Gamma, \psi \Proves \chi & \Gamma, \chi, \phi \Proves \chi & \Gamma, \chi, \psi \Proves \chi}
{\Gamma \Proves \chi}
\end{equation*}

\begin{equation*}
\drv
{\Gamma, \phi, \psi \Proves \phi & \Gamma, \psi, \phi \Proves \psi}
{\Gamma \Proves \phi \Comp \psi}
\end{equation*}

\begin{equation*}
\drv
{\Gamma \Proves \phi \Comp \psi & \Gamma, \phi, \psi \Proves \chi}
{\Gamma, \psi, \phi \Proves \chi}
\qquad\qquad
\drv
{\Gamma \Proves \phi \Comp \psi & \Gamma, \psi, \phi \Proves \chi}
{\Gamma, \phi, \psi \Proves \chi}
\end{equation*}
\caption{Primitive rules of inference for $\Or$ and $\Comp$.}\label{F2}
\end{figure}

We also exhibit a derivation that the compatibility connective $\Comp$ is equivalent to the commutator that was introduced by Marsden \cite{Marsden}. This notation is due to Takeuti \cite{Takeuti}*{sec.~1}.

\begin{result}[commutator]\label{R6}
The following rules of inference are derivable in $NOM$:
\begin{equation*}
\begin{aligned}
\drv
{\Gamma \Proves \phi \Comp \psi}
{\Gamma \Proves ((\phi \And \psi) \Or (\phi \And \Not \psi)) \Or ((\Not\phi \And \psi) \Or (\Not\phi \And \Not \psi))}
\end{aligned}\;,
\end{equation*}
\begin{equation*}
\begin{aligned}
\drv
{\Gamma \Proves ((\phi \And \psi) \Or (\phi \And \Not \psi)) \Or ((\Not\phi \And \psi) \Or (\Not\phi \And \Not \psi))}
{\Gamma \Proves \phi \Comp \psi}
\end{aligned}\;.
\end{equation*}
\end{result}

\noindent The primitive rules of inference given for $\Comp$ in Figure~\ref{F2} clearly imply that $\phi \Comp \psi$ is equivalent to $(\phi \Implies (\psi \Implies \phi)) \And (\psi \Implies (\phi \Implies \psi))$, and hence, Result~\ref{R6} is comparable to Result~\ref{R1} in that it motivates the common interpretation of a connective from the primitive inference rules for that connective.

To provide some evidence of the applicability of the deductive system $NOM$, we extend it to a deductive system for quantum predicate logic. We do so in two distinct ways because there are broadly two distinct notions of a quantum set. These two notions do not represent competing conceptions of quantum mathematics but rather distinct generalizations of sets within the same conception of quantum mathematics.

The term ``quantum set theory'' commonly refers to a generalization of the Boolean-valued models $V^{(\B)}$ in which the complete Boolean algebra $\B$ is replaced with a complete orthomodular lattice $\Q$ \cite{Takeuti}\cite{Ozawa}\cite{Ozawa4}\cite{Ozawa5}\cite{DoringEvaOzawa}. In the case of primary interest, this complete orthomodular lattice $\Q$ consists of the closed subspaces of a Hilbert space $\H$. By Gleason's theorem, the states of a physical system modeled by $\H$ are in one-to-one correspondence with countably additive measures on $\Q$, as long as the dimension of $\H$ is greater than two \cite{Gleason}. In this way, each state assigns a probability to each sentence of the language of set theory with constants from the orthomodular-valued model $V^{(\Q)}$. The canonical name of a generic ultrafilter behaves in some ways like a ``hidden variable'' of the physical system.

\begin{result}[Takeuti's semantics]\label{R7}
Let $\Q$ be a complete orthomodular lattice. Let $\[\cdot\]_\Q$ be Takeuti's interpretation of formulas in the language of set theory with constants from $V^{(\Q)}$ \cite{Takeuti}\cite{Ozawa5}. Let $NOM_\Q$ be the deductive system of single-sorted predicate logic whose rules of inference are those of Figure~\ref{F1} together with 
\begin{equation*}
\begin{aligned}
\drv
{\Gamma \Proves \phi[x/y]}
{\Gamma \Proves (\forall x)\phi}
\end{aligned}\;,
\qquad \qquad
\begin{aligned}
\drv
{\Gamma \Proves (\forall x)\phi}
{\Gamma \Proves \phi[x/t]}
\end{aligned}\;,
\end{equation*}
where the former rule is subject to the standard constraint that $y$ must not appear freely in $\Gamma \Proves (\forall x)\phi$. Then, for each closed formula $\psi$, if $\Proves \psi$ is derivable in $NOM_\Q$, then $\[\psi\]_\Q = \top$.
\end{result}

The most prominent connection between quantum set theory and quantum mechanics is the so-called Takeuti correspondence, which is a canonical bijection between the real numbers in $V^{(\Q)}$ and the real-valued observables on a physical system modeled by $\H$, when $\Q$ consists of the closed subspaces of $\H$ \cite{Takeuti}*{p.~321}\cite{Ozawa}*{Thm.~6.1}. In this context, Ozawa has analyzed the compatibility of real-valued observables from the perspective of quantum logic \cite{Ozawa2}\cite{Ozawa3}.
We do not treat bounded quantifiers because they are not expressible within the language of set theory in quantum set theory \cite{Takeuti}*{p.~315}. However, if we treat quantum sets extensionally rather than intentionally, then we may clearly treat bounded quantifiers as abbreviations in the obvious way.

The term ``quantum set'' may also refer to the discrete quantum spaces of noncommutative geometry \cite{quantumsets}\cite{PodlesWoronowicz}. A quantum set in this sense is essentially just a von Neumann algebra that may be noncommutative but that is otherwise very similar to the von Neumann algebra of all bounded complex-valued functions on a set. 
We show that $NOM$ extends to a natural deductive system that is sound for Weaver's quantum predicate logic \cite{Weaver}*{sec.~2.6} as it applies to these quantum sets, albeit only for nonduplicating formulas. A formula of first-order logic is defined to be \emph{nonduplicating} if no variable occurs more than once in any atomic subformula. This syntactic constraint reflects the absence of diagonal functions in noncommutative geometry \cite{Woronowicz} and the impossibility of broadcasting quantum states \cite{BarnumCavesFuchsJozsaSchumacher}.

\begin{result}[Weaver's semantics]\label{R8}
Let $\[\cdot\]_\qqq$ be the semantics that is defined in \cite{discretequantumstructures}. Let $NOM_\qqq$ be the deductive system of many-sorted predicate logic whose rules of inference are those of Figure~\ref{F1} together with the following:
\begin{enumerate}
\item $
\begin{aligned}
\drv
{\Gamma \Proves \phi[x/y]}
{\Gamma \Proves (\forall x)\phi}
\end{aligned}\;,$
where $y$ does not appear freely in $\Gamma \Proves (\forall x) \phi$,
\item
$
\begin{aligned}
\drv
{\Gamma \Proves (\forall x)\phi}
{\Gamma \Proves \phi[x/t]}
\end{aligned}\;,
$ where $\phi$ and $t$ have no free variables in common,
\item
$
\begin{aligned}
\drv
{\Gamma, \phi, \psi \Proves \chi}
{\Gamma, \psi, \phi \Proves \chi}
\end{aligned}\;,
$ where $\phi$ and $\psi$ have no free variables in common.
\end{enumerate}
Then, for each closed formula $\psi$, if $\Proves \psi$ is derivable in $NOM_\qqq$, then $\[\psi\]_\qqq = \top$.
\end{result}

Many classes of discrete quantum structures may be axiomatized within quantum predicate logic \cite{discretequantumstructures}*{sec.~1.3} including discrete quantum graphs, discrete quantum groups, and discrete quantum isomorphisms. Such discrete quantum structures are quantum sets equipped with suitably generalized relations and functions of various arity. The ubiquitous qualifier ``discrete'' addresses the fact that within noncommutative geometry many structures are implicitly topological or measurable. For example, quantum groups generalize locally compact groups \cite{KustermansVaes}, and quantum graphs generalize measurable graphs \cite{Weaver2}. These discrete quantum structures all originate in mathematical physics. Quantum graphs were motivated by quantum error correction \cite{DuanSeveriniWinter}, quantum groups were motivated by the quantization of physical symmetry \cite{PodlesWoronowicz}, and quantum graph isomorphisms were motivated by quantum nonlocality \cite{MancinskaRoberson}\cite{AtseriasMancinskaRobersonSamalSeveriniVarvitsiotis}. The general notion of a discrete quantum structure also traces its origins to the problem of quantum error correction \cite{KuperbergWeaver}\cite{Weaver2}\cite{quantumsets}\cite{discretequantumstructures}. With these examples in hand, we may begin to address the third cited critique of quantum logic that it has no meaningful connection to physics or to mathematics.

The deductive systems $NOM_\Q$ and $NOM_\qqq$ are incomparable, even if we restrict $NOM_\qqq$ to a single sort. This is to be expected because we are generalizing the set-theoretic universe in two very different ways. Intuitively, a Boolean-valued model is a classical space whose points are copies of the cumulative hierarchy, even if the Boolean algebra has no complete ultrafilters and thus the space has no points in the literal sense. An orthomodular-valued model is similarly a quantum space whose points are copies of the cumulative heirarchy, but its universe, i.e., its class of constants, is entirely \emph{classical}. Each member of the universe is a mathematical object, which may be named and duplicated. In contrast, the universe of a single-sorted structure in noncommutative geometry is a single discrete \emph{quantum} space, whose points are just a figure of speech. In short, intuitively, $NOM_\Q$ is the logic of a quantum space of classical structures, whereas $NOM_\qqq$ is the logic of a classical space of quantum structures. For this reason, the closed formulas of $NOM_\qqq$ satisfy classical logic; this is reflected in rule (3) of Result~\ref{R8}. One convenient consequence of this principle is that a theorem that follows from a sequence of axioms may be asserted after any sequence of additional assumptions, because the theorem is a closed formula and it therefore commutes with each of the assumptions.

This paper does not include a completeness theorem that is converse to Result~\ref{R7} or Result~\ref{R8}. A variant of Titani's completeness theorem \cite{Titani}*{Thm.~28} might be provable, but it uses the axioms of quantum set theory in an essential way. Takeuti's quantum set theory is undergoing a reformulation \cite{Ozawa5}, so recording such a completeness theorem may be premature. A general completeness result for $NOM_\Q$ appears to be out of reach, because it is unknown whether every orthomodular lattice has a completion \cite{Harding}*{sec.~6}. A completeness result for $NOM_\qqq$ appears to be even further out of reach because orthomodular logic is far from being complete for finite-dimensional Hilbert spaces. A symmetric monoidal category of orthomodular quantum sets \cite{Rump} along the lines of \cite{quantumsets} might bridge that gap.

Finally, we define the existential quantifier and derive its primitive rules of inference:

\begin{result}[existential quantification]\label{R9}
Let $(\exists x)\phi$ be an abbrevation for $\Not(\forall x)\Not \phi$. Then, the following rules of inference are derivable in $NOM_\Q$ and $NOM_\qqq$:
\begin{equation*}
\begin{aligned}
\drv
{\Gamma \Proves \phi[x/t]}
{\Gamma \Proves (\exists x)\phi}
\end{aligned}\;,
\qquad \qquad
\begin{aligned}
\drv
{\Gamma \Proves (\exists x)\phi & \Gamma, \phi[x/y] \Proves \psi & \Gamma, \psi, \phi[x/y] \Proves \psi }
{\Gamma \Proves \psi}
\end{aligned}\;,
\end{equation*}
where the latter rule is subject to the standard constraint that $y$ must not appear freely in $\Gamma \Proves (\exists x)\phi$ or in $\psi$. In the case of $NOM_\qqq$, the former rule is also subject to the constraint that $t$ and $\phi$ must have no free variables in common.
\end{result}

\noindent We offer an intuitive justification of the new premise in the $\exists$-elimination rule. The assumption $\phi[x/y]$ may be regarded as creating an element $y$ that satisfies $\phi$. If $\Gamma \Proves (\exists x) \phi$ is derivable, then we may create such an element consistently: if $\Gamma \Proves (\exists x) \phi$ and $\Gamma, \phi[x/y] \Proves \psi \And \Not \psi$ are both derivable, then so is $\Gamma \Proves \psi \And \Not \psi$. However, the formula $\psi$ may be a consequence of creating $y$ and not a consequence of the possibility of creating $y$. In this sense, the existential quantifier expresses a potential rather than an actual existence. Modulo $\Gamma, \phi[x/y] \Proves \psi$, the sequent $\Gamma, \psi, \phi[x/y] \Proves \psi$ expresses that $\psi$ is compatible with the creation of $y$.

The first deductive system for quantum predicate logic was introduced by Dishkant \cite{Dishkant}. His inference rules for the existential quantifier,
\begin{equation*}
\begin{aligned}
\drv
{\phantom{\Proves \phi[x/y] \Implies_D \psi}}
{\Proves \phi[x/y] \Implies_D (\exists x) \phi  }
\end{aligned}\;,
\qquad \qquad
\begin{aligned}
\drv
{\Proves \phi[x/y] \Implies_D \psi}
{\Proves (\exists x) \phi \Implies_D \psi},
\end{aligned}\
\end{equation*}
where $\phi \Implies_D \psi$ abbreviates $\Not \psi \Implies \Not \phi$, are easily derived in $NOM_\Q$ by using the primitive rules for the universal quantifier. An equivalent deductive system was then investigated by Dunn, who showed that within quantum predicate logic, the axioms of Peano arithmetic imply those of classical logic \cite{Dunn}. More recently, Titani has formulated a sequent calculus for quantum predicate logic \cite{Titani}*{sec.~3.2}, which use a modally closed implication connective that satisfies a restricted deduction theorem.

We close this introduction with the remark that although $NOM$ is presented as a calculus of sequents, it is in substance a natural deduction system rather than a sequent calculus if we use Gentzen's systems $NK$ and $LK$ as a reference \cite{Gentzen}. We argue that the characteristic features of a natural deduction system are that its elimination rules never place a logical symbol into the antecedent and that each sequent has exactly one formula in its succedent. A new feature of $NOM$ relative to $NK$ is that some rules of inference allow the inference of a formula only after making an additional assumption, e.g., the $\Implies$-elimination rule. However, we argue that in the context of a noncommutative deductive system, this rule is \emph{more} faithful to the intuitive meaning of the implication connective than modus ponens. In $NOM$, an implication $\phi \Implies \psi$ expresses that you may assert $\psi$ after assuming $\phi$ and nothing more than that. Nevertheless, modus ponens is derivable in $NOM$; see Proposition~\ref{2.1}.

\subsection*{Conventions}

Top to bottom and left to right in Figure~\ref{F1}, the inference rules of $NOM$ are called the assumption rule, the cut rule, the paste rule, the compatible exchange rule, the $\And$-introduction rule, the left $\And$-elimination rule, the right $\And$-elimination rule, the $\Implies$-introduction rule, the $\Implies$-elimination rule, the excluded middle rule, and the deductive explosion rule. The unqualified terms ``formula'', ``sequent'', and ``derivation'' refer to the formulas, sequents, and derivations of $NOM$. Hilbert spaces are over the complex numbers throughout.

\section{Conjunction, implication, and negation}\label{S2}

This section exhibits a number of derivations, beginning with a few simple derivations of some familiar rules of inference and ending with two derivations that characterize the implication connective as the Sasaki arrow.

\begin{proposition}\label{2.1}
The following inference rule is derivable:
\begin{equation*}\begin{aligned}
\drv
{\Gamma \Proves \phi & \Gamma \Proves \phi \Implies \psi}
{\Gamma \Proves \psi}
\end{aligned}\,.
\end{equation*}
\end{proposition}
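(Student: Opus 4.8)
The plan is to derive modus ponens in just two steps, using only the $\Implies$-elimination rule and the cut rule. First I would apply the $\Implies$-elimination rule to the right-hand premise $\Gamma \Proves \phi \Implies \psi$; this immediately yields the sequent $\Gamma, \phi \Proves \psi$. Then I would apply the cut rule to the left-hand premise $\Gamma \Proves \phi$ together with the sequent $\Gamma, \phi \Proves \psi$ just obtained; since the cut rule sends $\Gamma \Proves \phi$ and $\Gamma, \phi \Proves \psi$ to $\Gamma \Proves \psi$, this completes the derivation. In tree form:
\begin{equation*}
\drv
{\Gamma \Proves \phi & \drv{\Gamma \Proves \phi \Implies \psi}{\Gamma, \phi \Proves \psi}}
{\Gamma \Proves \psi}\,.
\end{equation*}

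There is no real obstacle here. The proposition is essentially a confirmation that $\Implies$-elimination and cut compose to recover the familiar rule, and it substantiates the remark in the introduction that, although the $\Implies$-elimination rule of $NOM$ is weaker than modus ponens taken in isolation, modus ponens itself remains derivable once the cut rule is available. The only point requiring care is that the context $\Gamma$ must match exactly in both applications, which it does since neither the $\Implies$-elimination rule nor the cut rule alters $\Gamma$; in particular no exchange or weakening is needed, so the derivation goes through in $NOM$ despite the failure of the exchange rule.
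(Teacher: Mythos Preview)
Your proof is correct and matches the paper's own proof exactly: apply $\Implies$-elimination to obtain $\Gamma, \phi \Proves \psi$, then cut against $\Gamma \Proves \phi$.
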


\begin{proof}
\begin{equation*}\begin{aligned}
\drv
{\Gamma \Proves \phi & \drv{\Gamma \Proves \phi \Implies \psi}{\Gamma, \phi \Proves \psi}}
{\Gamma \Proves \psi.}
\end{aligned}\
\end{equation*}
\end{proof}

\begin{proposition}\label{2.2}
The following inference rule is derivable:
\begin{equation*}\begin{aligned}
\drv
{\Gamma \Proves \phi & \Gamma \Proves \Not \phi}
{\Gamma \Proves \psi}
\end{aligned}\,.
\end{equation*}
\end{proposition}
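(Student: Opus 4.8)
The plan is to combine the deductive explosion rule with the cut rule, and that is essentially the whole argument. The deductive explosion rule lets us pass from a proof of $\Gamma \Proves \Not\phi$ to a proof of $\Gamma, \phi \Proves \psi$ for the arbitrary target formula $\psi$; so the second hypothesis of the rule we are deriving immediately yields $\Gamma, \phi \Proves \psi$.

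Next I would feed this together with the first hypothesis $\Gamma \Proves \phi$ into the cut rule
\begin{equation*}
\drv{\Gamma \Proves \phi & \Gamma, \phi \Proves \psi}{\Gamma \Proves \psi}\,,
\end{equation*}
which produces the desired sequent $\Gamma \Proves \psi$. Concretely, the derivation is
\begin{equation*}\begin{aligned}
\drv
{\Gamma \Proves \phi & \drv{\Gamma \Proves \Not\phi}{\Gamma, \phi \Proves \psi}}
{\Gamma \Proves \psi\,.}
\end{aligned}\end{equation*}

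There is no real obstacle here: the only thing to notice is that deductive explosion already delivers $\psi$ in the succedent for any $\psi$ whatsoever, so no further manipulation of $\psi$ is needed, and the single remaining step is an application of cut to discharge the auxiliary assumption $\phi$. (Note that this mirrors, and does not require, the modus-ponens derivation of Proposition~\ref{2.1}; both are short consequences of cut together with an elimination-style rule.)
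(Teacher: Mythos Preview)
Your derivation is correct. In fact it is shorter than the paper's own proof: you apply the deductive explosion rule directly to the hypothesis $\Gamma \Proves \Not\phi$ to obtain $\Gamma,\phi \Proves \psi$, and then a single cut with $\Gamma \Proves \phi$ finishes. The paper instead routes through $\Gamma,\Not\phi \Proves \psi$: it uses the paste rule to get $\Gamma,\Not\phi \Proves \phi$ from the two hypotheses, applies deductive explosion to the assumption axiom $\Gamma,\Not\phi \Proves \Not\phi$ to obtain $\Gamma,\Not\phi,\phi \Proves \psi$, and then needs two cuts. Both arguments hinge on the same two primitive rules (deductive explosion and cut), but your ordering avoids the paste step and one of the cuts entirely.
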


\begin{proof}
\begin{equation*}
\drv{
\Gamma \Proves \Not \phi &
\drv{
\drv{
\Gamma \Proves \Not \phi & \Gamma \Proves \phi}{
\Gamma, \Not \phi \Proves \phi}
&
\drv
{\drv
{}
{\Gamma, \Not \phi \Proves \Not \phi}}
{\Gamma, \Not \phi, \phi \Proves \psi}}{
\Gamma, \Not \phi \Proves \psi}}{
\Gamma\Proves \psi.}
\end{equation*}
\end{proof}

\begin{lemma}\label{2.3}
The following sequents are derivable:
\begin{enumerate}
\item $\Gamma, \Not \phi, \phi \Proves \psi$,
\item $\Gamma, \Not \Not \phi \Proves \phi$,
\item $\Gamma, \phi, \Not \phi \Proves \psi$,
\item $\Gamma, \phi \Proves \Not \Not \phi$.
\end{enumerate}
\end{lemma}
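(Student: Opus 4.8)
The plan is to dispose of parts (1) and (2) quickly, then reduce the remaining two parts to a single sequent, and finally concentrate on that sequent. For (1): the assumption rule gives $\Gamma, \Not\phi \Proves \Not\phi$, and applying the deductive explosion rule to it — with $\phi$ as the formula inserted into the antecedent, so that $\Gamma, \Not\phi \Proves \Not\phi$ has the shape $\Gamma' \Proves \Not\phi'$ demanded by the rule — yields $\Gamma, \Not\phi, \phi \Proves \psi$. For (2): I would apply the excluded middle rule to the formula $\phi$ over the antecedent $\Gamma, \Not\Not\phi$; its first premise $\Gamma, \Not\Not\phi, \phi \Proves \phi$ is the assumption rule, and its second premise $\Gamma, \Not\Not\phi, \Not\phi \Proves \phi$ follows from the assumption rule $\Gamma, \Not\Not\phi \Proves \Not\Not\phi$ by deductive explosion, reading $\Not\Not\phi$ as $\Not(\Not\phi)$ so that the inserted formula is $\Not\phi$.

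Parts (3) and (4) carry the content and are interderivable. From (4) one gets (3) by a single deductive explosion — from $\Gamma, \phi \Proves \Not\Not\phi$, read as $\Gamma, \phi \Proves \Not(\Not\phi)$, infer $\Gamma, \phi, \Not\phi \Proves \psi$. From (3) one gets (4) by the excluded middle rule applied to $\Not\phi$ over $\Gamma, \phi$: the premise $\Gamma, \phi, \Not\Not\phi \Proves \Not\Not\phi$ is the assumption rule, and the premise $\Gamma, \phi, \Not\phi \Proves \Not\Not\phi$ is an instance of (3). Furthermore, Proposition~\ref{2.2} applied to the antecedent $\Gamma, \phi, \Not\phi$, together with the assumption rule $\Gamma, \phi, \Not\phi \Proves \Not\phi$, reduces (3) to the single sequent $\Gamma, \phi, \Not\phi \Proves \phi$. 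Thus everything comes down to deriving this one sequent.

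Deriving $\Gamma, \phi, \Not\phi \Proves \phi$ is the heart of the matter, and I expect it to be the main obstacle. The difficulty is structural: $NOM$ has no negation-introduction rule, so the double negation in (4) cannot be assembled but must be extracted from the trivial sequent $\Gamma, \phi \Proves \phi$ using only the excluded middle and deductive explosion rules; and since the exchange rule is unavailable while $\phi$ precedes $\Not\phi$ in the antecedent, the direct attempts loop — each application of deductive explosion or of the compatible exchange rule that one would want in order to conclude $\Gamma, \phi, \Not\phi \Proves \phi$ has a premise equivalent to that very sequent. My plan is to split repeatedly with the excluded middle rule, on $\phi$ or on $\Not\Not\phi$, so arranged that one branch of each split is an instance of part (1) or part (2) — whose antecedents end in $\ldots,\Not\chi,\chi$, respectively in $\ldots,\Not\Not\chi$ — while the other branch has an antecedent still ending in a repeated negation; I would collapse that repetition with the admissible contraction rule, which passes from $\Gamma',\chi,\chi \Proves \theta$ to $\Gamma',\chi \Proves \theta$ by the cut rule against the assumption rule $\Gamma',\chi \Proves \chi$, used in concert with the compatible exchange rule and part (2) to absorb the leftover negations. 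Making this process terminate is the delicate step; if it resists, a cleaner route may be induction on the structure of $\phi$, with $\phi = \Not\phi_0$ handled via part (2) and the cases $\phi = \phi_0 \And \phi_1$, $\phi = \phi_0 \Implies \phi_1$ reduced to subformulas. Once $\Gamma, \phi, \Not\phi \Proves \phi$ is derived, (3) follows from Proposition~\ref{2.2} and (4) follows as above.
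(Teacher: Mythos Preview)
Your treatment of parts (1) and (2) is correct and identical to the paper's. You are also right that (3) and (4) are interderivable in the way you describe, and that (3) follows from $\Gamma,\phi,\Not\phi\Proves\phi$ via Proposition~\ref{2.2}. The gap is that you never derive this last sequent: you offer only heuristics (iterated excluded-middle splits with contraction, or structural induction on $\phi$), and you yourself flag termination as ``the delicate step.'' Neither heuristic is viable as stated. Splitting on $\phi$ or $\Not\Not\phi$ over the context $\Gamma,\phi,\Not\phi$ reproduces premises at least as hard as the goal, since the problematic order $\phi,\Not\phi$ persists at the tail of every branch; contraction only removes adjacent repetitions and does not reorder. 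Structural induction fails already at the base: for a propositional letter $p$ there is no substructure to recurse on, and for $\phi=\Not\phi_0$ the target $\Gamma,\Not\phi_0,\Not\Not\phi_0\Proves\Not\phi_0$ is not an instance of (1) or (2) because the order is wrong.

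The idea you are missing is to use $\Implies$ as a packaging device to pull the offending formulas out of the antecedent. The paper proves (3) directly as follows: from (1) one gets $\Gamma,\Not\phi\Proves\phi\Implies(\Not\phi\Implies\psi)$ by two $\Implies$-introductions; from (2) together with (1) and the paste rule one gets $\Gamma,\Not\Not\phi\Proves\phi\Implies(\Not\phi\Implies\psi)$; excluded middle on $\Not\phi$ then yields $\Gamma\Proves\phi\Implies(\Not\phi\Implies\psi)$, and two $\Implies$-eliminations give $\Gamma,\phi,\Not\phi\Proves\psi$. The point is that once the sequent is internalized as a nested implication in the succedent, the excluded-middle split happens over the bare context $\Gamma$, where the branches $\Gamma,\Not\phi$ and $\Gamma,\Not\Not\phi$ are exactly of the form handled by (1) and (2). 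Part (4) then follows from (3) by the excluded-middle argument you gave.
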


\begin{proof}
\begin{equation*}
\begin{aligned}
\drv
{\drv{}{\Gamma , \Not \phi \Proves \Not \phi}}
{\Gamma, \Not \phi, \phi \Proves \psi;}
\end{aligned}
\qquad \qquad
\begin{aligned}
\drv{
\drv{}{
\Gamma, \Not \Not \phi, \phi \Proves  \phi}
&
\drv{}{
\Gamma, \Not \Not \phi,\Not \phi \Proves \phi}}{
\Gamma, \Not \Not \phi \Proves \phi.}
\end{aligned}
\end{equation*}

\quad

\begin{equation*}
\begin{aligned}
\drv{
\drv{
\drv{
\drv{
\drv{}{\Gamma, \Not \phi, \phi \Proves \Not \phi \Implies \psi}}
{\Gamma, \Not \phi \Proves \phi \Implies(\Not \phi \Implies \psi)}
&
\drv
{\drv
{\drv
{}
{\Gamma, \Not \Not \phi \Proves \phi}
&\drv
{\drv
{}
{\Gamma, \Not \Not \phi, \Not \phi \Proves \psi}}
{\Gamma, \Not \Not \phi \Proves \Not \phi \Implies \psi}}
{\Gamma, \Not \Not \phi, \phi \Proves \Not \phi \Implies \psi}}
{\Gamma, \Not \Not \phi \Proves \phi \Implies(\Not \phi \Implies \psi)}}
{\Gamma \Proves \phi \Implies(\Not \phi \Implies \psi)}}
{\Gamma, \phi \Proves\Not \phi \Implies \psi}}
{\Gamma, \phi, \Not \phi \Proves \psi.}
\end{aligned}
\end{equation*}

\quad

\begin{equation*}
\begin{aligned}
\drv{
\drv{}{
\Gamma, \phi, \Not \phi \Proves \Not \Not \phi}
&
\drv{}{
\Gamma, \phi, \Not \Not \phi \Proves \Not \Not \phi}}{
\Gamma, \phi \Proves \Not \Not \phi.}
\end{aligned}
\end{equation*}
\end{proof}

\begin{proposition}\label{2.4}
The following inference rules are derivable:
\begin{equation*}
\begin{aligned}
\drv
{\Gamma\Proves \phi}
{\Gamma \Proves \Not \Not \phi}
\end{aligned}\;, \qquad \qquad
\begin{aligned}
\drv
{\Gamma \Proves \Not \Not \phi}
{\Gamma \Proves \phi}
\end{aligned}\;,
\end{equation*}

\begin{equation*}
\begin{aligned}
\drv
{\Gamma, \phi \Proves \psi & \Gamma, \phi \Proves \Not \psi}
{\Gamma  \Proves \Not \phi}
\end{aligned}\;, \qquad \qquad
\begin{aligned}
\drv
{\Gamma, \Not \phi \Proves \psi & \Gamma, \Not \phi \Proves \Not \psi}
{\Gamma  \Proves \phi}
\end{aligned}\;,
\end{equation*}

\begin{equation*}
\begin{aligned}
\drv
{\Gamma, \phi \Proves \Not \phi}
{\Gamma  \Proves \Not \phi}
\end{aligned}\;, \qquad \qquad
\begin{aligned}
\drv
{\Gamma, \Not \phi \Proves \phi}
{\Gamma  \Proves \phi}
\end{aligned}\;.
\end{equation*}
\end{proposition}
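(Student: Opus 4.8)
The plan is to handle the six rules in a dependency order, deriving the simplest ones first and reusing them, since each turns out to be a one- or two-line derivation. I would start with the two double-negation rules. For the left rule, from the hypothesis $\Gamma \Proves \phi$ and the instance $\Gamma, \phi \Proves \Not\Not\phi$ of Lemma~\ref{2.3}(4), a single application of the cut rule gives $\Gamma \Proves \Not\Not\phi$. For the right rule, from the hypothesis $\Gamma \Proves \Not\Not\phi$ and the instance $\Gamma, \Not\Not\phi \Proves \phi$ of Lemma~\ref{2.3}(2), a single cut gives $\Gamma \Proves \phi$.

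Next I would derive the two one-premise rules in the bottom row, which are immediate consequences of the excluded-middle rule together with the assumption rule. For the rule with conclusion $\Gamma \Proves \Not\phi$, apply excluded middle to the formula $\phi$: one premise, $\Gamma, \phi \Proves \Not\phi$, is exactly the hypothesis of the rule, and the other, $\Gamma, \Not\phi \Proves \Not\phi$, is an instance of the assumption rule. For the rule with conclusion $\Gamma \Proves \phi$, apply excluded middle to $\phi$ again: now $\Gamma, \phi \Proves \phi$ is the assumption rule and $\Gamma, \Not\phi \Proves \phi$ is the hypothesis.

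Finally, I would obtain the two two-premise reductio rules in the middle row by combining Proposition~\ref{2.2} with the one-premise rules just established. From $\Gamma, \phi \Proves \psi$ and $\Gamma, \phi \Proves \Not\psi$, Proposition~\ref{2.2} applied in the context $\Gamma, \phi$ yields $\Gamma, \phi \Proves \Not\phi$ (choosing $\Not\phi$ as the arbitrary conclusion), and then the first bottom-row rule gives $\Gamma \Proves \Not\phi$. Symmetrically, from $\Gamma, \Not\phi \Proves \psi$ and $\Gamma, \Not\phi \Proves \Not\psi$, Proposition~\ref{2.2} in the context $\Gamma, \Not\phi$ yields $\Gamma, \Not\phi \Proves \phi$, and the second bottom-row rule gives $\Gamma \Proves \phi$.

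I do not expect a genuine obstacle: every step is a single use of a primitive rule of Figure~\ref{F1} or of an already-derived rule, and the noncommutativity of $NOM$ never intervenes because in each case the extra assumption ($\phi$ or $\Not\phi$) is appended at the right-hand end of the context, precisely where the assumption, cut, and excluded-middle rules operate. The only point requiring care is bookkeeping: ensuring that excluded middle is applied to $\phi$ rather than to $\psi$, and that the instances of Lemma~\ref{2.3} and Proposition~\ref{2.2} are taken with the intended context and with the intended choice of the "arbitrary" formula.
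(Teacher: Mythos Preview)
Your proposal is correct and uses the same ingredients as the paper's proof: Lemma~\ref{2.3} plus cut for the double-negation rules, and Proposition~\ref{2.2} together with excluded middle and the assumption rule for the remaining four. The only difference is the dependency order: you derive the one-premise bottom-row rules directly from excluded middle and then obtain the two-premise reductio rules from them via Proposition~\ref{2.2}, whereas the paper does the reverse, first establishing the two-premise reductio rules (the second one via the first and double-negation elimination) and then reading off the bottom-row rules as the special case $\psi=\phi$. Both orderings are equally short and neither offers a real advantage over the other.
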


\begin{proof}
Refer to Proposition~\ref{2.2} and Lemma~\ref{2.3}.

\begin{equation*}
\drv
{\Gamma \Proves \phi
&\drv
{}
{\Gamma, \phi \Proves \Not \Not \phi}}
{\Gamma \Proves \Not \Not \phi;}
\qquad \qquad
\drv
{\Gamma \Proves \Not \Not  \phi
&\drv
{}
{\Gamma, \Not \Not \phi \Proves \phi}}
{\Gamma \Proves \phi.}
\end{equation*}

\begin{equation*}
\drv
{
\drv
{\Gamma, \phi \Proves \psi & \Gamma, \phi \Proves \Not \psi}
{\Gamma, \phi \Proves \Not \phi}
&
\drv{}
{\Gamma, \Not \phi \Proves \Not \phi}
}
{\Gamma \Proves \Not \phi;}
\qquad \qquad
\drv
{
\drv
{\Gamma, \Not \phi \Proves \psi & \Gamma, \Not \phi \Proves \Not \psi}
{\Gamma \Proves \Not \Not \phi}
}
{\Gamma  \Proves \phi.}
\end{equation*}

\begin{equation*}
\drv
{\drv{}{\Gamma,  \phi  \Proves  \phi} & \Gamma ,\phi \Proves \Not \phi}
{\Gamma  \Proves \Not \phi;}
\qquad \qquad
\drv
{\Gamma,  \Not \phi  \Proves  \phi & \drv{}{\Gamma,\Not\phi \Proves \Not \phi}}
{\Gamma  \Proves \phi.}
\end{equation*}
\end{proof}

\begin{lemma}\label{2.5}
The following inference rules are derivable:
\begin{equation*}
\begin{aligned}
\drv
{\Gamma, \phi \Proves \psi}
{\Gamma, \phi, \phi \Proves \psi}
\end{aligned}\;, \qquad \qquad
\begin{aligned}
\drv
{\Gamma, \phi, \phi\Proves \psi}
{\Gamma, \phi\Proves \psi}
\end{aligned}\;,
\end{equation*}
\begin{equation*}
\begin{aligned}
\drv
{\Gamma, \phi\Proves \psi}
{\Gamma, \Not \Not \phi  \Proves \psi}
\end{aligned}\;, \qquad \qquad
\begin{aligned}
\drv
{\Gamma, \Not \Not \phi \Proves \psi}
{\Gamma, \phi \Proves \psi}
\end{aligned}\;.
\end{equation*}
\end{lemma}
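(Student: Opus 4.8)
The four rules split into two that are immediate and two that lean on Lemma~\ref{2.3}. The first rule is a one-line derivation from the paste rule: working in the context $\Gamma, \phi$, the assumption rule gives $\Gamma, \phi \Proves \phi$, and pasting this against the hypothesis $\Gamma, \phi \Proves \psi$ yields $\Gamma, \phi, \phi \Proves \psi$. The second rule is dual: cut the hypothesis $\Gamma, \phi, \phi \Proves \psi$ against $\Gamma, \phi \Proves \phi$, again an instance of the assumption rule, to obtain $\Gamma, \phi \Proves \psi$.

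For the third rule the plan is to interpose $\Not\Not\phi$ right before $\phi$ in the antecedent, swap the two, and then delete $\phi$. Concretely: paste $\Gamma, \phi \Proves \Not\Not\phi$, which is Lemma~\ref{2.3}(4), against the hypothesis $\Gamma, \phi \Proves \psi$ to get $\Gamma, \phi, \Not\Not\phi \Proves \psi$; apply the compatible exchange rule to obtain $\Gamma, \Not\Not\phi, \phi \Proves \psi$; and then cut this against $\Gamma, \Not\Not\phi \Proves \phi$, which is Lemma~\ref{2.3}(2), to reach $\Gamma, \Not\Not\phi \Proves \psi$. The fourth rule is handled symmetrically, interchanging the roles of $\phi$ and $\Not\Not\phi$: paste $\Gamma, \Not\Not\phi \Proves \phi$ (Lemma~\ref{2.3}(2)) against the hypothesis $\Gamma, \Not\Not\phi \Proves \psi$ to get $\Gamma, \Not\Not\phi, \phi \Proves \psi$, exchange to get $\Gamma, \phi, \Not\Not\phi \Proves \psi$, and cut against $\Gamma, \phi \Proves \Not\Not\phi$ (Lemma~\ref{2.3}(4)) to reach $\Gamma, \phi \Proves \psi$.

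The step that requires care is the application of the compatible exchange rule, since besides the sequent being permuted it also requires the two auxiliary premises $\Gamma, \phi, \Not\Not\phi \Proves \phi$ and $\Gamma, \Not\Not\phi, \phi \Proves \Not\Not\phi$ — and for the fourth rule the same pair, with their roles in the rule swapped. The first of these is just Lemma~\ref{2.3}(2) with the context taken to be $\Gamma, \phi$. The second is not an instance of the assumption rule, because $\Not\Not\phi$ is not in the last antecedent position, but it follows by pasting $\Gamma, \Not\Not\phi \Proves \phi$ (Lemma~\ref{2.3}(2)) against $\Gamma, \Not\Not\phi \Proves \Not\Not\phi$ (the assumption rule). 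Once these side conditions are discharged, both applications of the compatible exchange rule are legitimate, the concluding cuts close off the derivations, and writing out the four derivation trees in full is then routine.
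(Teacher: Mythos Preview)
Your proposal is correct and follows essentially the same route as the paper: the first two rules are immediate from paste and cut against the assumption $\Gamma,\phi\Proves\phi$, and the last two use paste (via Lemma~\ref{2.3}), then the compatible exchange rule with the two side premises $\Gamma,\phi,\Not\Not\phi\Proves\phi$ and $\Gamma,\Not\Not\phi,\phi\Proves\Not\Not\phi$, then cut. The only cosmetic difference is that the paper obtains the second side premise directly from Lemma~\ref{2.3}(4) with context $\Gamma,\Not\Not\phi$, whereas you obtain it by paste from Lemma~\ref{2.3}(2) and the assumption rule; both are equally short.
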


\begin{proof}
Refer to Lemma~\ref{2.3}.

\begin{equation*}
\drv
{\drv{}{\Gamma, \phi \Proves \phi} & \Gamma, \phi\Proves \psi}
{\Gamma, \phi, \phi\Proves \psi;}
\qquad \qquad
\drv
{\drv{}{\Gamma, \phi \Proves \phi} & \Gamma, \phi, \phi \Proves \psi}
{\Gamma, \phi \Proves \psi.}
\end{equation*}

\begin{equation*}
\drv
{\drv
{}
{\Gamma, \Not \Not \phi \Proves \phi}
&\drv
{\drv
{}
{\Gamma, \phi, \Not \Not \phi \Proves \phi}
&\drv
{\drv
{}
{\Gamma, \phi \Proves \Not \Not \phi}
&\Gamma, \phi \Proves \psi}
{\Gamma, \phi, \Not \Not \phi \Proves \psi}
&\drv
{}
{\Gamma, \Not \Not \phi , \phi \Proves \Not \Not \phi}}
{\Gamma, \Not \Not \phi, \phi \Proves \psi}}
{\Gamma, \Not \Not \phi \Proves \psi;}
\end{equation*}
\begin{equation*}
\drv
{\drv
{}
{\Gamma, \phi \Proves \Not \Not \phi}
&\drv
{\drv
{}
{\Gamma,  \Not \Not \phi, \phi \Proves  \Not \Not \phi}
&\drv
{\drv
{}
{\Gamma,  \Not \Not \phi \Proves \phi}
&\Gamma,  \Not \Not \phi \Proves \psi}
{\Gamma,  \Not \Not  \phi,\phi \Proves \psi}
&\drv
{}
{\Gamma, \phi ,  \Not \Not \phi \Proves\phi}}
{\Gamma, \phi,  \Not \Not \phi \Proves \psi}}
{\Gamma, \phi \Proves \psi.}
\end{equation*}
\end{proof}

\begin{theorem}\label{2.6}
The inference rules in Figure \ref{F3} are derivable.
\end{theorem}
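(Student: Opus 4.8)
Theorem~\ref{2.6} asserts that each of the inference rules drawn in Figure~\ref{F3} is a derived rule of $NOM$, so the plan is to exhibit, one rule at a time, an explicit derivation tree, in the style already used for Propositions~\ref{2.1}--\ref{2.4} and Lemmas~\ref{2.3} and~\ref{2.5}. The building blocks are the eleven primitive rules of Figure~\ref{F1} together with everything proved so far: modus ponens and \emph{ex contradictione quodlibet} (Propositions~\ref{2.1} and~\ref{2.2}); the two double-negation rules and the four indirect rules of Proposition~\ref{2.4}; the four antecedent sequents of Lemma~\ref{2.3}; and the antecedent contraction and antecedent double-negation rewritings of Lemma~\ref{2.5}. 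The moves that recur are: the two $\Implies$-rules used as the deduction theorem, to pass a formula between the antecedent and the succedent; the excluded-middle rule, to split a derivation into a $\phi$-branch and a $\Not\phi$-branch, closing the impossible branch by \emph{ex falso}; the cut and paste rules, to discharge an intermediate sequent; and the compatible exchange rule, which is the only means of interchanging two adjacent antecedent formulas.

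I expect the rules of Figure~\ref{F3} to split into two groups. Those that rearrange only the succedent, or that follow from the structural rules already in hand --- front weakening and contraction --- have short derivations that look entirely classical: they combine $\And$-introduction and the two $\And$-eliminations with cut and the assumption rule, without moving any hypothesis. The remaining rules list the antecedent formulas of their premises and of their conclusion in different orders, and for these the only available instrument is the compatible exchange rule. To interchange adjacent formulas $\sigma$ and $\tau$ in a context $\Gamma$, that rule demands, alongside the premise $\Gamma, \sigma, \tau \Proves \chi$, the two ``survival'' sequents $\Gamma, \sigma, \tau \Proves \sigma$ and $\Gamma, \tau, \sigma \Proves \tau$. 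Here the paste rule does most of the work: it yields $\Gamma, \sigma, \tau \Proves \sigma$ from $\Gamma, \sigma \Proves \tau$ and the assumption rule, and symmetrically $\Gamma, \tau, \sigma \Proves \tau$ from $\Gamma, \tau \Proves \sigma$, so that an interchange of $\sigma$ and $\tau$ reduces to exhibiting them as equivalent in the context $\Gamma$ --- which is exactly what the side conditions attached to the figure's rules will supply, possibly after a preliminary antecedent $\Not\Not$-rewriting via Lemma~\ref{2.5} to give the retained formula the syntactic shape needed downstream. Only then would I apply compatible exchange, and finish the derivation with whatever cuts and pastes reassemble the conclusion.

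The main obstacle is precisely this noncommutativity. Unlike in Gentzen's $NK$, hypotheses may not be permuted for free, so each reordering rule of Figure~\ref{F3} forces a careful accounting of which adjacent swaps it decomposes into, which contextual equivalences feed the paste rule for each swap, and --- for any rule carrying no side condition --- a direct argument that the reordering is nonetheless valid; several rules that are classically immediate hold here only under the precise side conditions the figure imposes. It helps to let the semantics guide the search: a rule of Figure~\ref{F3} is derivable only if it is sound for the orthomodular-lattice interpretation of sequents (Result~\ref{R3}), and because the Sasaki projection $\Andthen$ is monotone in its first argument but \emph{not} in its second, a short computation with $\Andthen$ reveals which form of a candidate rule can possibly hold and hence which survival premises one should be aiming to establish. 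Once the correct sequence of compatible-exchange, cut, and paste applications has been found, writing out the corresponding derivation tree is mechanical, and the proof consists of one such tree for each rule of Figure~\ref{F3}.
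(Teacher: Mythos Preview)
Your plan misses the single organizing idea behind Theorem~\ref{2.6}. Every rule in Figure~\ref{F3} has exactly the same shape: it is one of the primitive rules of Figure~\ref{F1}, or one of the sequents/rules already established in Lemmas~\ref{2.3} and~\ref{2.5}, with an additional trailing segment $\Delta$ inserted to the right of the ``active'' formulas in every antecedent. The paper's proof exploits this uniformly: when $\Delta$ is empty, each rule is already in hand; for the inductive step, if $\Delta = \Delta_0, \delta'$, apply $\Implies$-introduction to every premise to move $\delta'$ from the end of the antecedent into the succedent (so that the succedent becomes $\delta' \Implies \psi$), invoke the rule for the shorter $\Delta_0$ given by the induction hypothesis, and then apply $\Implies$-elimination once to pull $\delta'$ back out. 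One such three-line induction, illustrated for the generalized paste rule, handles all ten rules at once.

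Your plan instead treats the rules as reordering problems to be attacked with the compatible exchange rule and ``survival'' sequents $\Gamma,\sigma,\tau \Proves \sigma$ and $\Gamma,\tau,\sigma \Proves \tau$. That is the wrong diagnosis: the rules in Figure~\ref{F3} do not permute the formulas of $\Delta$ relative to anything---$\Delta$ sits in the same position in every premise and in the conclusion. For a rule like the generalized cut (from $\Gamma \Proves \phi$ and $\Gamma,\phi,\Delta \Proves \psi$ infer $\Gamma,\Delta \Proves \psi$), there is nothing to exchange $\phi$ with that would help, and the survival sequents you describe simply are not available for arbitrary $\phi$ and the formulas of $\Delta$. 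You do mention the $\Implies$-rules ``to pass a formula between the antecedent and the succedent,'' but only in passing; that is not a supporting move here---it is the whole proof. Recast your argument as an induction on $|\Delta|$ using $\Implies$-introduction/elimination to reduce to the base case, and the individual derivation trees you envision become unnecessary.
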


\begin{proof}
In the special case that $\Delta$ is empty, each of the rules in Figure \ref{F3} is derivable either because it is a primitive rule of $NOM$ or by Lemmas \ref{2.3} and \ref{2.5}. For each of these rules, the general case follows from this special case via the $\Implies$-introduction and $\Implies$-elimination rules, as in Lemma~\ref{2.3}. For illustration, we prove that the generalized paste rule is derivable. The proof proceeds by induction on the length of $\Delta$. The special case that $\Delta$ is empty is the base case of the induction.

Let $\ell\geq0$, and assume that the generalized paste rule is derivable for all sequences $\Delta$ of length $\ell$. Let $\Delta'$ be a sequence of length $\ell+1$; it is of the form $\Delta, \delta'$ for some sequence $\Delta$ of length $\ell$ and some formula $\delta'$. We now derive $\Gamma, \phi, \Delta, \delta' \Proves \psi$ from $\Gamma \Proves \phi$ and $\Gamma, \Delta, \delta' \Proves \psi$:
\begin{equation*}
\drv{
\drv{
\Gamma \Proves \phi &
\drv
{\Gamma, \Delta, \delta' \Proves \psi}
{\Gamma, \Delta \Proves \delta' \Implies \psi}}
{\Gamma, \phi, \Delta \Proves \delta' \Implies \psi}}
{\Gamma, \phi, \Delta, \delta' \Proves \psi.}
\end{equation*}
In other words, we have derived $\Gamma, \phi , \Delta' \Proves \psi$ from $\Gamma \Proves \phi$ and $\Gamma, \Delta' \Proves \psi$. Proceeding by induction on $\ell$, we conclude that the generalized paste rule is derivable for all sequences $\Delta$ of arbitrary length. The derivability of the other nine rules is proved similarly.
\end{proof}

\begin{figure}[h]

\begin{equation*}
\drv
{\Gamma, \phi, \phi, \Delta \Proves \psi}
{\Gamma, \phi, \Delta \Proves \psi}
\qquad \qquad
\drv
{\Gamma, \phi, \Delta \Proves \psi}
{\Gamma, \phi, \phi, \Delta \Proves \psi}
\end{equation*}

\begin{equation*}
\drv
{\Gamma \Proves \phi & \Gamma, \phi, \Delta \Proves \psi}
{\Gamma,\Delta \Proves \psi}
\qquad \qquad
\drv
{\Gamma \Proves \phi & \Gamma,\Delta \Proves \psi}
{\Gamma, \phi,\Delta \Proves \psi}
\end{equation*}

\begin{equation*}
\drv
{\Gamma, \phi, \psi \Proves \phi &\Gamma, \phi, \psi,\Delta \Proves \chi &   \Gamma, \psi, \phi \Proves \psi}
{\Gamma, \psi, \phi,\Delta \Proves \chi}
\end{equation*}

\begin{equation*}
\drv
{}
{\Gamma, \Not \phi, \phi, \Delta \Proves \psi}
\qquad\qquad
\drv
{}
{\Gamma, \phi, \Not \phi, \Delta \Proves \psi}
\end{equation*}

\begin{equation*}
\drv
{\Gamma, \Not \Not \phi, \Delta  \Proves \psi}
{\Gamma, \phi, \Delta \Proves \psi}
\qquad\qquad
\drv
{\Gamma, \phi, \Delta \Proves \psi}
{\Gamma, \Not \Not \phi, \Delta  \Proves \psi}
\end{equation*}

\begin{equation*}
\drv
{\Gamma, \phi,\Delta \Proves \psi & \Gamma, \Not \phi,\Delta \Proves \psi}
{\Gamma,\Delta \Proves \psi}
\end{equation*}
\caption{Some generalized rules of inference that are derivable in $NOM$.}\label{F3}
\end{figure}

\begin{lemma}\label{2.7}
The following sequents are derivable:
\begin{enumerate}
\item $\Gamma, \phi \And \psi, \Not \phi \Proves \phi \And \psi$ and $\Gamma, \phi \And \psi, \Not \psi \Proves \phi \And \psi$,
\item $\Gamma, \Not \phi, \phi \And \psi \Proves \Not \phi$ and $\Gamma, \Not \psi, \phi \And \psi \Proves \Not \psi$,
\item $\Gamma, \Not (\phi \And \psi), \phi \Proves \Not (\phi \And \psi)$ and $\Gamma, \Not (\phi \And \psi), \psi \Proves \Not (\phi \And \psi)$,
\item $\Gamma, \phi, \Not (\phi \And \psi) \Proves \phi$ and $\Gamma, \psi, \Not (\phi \And \psi) \Proves \psi$.
\end{enumerate}
\end{lemma}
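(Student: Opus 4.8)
The plan is to reduce every sequent to the generalized rules of Figure~\ref{F3}, chiefly the two no-premise rules deriving $\Gamma, \Not\phi, \phi, \Delta \Proves \psi$ and $\Gamma, \phi, \Not\phi, \Delta \Proves \psi$ (generalized deductive explosion) together with the three-premise generalized compatible exchange rule. For parts (2)--(4) I would split on a suitable formula using the excluded middle rule so that one branch is the assumption rule, and then move a formula next to its negation in the other branch by a single application of generalized compatible exchange.

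Part (1) is the most direct. From the assumption rule we obtain $\Gamma, \phi \And \psi \Proves \phi \And \psi$, hence $\Gamma, \phi \And \psi \Proves \phi$ by left $\And$-elimination, hence $\Gamma, \phi \And \psi \Proves \Not\Not\phi$ by double-negation introduction (Proposition~\ref{2.4}); the deductive explosion rule then yields $\Gamma, \phi \And \psi, \Not\phi \Proves \phi \And \psi$, and the companion sequent is the same derivation with right $\And$-elimination.

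For part (2) I would apply excluded middle on $\phi$. The $\Not\phi$-branch $\Gamma, \Not\phi, \phi \And \psi, \Not\phi \Proves \Not\phi$ is the assumption rule. The $\phi$-branch $\Gamma, \Not\phi, \phi \And \psi, \phi \Proves \Not\phi$ I would derive by generalized compatible exchange, passing from the order $\Gamma, \Not\phi, \phi, \phi \And \psi$ --- in which $\Not\phi$ and $\phi$ are adjacent, so the premises $\Gamma, \Not\phi, \phi, \phi \And \psi \Proves \phi$ and $\Gamma, \Not\phi, \phi, \phi \And \psi \Proves \Not\phi$ are instances of generalized deductive explosion --- to the order $\Gamma, \Not\phi, \phi \And \psi, \phi$; the third premise $\Gamma, \Not\phi, \phi \And \psi, \phi \Proves \phi \And \psi$ is the paste rule applied to $\Gamma, \Not\phi, \phi \And \psi \Proves \phi$ (left $\And$-elimination) and $\Gamma, \Not\phi, \phi \And \psi \Proves \phi \And \psi$ (assumption). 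Part (3) runs identically with excluded middle on $\phi \And \psi$: the $\Not(\phi \And \psi)$-branch is the assumption rule, and the $\phi \And \psi$-branch $\Gamma, \Not(\phi \And \psi), \phi, \phi \And \psi \Proves \Not(\phi \And \psi)$ is gotten by generalized compatible exchange from the order $\Gamma, \Not(\phi \And \psi), \phi \And \psi, \phi$ --- where $\Not(\phi \And \psi)$ and $\phi \And \psi$ are adjacent --- with third premise $\Gamma, \Not(\phi \And \psi), \phi, \phi \And \psi \Proves \phi$, itself immediate by left $\And$-elimination from the assumption rule. The second sequent of (2) and of (3) is symmetric, splitting on $\psi$ and using right $\And$-elimination.

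Part (4) follows the same scheme but invokes part (2). Applying excluded middle on $\phi$ to $\Gamma, \phi, \Not(\phi \And \psi) \Proves \phi$, the $\phi$-branch is the assumption rule; the $\Not\phi$-branch $\Gamma, \phi, \Not(\phi \And \psi), \Not\phi \Proves \phi$ I would obtain by generalized compatible exchange from the order $\Gamma, \phi, \Not\phi, \Not(\phi \And \psi)$ --- where $\phi$ and $\Not\phi$ are adjacent --- with third premise $\Gamma, \phi, \Not(\phi \And \psi), \Not\phi \Proves \Not(\phi \And \psi)$; this last sequent I would derive from the proof-by-contradiction rule of Proposition~\ref{2.4}, assuming $\phi \And \psi$ and using that $\Gamma, \phi, \Not(\phi \And \psi), \Not\phi, \phi \And \psi \Proves \phi$ by $\And$-elimination while $\Gamma, \phi, \Not(\phi \And \psi), \Not\phi, \phi \And \psi \Proves \Not\phi$ by part (2). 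The second sequent is symmetric. The step I expect to be delicate is exactly the third, ``survival'' premise $\Gamma, \psi, \phi \Proves \psi$ of the compatible exchange rule, which is another compatibility assertion and therefore threatens circularity; the substance of the argument is to choose the interchange and the case split so that this premise collapses to the paste rule, to $\And$-elimination, or to an already-proved part of the lemma --- part (2) being the one case that closes off, via the paste rule, and so terminates the regress.
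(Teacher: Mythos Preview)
Your proposal is correct and follows essentially the same strategy as the paper: in each case the key step is a single application of the generalized compatible exchange rule whose first two premises collapse to generalized explosion because the swap places a formula adjacent to its negation, while the third ``survival'' premise reduces to $\And$-elimination or the paste rule. The only cosmetic differences are that the paper closes off parts (2)--(3) with a cut or with the $\Not$-introduction rule of Proposition~\ref{2.4} rather than with excluded middle, and in part (4) the paper splits on $\phi\And\psi$ (invoking part (3) for the survival premise) where you split on $\phi$ (invoking part (2)); both routes terminate for the same reason.
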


\begin{proof}
We exhibit the derivations of four of these sequents because the derivations of the other four sequents are entirely similar. Refer to Proposition~\ref{2.4} and Theorem~\ref{2.6}.
\begin{equation*}
\drv
{\drv
{\drv
{}
{\Gamma, \phi \And \psi \Proves \phi \And \psi}}
{\Gamma, \phi \And \psi \Proves \phi}
&\drv
{}
{\Gamma, \phi \And \psi, \phi, \Not \phi \Proves \phi \And \psi }}
{\Gamma, \phi \And \psi, \Not \phi \Proves \phi \And \psi.}
\qquad
\drv
{\drv
{\drv
{}
{\Gamma, \Not \phi, \phi \And \psi \Proves \phi \And \psi}}
{\Gamma, \Not \phi, \phi \And \psi \Proves \phi }
&\drv
{}
{\Gamma, \Not \phi, \phi \And \psi \Proves \phi \And \psi}}
{\Gamma, \Not \phi, \phi \And \psi, \phi \Proves \phi \And \psi;}
\end{equation*}

\begin{equation*}
\drv
{\drv
{\drv
{}
{\Gamma, \Not \phi, \phi \And \psi \Proves \phi \And \psi}}
{\Gamma, \Not \phi, \phi \And \psi \Proves \phi}
&\drv
{\drv
{}
{\Gamma, \Not \phi, \phi, \phi \And \psi \Proves \phi}
&\drv
{}
{\Gamma, \Not \phi, \phi, \phi \And \psi \Proves \Not \phi}
&\drv
{}
{\Gamma, \Not \phi, \phi \And \psi, \phi \Proves \phi \And \psi}}
{\Gamma, \Not \phi, \phi \And \psi, \phi \Proves \Not \phi}}
{\Gamma, \Not \phi, \phi \And \psi \Proves \Not \phi.}
\end{equation*}

\begin{equation*}
\drv
{
\drv
{\drv
{}
{\Gamma, \Not(\phi \And \psi), \phi \And \psi, \phi \Proves \phi \And \psi}
&\drv
{}
{\Gamma, \Not(\phi \And \psi), \phi \And \psi, \phi \Proves \Not (\phi \And \psi)}
&\drv
{\drv
{}
{\Gamma, \Not (\phi \And \psi), \phi, \phi \And \psi \Proves \phi \And \psi}}
{\Gamma, \Not (\phi \And \psi), \phi, \phi \And \psi \Proves \phi}}
{\Gamma, \Not (\phi \And \psi), \phi, \phi \And \psi \Proves \Not (\phi \And \psi)}
}
{\Gamma, \Not (\phi \And \psi), \phi \Proves \Not (\phi \And \psi).}
\end{equation*}

\begin{equation*}
\drv
{\drv
{\drv
{}
{\Gamma, \phi, \phi \And \psi \Proves \phi \And \psi}}
{\Gamma, \phi, \phi \And \psi \Proves \phi}
&\drv
{}
{\Gamma, \phi, \phi \And \psi, \Not (\phi \And \psi) \Proves \phi}
&\drv
{\drv
{\drv
{}
{\Gamma, \phi \And \psi \Proves \phi \And \psi}}
{\Gamma, \phi \And \psi \Proves \phi}
&\drv
{}
{\Gamma, \phi \And \psi \Proves \phi \And \psi}}
{\Gamma, \phi \And \psi, \phi \Proves \phi \And \psi}}
{\Gamma, \phi \And \psi,\phi, \Not (\phi \And \psi) \Proves \phi;}
\end{equation*}

\begin{equation*}
\drv
{
\drv
{}
{\Gamma, \phi \And \psi,\phi, \Not (\phi \And \psi) \Proves \phi}
&\drv{
\drv{}{\Gamma, \Not(\phi \And \psi), \phi \Proves \Not (\phi \And \psi)}&
\drv{}{\Gamma, \Not(\phi \And \psi), \phi \Proves \phi}}
{\Gamma, \Not(\phi \And \psi), \phi, \Not (\phi \And \psi) \Proves \phi}}
{\Gamma, \phi, \Not(\phi \And \psi) \Proves \phi.}
\end{equation*}
\end{proof}

\begin{proposition}\label{2.8}
The following inference rules are derivable:
\begin{equation*}
\begin{aligned}
\drv
{\Gamma, \phi, \Not (\phi \And \psi), \Delta \Proves \chi}
{\Gamma, \Not (\phi \And \psi), \phi, \Delta \Proves \chi}
\end{aligned}\;,
\qquad \qquad
\begin{aligned}
\drv
{\Gamma, \Not (\phi \And \psi), \phi, \Delta \Proves \chi}
{\Gamma, \phi, \Not (\phi \And \psi), \Delta \Proves \chi}
\end{aligned}\;,
\end{equation*}
\begin{equation*}
\begin{aligned}
\drv
{\Gamma, \psi, \Not (\phi \And \psi), \Delta \Proves \chi}
{\Gamma, \Not (\phi \And \psi), \psi, \Delta \Proves \chi}
\end{aligned}\;,
\qquad \qquad
\begin{aligned}
\drv
{\Gamma, \Not (\phi \And \psi), \psi, \Delta \Proves \chi}
{\Gamma, \psi, \Not (\phi \And \psi), \Delta \Proves \chi}
\end{aligned}\;.
\end{equation*}
\end{proposition}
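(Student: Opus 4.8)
The plan is to invoke the generalized compatible exchange rule of Theorem~\ref{2.6} four times, once for each of the four rules in the statement, with the two ``side'' premises supplied verbatim by Lemma~\ref{2.7}. Recall that the generalized compatible exchange rule allows one to pass from $\Gamma, \alpha, \beta, \Delta \Proves \chi$ to $\Gamma, \beta, \alpha, \Delta \Proves \chi$, provided that both $\Gamma, \alpha, \beta \Proves \alpha$ and $\Gamma, \beta, \alpha \Proves \beta$ are derivable. So each of the four rules reduces to checking that the relevant compatibility sequents are available.

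For the first rule I would take $\alpha := \phi$ and $\beta := \Not(\phi \And \psi)$. The side premise $\Gamma, \phi, \Not(\phi \And \psi) \Proves \phi$ is exactly the first sequent of Lemma~\ref{2.7}(4), and the side premise $\Gamma, \Not(\phi \And \psi), \phi \Proves \Not(\phi \And \psi)$ is exactly the first sequent of Lemma~\ref{2.7}(3). Feeding these two sequents, together with the hypothesis $\Gamma, \phi, \Not(\phi \And \psi), \Delta \Proves \chi$, into the generalized compatible exchange rule yields $\Gamma, \Not(\phi \And \psi), \phi, \Delta \Proves \chi$, which is the desired conclusion. For the second rule I would simply swap the roles, taking $\alpha := \Not(\phi \And \psi)$ and $\beta := \phi$; the very same two sequents from Lemma~\ref{2.7}(3) and Lemma~\ref{2.7}(4) now serve as the side premises in the opposite order, and the exchange rule delivers $\Gamma, \phi, \Not(\phi \And \psi), \Delta \Proves \chi$ from $\Gamma, \Not(\phi \And \psi), \phi, \Delta \Proves \chi$.

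The third and fourth rules are handled in exactly the same manner, except that one uses the \emph{second} sequents of Lemma~\ref{2.7}(3) and Lemma~\ref{2.7}(4), namely $\Gamma, \Not(\phi \And \psi), \psi \Proves \Not(\phi \And \psi)$ and $\Gamma, \psi, \Not(\phi \And \psi) \Proves \psi$, which concern $\psi$ and $\Not(\phi \And \psi)$ in place of $\phi$ and $\Not(\phi \And \psi)$. Taking $\alpha := \psi$, $\beta := \Not(\phi \And \psi)$ gives the third rule, and taking $\alpha := \Not(\phi \And \psi)$, $\beta := \psi$ gives the fourth.

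I do not expect any genuine obstacle here: the substance of the proposition has already been absorbed into Lemma~\ref{2.7}, whose four pairs of sequents were proved precisely so that the compatibility hypotheses of the compatible exchange rule are satisfied for the pairs $\{\phi, \Not(\phi \And \psi)\}$ and $\{\psi, \Not(\phi \And \psi)\}$. The only thing that needs care is matching the instantiation of the rule schema to the stated premises, which is routine bookkeeping.
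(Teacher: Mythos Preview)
Your proposal is correct and is essentially identical to the paper's own proof: the paper also applies the generalized compatible exchange rule from Theorem~\ref{2.6}, supplying the two side premises directly from Lemma~\ref{2.7}(3) and Lemma~\ref{2.7}(4), and notes that the remaining three cases are entirely similar.
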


\begin{proof}
We exhibit the derivation of one of these rules because the derivations of the other three rules are entirely similar. Refer to Theorem~\ref{2.6} and Lemma~\ref{2.7}.
\begin{equation*}
\drv
{\drv
{}
{\Gamma, \phi, \Not (\phi \And \psi) \Proves \phi}
&\Gamma, \phi, \Not (\phi \And \psi), \Delta \Proves \chi
&\drv
{}
{\Gamma, \Not (\phi \And \psi), \phi \Proves \Not (\phi \And \psi)}}
{\Gamma, \Not (\phi \And \psi), \phi, \Delta \Proves \chi.}
\end{equation*}
\end{proof}

\begin{proposition}\label{2.9}
The following inference rules are derivable:
\begin{equation*}
\begin{aligned}
\drv
{\Gamma \Proves \Not \phi}
{\Gamma  \Proves \Not (\phi \And \psi)}
\end{aligned}\;, \qquad \qquad
\begin{aligned}
\drv
{\Gamma \Proves \phi}
{\Gamma \Proves \Not (\Not \phi \And \psi)}
\end{aligned}\;,
\end{equation*}
\begin{equation*}
\begin{aligned}
\drv
{\Gamma \Proves \Not \psi}
{\Gamma  \Proves \Not (\phi \And \psi)}
\end{aligned}\;, \qquad \qquad
\begin{aligned}
\drv
{\Gamma \Proves \psi}
{\Gamma \Proves \Not (\phi \And \Not \psi)}
\end{aligned}\;.
\end{equation*}
\end{proposition}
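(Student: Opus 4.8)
The plan is to establish the first (top-left) and third (bottom-left) rules directly, and then to obtain the second and fourth as immediate corollaries by composing with the double-negation introduction rule of Proposition~\ref{2.4}. For the first rule I would produce $\Gamma \Proves \Not(\phi \And \psi)$ from $\Gamma \Proves \Not \phi$ by applying the rule of Proposition~\ref{2.4} that infers $\Gamma \Proves \Not \alpha$ from $\Gamma, \alpha \Proves \beta$ and $\Gamma, \alpha \Proves \Not\beta$, taking $\alpha := \phi\And\psi$ and $\beta := \phi$. The premise $\Gamma, \phi\And\psi \Proves \phi$ is just the left $\And$-elimination rule applied to the assumption rule. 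The premise $\Gamma, \phi\And\psi \Proves \Not\phi$ is where the content lies: one cannot simply weaken $\Gamma \Proves \Not\phi$ on the right of the turnstile, but one can invoke the generalized paste rule of Theorem~\ref{2.6} with $\Delta := \phi\And\psi$, whose second premise $\Gamma, \Not\phi, \phi\And\psi \Proves \Not\phi$ is exactly the first sequent of Lemma~\ref{2.7}(2). The third rule is handled in the same way, using the right $\And$-elimination rule and the second sequent of Lemma~\ref{2.7}(2) in place of their left-hand counterparts.

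For the second rule I would apply double-negation introduction (Proposition~\ref{2.4}) to $\Gamma \Proves \phi$, obtaining $\Gamma \Proves \Not\Not\phi$, and then apply the first rule with $\phi$ replaced by $\Not\phi$, which yields $\Gamma \Proves \Not(\Not\phi \And \psi)$. The fourth rule follows symmetrically: double-negation introduction turns $\Gamma \Proves \psi$ into $\Gamma \Proves \Not\Not\psi$, and the third rule with $\psi$ replaced by $\Not\psi$ then gives $\Gamma \Proves \Not(\phi \And \Not\psi)$.

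I expect no serious obstacle, since the structural work has already been carried out in Theorem~\ref{2.6} and Lemma~\ref{2.7}; the only subtle point is precisely the absence of a right-weakening rule in $NOM$, which is what forces the detour through the generalized paste rule instead of a one-line weakening. The other thing to keep straight is the notational substitution when invoking the negation-introduction rule of Proposition~\ref{2.4}, whose ``$\phi$'' here becomes the compound formula $\phi\And\psi$ while its ``$\psi$'' becomes the conjunct $\phi$.
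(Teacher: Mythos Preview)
Your proposal is correct and follows essentially the same route as the paper: derive $\Gamma,\phi\And\psi\Proves\phi$ by $\And$-elimination, derive $\Gamma,\phi\And\psi\Proves\Not\phi$ from $\Gamma\Proves\Not\phi$ and the sequent $\Gamma,\Not\phi,\phi\And\psi\Proves\Not\phi$ of Lemma~\ref{2.7}(2), then apply the negation-introduction rule of Proposition~\ref{2.4}; the second and fourth rules are then obtained exactly as you say, by prefixing double-negation introduction. One small terminological correction: the step that passes from $\Gamma\Proves\Not\phi$ and $\Gamma,\Not\phi,\phi\And\psi\Proves\Not\phi$ to $\Gamma,\phi\And\psi\Proves\Not\phi$ is the generalized \emph{cut} rule of Theorem~\ref{2.6} (eliminating the occurrence of $\Not\phi$ in the antecedent), not the generalized paste rule (which would instead insert a formula).
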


\begin{proof}
We exhibit the derivations of two of these rules because the derivations of the other two rules are entirely similar. Refer to Proposition~\ref{2.4} and Lemma~\ref{2.7}.

\begin{equation*}
\drv
{
\drv
{\drv
{}
{\Gamma, \phi \And \psi \Proves \phi \And \psi}}
{\Gamma, \phi \And \psi \Proves \phi}&
\drv
{
\Gamma \Proves \Not \phi &
\drv
{}
{\Gamma, \Not \phi,\phi \And \psi \Proves \Not \phi}}
{\Gamma, \phi \And \psi \Proves \Not \phi}}
{\Gamma  \Proves \Not (\phi \And \psi);}
\qquad \qquad
\drv
{\drv
{\Gamma \Proves \phi}
{\Gamma \Proves \Not \Not \phi}}
{\Gamma \Proves \Not (\Not \phi \And \psi).}
\end{equation*}
\end{proof}

\begin{theorem}\label{2.10}
The following inference rules are derivable:
\begin{equation*}
\begin{aligned}
\drv
{\Gamma \Proves \phi \Implies  \psi}
{\Gamma  \Proves \Not (\phi \And \Not(\phi \And \psi))}
\end{aligned}\;, \qquad \qquad
\begin{aligned}
\drv
{\Gamma  \Proves \Not (\phi \And \Not(\phi \And \psi))}
{\Gamma \Proves \phi \Implies  \psi}
\end{aligned}\;.
\end{equation*}
\end{theorem}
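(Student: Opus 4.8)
The plan is to derive the two rules separately; throughout, write $\alpha$ for the formula $\phi \And \Not(\phi\And\psi)$, so that $\Not(\phi\And\Not(\phi\And\psi))$ is $\Not\alpha$ and the two rules become ``from $\Gamma \Proves \phi\Implies\psi$ infer $\Gamma \Proves \Not\alpha$'' and ``from $\Gamma \Proves \Not\alpha$ infer $\Gamma \Proves \phi\Implies\psi$''. \emph{For the first rule}, I would argue by a case split on $\phi$ via the excluded middle rule, reducing the goal to $\Gamma, \phi \Proves \Not\alpha$ and $\Gamma, \Not\phi \Proves \Not\alpha$. Proposition~\ref{2.9} contains the rule inferring $\Gamma \Proves \Not(A\And B)$ from $\Gamma \Proves \Not A$; instantiated at $A = \phi$, $B = \Not(\phi\And\psi)$, its conclusion is $\Gamma \Proves \Not\alpha$ and its premise is $\Gamma \Proves \Not\phi$, so the branch $\Gamma, \Not\phi \Proves \Not\alpha$ follows from this rule together with the assumption rule. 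For the branch $\Gamma, \phi \Proves \Not\alpha$, I would apply $\Implies$-elimination to the hypothesis to get $\Gamma, \phi \Proves \psi$, combine this with $\Gamma, \phi \Proves \phi$ (assumption rule) by $\And$-introduction to get $\Gamma, \phi \Proves \phi\And\psi$, and then apply the rule of Proposition~\ref{2.9} inferring $\Gamma \Proves \Not(A\And\Not B)$ from $\Gamma \Proves B$, instantiated at $A = \phi$, $B = \phi\And\psi$; its conclusion is the desired $\Gamma, \phi \Proves \Not\alpha$. Running the argument through the case split is what avoids carrying the hypothesis $\Gamma \Proves \phi\Implies\psi$ into an enlarged antecedent, which the structural rules would not license.

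\emph{For the second rule}, it suffices by $\Implies$-introduction to derive $\Gamma, \phi \Proves \psi$ from $\Gamma \Proves \Not\alpha$. I would split on $\phi\And\psi$ with the excluded middle rule inside the context $\Gamma, \phi$, reducing the goal to $\Gamma, \phi, \phi\And\psi \Proves \psi$ and $\Gamma, \phi, \Not(\phi\And\psi) \Proves \psi$. The first is right $\And$-elimination on the assumption rule. For the second, observe that in the context $\Gamma, \phi, \Not(\phi\And\psi)$ one may reassert $\phi$ by Lemma~\ref{2.7}(4), so $\And$-introduction yields $\Gamma, \phi, \Not(\phi\And\psi) \Proves \alpha$; it then suffices to derive $\Gamma, \phi, \Not(\phi\And\psi) \Proves \Not\alpha$, after which Proposition~\ref{2.2} produces $\psi$. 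To get $\Gamma, \phi, \Not(\phi\And\psi) \Proves \Not\alpha$ from the hypothesis $\Gamma \Proves \Not\alpha$, I would use the generalized cut rule of Theorem~\ref{2.6} — which derives $\Gamma, \Delta \Proves \psi$ from $\Gamma \Proves \chi$ and $\Gamma, \chi, \Delta \Proves \psi$ — with $\chi = \Not\alpha$ and $\Delta = \phi, \Not(\phi\And\psi)$, reducing the task to the sequent $\Gamma, \Not\alpha, \phi, \Not(\phi\And\psi) \Proves \Not\alpha$.

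I expect this last sequent to be the main obstacle: it asserts that once $\Not\alpha$ has been assumed it may be recovered after the two further assumptions $\phi$ and $\Not(\phi\And\psi)$, and it is delicate precisely because $NOM$ does not admit weakening at the right-hand end of an antecedent, so $\Gamma \Proves \Not\alpha$ cannot simply be transplanted into the longer context. The key observation is that Proposition~\ref{2.8}, whose rules exchange a formula $A$ (respectively $B$) with an adjacent $\Not(A\And B)$ in an antecedent, applies at $A = \phi$, $B = \Not(\phi\And\psi)$: then $A\And B$ is $\alpha$ and $\Not(A\And B)$ is $\Not\alpha$, so these rules permit exchanging $\Not\alpha$ leftward past $\phi$ and past $\Not(\phi\And\psi)$. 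Starting from the instance $\Gamma, \phi, \Not(\phi\And\psi), \Not\alpha \Proves \Not\alpha$ of the assumption rule, I would move $\Not\alpha$ leftward past $\Not(\phi\And\psi)$ and then past $\phi$ by two applications of Proposition~\ref{2.8}, arriving at $\Gamma, \Not\alpha, \phi, \Not(\phi\And\psi) \Proves \Not\alpha$. The remaining steps — uses of the assumption rule, the introduction and elimination rules for $\And$ and $\Implies$, the excluded middle rule, Propositions~\ref{2.2} and~\ref{2.9}, Lemma~\ref{2.7}(4), and the generalized cut rule of Theorem~\ref{2.6} — are routine to combine.
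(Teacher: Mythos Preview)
Your proposal is correct and follows essentially the same route as the paper. The first rule is handled identically (case split on $\phi$, Proposition~\ref{2.9} in each branch), and for the second rule both arguments hinge on the same key step: obtaining $\Gamma, \Not\alpha, \phi, \Not(\phi\And\psi) \Proves \Not\alpha$ from the assumption axiom by two applications of Proposition~\ref{2.8}, then cutting in the hypothesis via Theorem~\ref{2.6}. The only cosmetic differences are that the paper uses the derived contradiction rule of Proposition~\ref{2.4} (from $\Gamma,\phi,\Not(\phi\And\psi)\Proves\alpha$ and $\Gamma,\phi,\Not(\phi\And\psi)\Proves\Not\alpha$ infer $\Gamma,\phi\Proves\phi\And\psi$ directly) where you spell out the excluded-middle split on $\phi\And\psi$ and invoke Proposition~\ref{2.2}, and the paper obtains $\Gamma,\phi,\Not(\phi\And\psi)\Proves\phi$ via Proposition~\ref{2.8} rather than Lemma~\ref{2.7}(4); these are interchangeable.
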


\begin{proof}
Refer to Proposition~\ref{2.9}.

\begin{equation*}
\drv
{\drv
{\drv
{\drv
{}
{\Gamma, \phi \Proves \phi}
& 
\drv
{\Gamma \Proves \phi \Implies \psi}
{\Gamma, \phi \Proves \psi}}
{\Gamma, \phi \Proves \phi \And \psi}}
{\Gamma, \phi \Proves \Not (\phi \And \Not (\phi \And \psi))} &
\drv
{\drv
{}
{\Gamma, \Not \phi \Proves \Not \phi}}
{\Gamma, \Not \phi \Proves \Not (\phi \And \Not (\phi \And \psi))}
}
{\Gamma  \Proves \Not (\phi \And \Not(\phi \And \psi)).}
\end{equation*}

\quad

Refer to Propositions \ref{2.4} and \ref{2.8}.

\begin{equation*}
\drv
{\Gamma \Proves \Not (\phi \And \Not(\phi \And \psi))
&\drv
{\drv
{\drv
{}
{\Gamma, \phi, \Not (\phi \And \psi), \Not(\phi \And \Not(\phi \And \psi)) \Proves \Not(\phi \And \Not(\phi \And \psi))}}
{\Gamma, \phi, \Not(\phi \And \Not(\phi \And \psi)), \Not (\phi \And \psi) \Proves \Not(\phi \And \Not(\phi \And \psi))}}
{\Gamma, \Not(\phi \And \Not(\phi \And \psi)),  \phi, \Not (\phi \And \psi) \Proves \Not(\phi \And \Not(\phi \And \psi))}}
{\Gamma, \phi, \Not (\phi \And \psi) \Proves \Not (\phi \And \Not(\phi \And \psi));}
\end{equation*}

\begin{equation*}
\drv
{\drv
{\drv
{\drv
{\drv
{\drv
{}
{\Gamma, \Not (\phi \And \psi), \phi \Proves \phi}}
{\Gamma, \phi, \Not (\phi \And \psi) \Proves \phi}
&\drv
{}
{\Gamma, \phi, \Not (\phi \And \psi) \Proves \Not(\phi \And \psi)}}
{\Gamma, \phi, \Not (\phi \And \psi) \Proves \phi \And \Not(\phi \And \psi)}
&\drv
{\Gamma \Proves \Not (\phi \And \Not(\phi \And \psi))}
{\Gamma, \phi, \Not (\phi \And \psi) \Proves \Not(\phi \And \Not(\phi \And \psi))}}
{\Gamma, \phi \Proves \phi \And \psi}}
{\Gamma, \phi \Proves \psi}}
{\Gamma \Proves \phi \Implies \psi.}
\end{equation*}
\end{proof}

\section{Noncommutativity, soundness, and completeness}\label{S3}

This section contains proofs establishing the basic metamathematical properties of $NOM$. We begin with the admissibility of the weakening rule.

\begin{proposition}\label{3.1}
If a sequent $\Gamma \Proves \phi$ is derivable, then $\Delta,\Gamma \Proves \phi$ is also derivable.
\end{proposition}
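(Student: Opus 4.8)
The plan is to prove, by a single induction, a slightly more structural statement from which Proposition~\ref{3.1} follows at once. For a finite sequence $\Delta$ and a derivation $D$, let $\Delta \ast D$ denote the figure obtained from $D$ by replacing every sequent $\Sigma \Proves \theta$ occurring anywhere in $D$ with $\Delta, \Sigma \Proves \theta$. I claim that $\Delta \ast D$ is again a derivation, and that if the conclusion of $D$ is $\Sigma_0 \Proves \theta_0$ then the conclusion of $\Delta \ast D$ is $\Delta, \Sigma_0 \Proves \theta_0$. Applying this to a derivation of $\Gamma \Proves \phi$ gives the proposition. The special case where $\Delta$ has length one recovers the admissibility of the weakening rule mentioned in Section~\ref{S1}; a general $\Delta$ could alternatively be obtained by iterating that case, but the direct argument is no harder.

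The claim is proved by induction on $D$. If $D$ is a single application of the assumption rule, its conclusion has the form $\Sigma, \theta \Proves \theta$, and $\Delta \ast D$ is $\Delta, \Sigma, \theta \Proves \theta$, which is again an instance of the assumption rule. If $D$ ends with an application of one of the ten other rules of Figure~\ref{F1} to immediate subderivations $D_1, \dots, D_k$, then by the inductive hypothesis each $\Delta \ast D_i$ is a derivation whose conclusion is the $\Delta$-prefixed version of the conclusion of $D_i$. It then suffices to observe that prefixing $\Delta$ to the antecedents of the premises and of the conclusion of this rule application yields another valid instance of the same rule. This is verified by inspecting the eleven rules one at a time: in each rule the displayed context $\Gamma$ occurs as an initial segment of the antecedent of every premise and of the conclusion, and no rule carries a side condition constraining $\Gamma$; hence replacing the schematic context $\Gamma$ by $\Delta, \Gamma$ — which is exactly the effect of prefixing $\Delta$ throughout this rule application — carries any instance of the rule to another instance. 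Therefore $\Delta \ast D$ is a derivation with the required conclusion.

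There is no real obstacle here; the only points needing care are bookkeeping. First, one must prefix $\Delta$ to \emph{every} sequent in $D$, not merely to its conclusion, so that each intermediate rule application remains legal. Second, prefixing on the left is essential: the context $\Gamma$ appears specifically as a left segment in all the rules, so appending $\Delta$ on the right of $\Gamma$ would instead spoil the required shape of the premises of several rules, such as the non-principal premise of the cut rule and the premise of the $\Implies$-elimination rule. This is as expected, since the exchange rule is not admissible in $NOM$.
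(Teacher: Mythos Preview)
Your proof is correct and takes essentially the same approach as the paper: both argue by induction on derivations, observing that in every rule of Figure~\ref{F1} the schematic context $\Gamma$ occurs as an initial segment of every premise and of the conclusion, so prefixing $\Delta$ throughout carries any rule instance to another. Your formulation via the transformation $\Delta \ast D$ and the closing remarks about why left-prefixing matters are slightly more explicit than the paper's version, but the content is the same.
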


\begin{proof}
Recall that the length of a derivation is the number of inferences in that derivation. We prove the proposition by induction on the length of a derivation of $\Gamma \Proves \phi$. If $\Gamma \Proves \phi$ has a derivation of length one, then it is of the form $\Gamma_0, \phi \Proves \phi$, and hence $\Delta, \Gamma \Proves \phi$ is of the form $\Delta,\Gamma_0, \phi \Proves \phi$. Therefore, if $\Gamma \Proves \phi$ has a derivation of length $\ell = 1$, then $\Delta, \Gamma \Proves \phi$ is derivable.

Let $\ell > 1$, and assume that the proposition holds for all sequents that have a derivation of length smaller than $\ell$. Let $\Gamma \Proves \phi$ be a sequent that has a derivation of length $\ell$. Then, there exist sequents $\Gamma_1 \Proves \phi_1$, $\Gamma_2 \Proves \phi_2$, and $\Gamma_3 \Proves \phi_3$, which may be identical, that have derivations of length less than $\ell$ and that yield $\Gamma \Proves \phi$ via a primitive inference of $NOM$:
\begin{equation*}
\begin{aligned}
\drv
{\Gamma_1 \Proves \phi_1 & \Gamma_2 \Proves \phi_2 & \Gamma_3 \Proves \phi_3}
{\Gamma \Proves \phi}
\end{aligned}\;.
\end{equation*}
Perusing Figure~\ref{F1}, we conclude that
\begin{equation*}
\begin{aligned}
\drv
{\Delta, \Gamma_1 \Proves \phi_1 & \Delta, \Gamma_2 \Proves \phi_2 & \Delta, \Gamma_3 \Proves \phi_3}
{\Delta, \Gamma \Proves \phi}
\end{aligned}
\end{equation*}
is also a primitive inference of $NOM$. By the induction hypothesis, the sequents $\Delta, \Gamma_1 \Proves \phi_1$, $\Delta, \Gamma_2 \Proves \phi_2$, and $\Delta, \Gamma_3 \Proves \phi_3$ are all derivable, and thus the sequent $\Delta, \Gamma \Proves \phi$ is also derivable. Therefore, the proposition holds for all sequents that have a derivation of length smaller than or equal to $\ell$. By induction on $\ell$, the proposition holds for all sequents that have a derivation of any length.
\end{proof}

Let $H$ be the Hilbert-type deductive system given in \cite{Kleene}*{sec.~19}. Let $LK$ be the deductive system $G1$ given in \cite{Kleene}*{sec.~77}.
Let the \emph{exchange rule} (E) be the following rule of inference:
\begin{equation*}
\drv
{\Gamma, \phi, \psi, \Delta \Proves \chi}
{\Gamma, \psi, \phi, \Delta \Proves \chi.}
\end{equation*}

\begin{theorem}\label{3.2}
Let $\phi_1, \ldots, \phi_n \Proves \psi_0$ be a sequent. Then, the following are equivalent:
\begin{enumerate}
\item $\phi_1, \ldots, \phi_n \Proves \psi_0$ is derivable in $NOM + E$;
\item $\phi_1, \ldots, \phi_n \Proves \psi_0$ is derivable in $LK$;
\item $(\phi_1 \And \cdots \And \phi_n) \Implies \psi_0$ is derivable in $H$.
\end{enumerate}
When $n=0$, the expression $(\phi_1 \And \cdots \And \phi_n) \Implies \psi_0$ means $\psi_0$.
\end{theorem}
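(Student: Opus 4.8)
The plan is to prove the cycle of implications $(1) \Rightarrow (2) \Rightarrow (3) \Rightarrow (1)$, reducing the equivalence between the sequent calculus $NOM + E$, the Gentzen-style system $LK$, and the Hilbert-style system $H$ to standard metatheorems about classical logic together with the derivations already established in Sections~2 and~3.

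For $(1) \Rightarrow (2)$, I would argue that each primitive rule of $NOM$ is admissible in $LK$, and that the added exchange rule $(E)$ is of course admissible in $LK$ since $LK$ treats antecedents as multisets. The assumption rule, $\And$-rules, and the $\Implies$-introduction rule are immediate from the corresponding $LK$ rules together with weakening and contraction. The cut rule and the paste rule are instances of the $LK$ cut rule (for the paste rule, cut $\Gamma \Proves \phi$ against $\Gamma,\phi \Proves \psi$ after weakening $\psi$ in; the compatible exchange rule is handled the same way). The $\Implies$-elimination rule follows by cutting against $\phi \Implies \psi \Proves \phi \Implies \psi$ and using left $\Implies$. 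The excluded middle and deductive explosion rules are routine classical derivations in $LK$. A short induction on the length of a $NOM+E$ derivation then gives $(1) \Rightarrow (2)$; I would cite \cite{Kleene}*{sec.~77} for the properties of $LK$ used here.

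For $(2) \Rightarrow (3)$, I would use the standard equivalence between the Gentzen system $LK$ and the Hilbert system $H$: a sequent $\phi_1,\ldots,\phi_n \Proves \psi_0$ is derivable in $LK$ if and only if $(\phi_1 \And \cdots \And \phi_n) \Implies \psi_0$ is a theorem of $H$ (with the convention for $n=0$). One direction is the deduction theorem for $H$ combined with the fact that every axiom and rule of $H$ is $LK$-derivable; the converse is the admissibility of cut in $LK$ together with an induction that interprets $LK$-derivability as $H$-provability of the corresponding conjunction-implication. Both directions are classical folklore; I would cite \cite{Kleene}*{sec.~77--78} rather than reproving them, noting only that $n=0$ is the degenerate case where the sequent is $\Proves \psi_0$ and the formula is $\psi_0$ itself.

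For $(3) \Rightarrow (1)$, I would show that $NOM + E$ derives, from no premises, every sequent $\Proves \theta$ whenever $\theta$ is a theorem of $H$, and then derive $\phi_1,\ldots,\phi_n \Proves \psi_0$ from $\Proves (\phi_1 \And \cdots \And \phi_n) \Implies \psi_0$. For the latter step, $n$ applications of $\Implies$-elimination (or rather the generalized form in Theorem~\ref{2.6}) strip off the implications, and then repeated use of $\And$-introduction together with the assumption rule and the exchange rule reassemble $\phi_1 \And \cdots \And \phi_n$ in the antecedent; here the exchange rule is genuinely needed, since without it one cannot in general permute the $\phi_i$ into the order required to build the conjunction. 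For the former step, I would pick a specific axiomatization of $H$ from \cite{Kleene}*{sec.~19} and derive each axiom scheme as a closed sequent in $NOM + E$; most of these are already available, since Section~2 derives the Sasaki-arrow characterization, double-negation rules, and various negation and conjunction lemmas, and with the exchange rule in hand the remaining classical tautologies (e.g.\ the $\Implies$-distribution axiom and the contraposition axioms) are straightforward. Modus ponens is handled by Proposition~\ref{2.1}. The one structural subtlety is that $H$-derivations may use modus ponens on open formulas, so I would phrase the induction as: if $\theta$ has an $H$-derivation then $\Proves \theta$ is derivable in $NOM + E$, using that both axioms and the conclusion of modus ponens remain closed sequents of the form $\Proves(\cdots)$.

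The main obstacle I anticipate is the $(3) \Rightarrow (1)$ direction, specifically verifying that \emph{every} axiom scheme of the chosen Hilbert system $H$ becomes a derivable closed sequent in $NOM + E$. The presence of the exchange rule collapses $NOM$ to classical logic in spirit, so no axiom should be problematic in principle, but the Sasaki-arrow interpretation of $\Implies$ means that classical-looking axioms such as $\phi \Implies (\psi \Implies \phi)$ or the distribution axiom are not syntactically trivial and require genuine derivations building on the machinery of Lemma~\ref{2.7} and Propositions~\ref{2.8} and~\ref{2.9}. To keep the proof manageable I would isolate the few nonobvious axioms, give their $NOM + E$ derivations explicitly, and dispatch the rest by appeal to the generalized structural rules of Theorem~\ref{2.6} and the weakening admissibility of Proposition~\ref{3.1}.
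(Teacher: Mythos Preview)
Your plan follows the same cycle $(1)\Rightarrow(2)\Rightarrow(3)\Rightarrow(1)$ as the paper, and the treatment of $(1)\Rightarrow(2)$ and $(2)\Rightarrow(3)$ is essentially identical. For $(3)\Rightarrow(1)$ you also take the paper's route: derive each $H$-axiom as a closed sequent in $NOM+E$, simulate modus ponens via Proposition~\ref{2.1}, and then pass from $\Proves(\phi_1\And\cdots\And\phi_n)\Implies\psi_0$ to the original sequent by weakening, $\Implies$-elimination, $\And$-introduction, and cut.

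There is one nontrivial step you are glossing over. The Hilbert system $H$ of \cite{Kleene}*{sec.~19} has axiom schemes for \emph{all} the propositional connectives, including $\Or$ and $\leftrightarrow$, and these are not primitive symbols of $NOM$. A derivation of $(\phi_1\And\cdots\And\phi_n)\Implies\psi_0$ in $H$ may in principle pass through formulas containing such connectives, so you cannot simply ``derive each axiom scheme as a closed sequent in $NOM+E$'' without first arguing that only the $\And,\Implies,\Not$ axioms are needed. The paper handles this by invoking cut elimination (\cite{Kleene}*{Thm.~46, Cor.~2, Thm.~47}) to obtain an $H$-derivation that uses only axioms in the connectives actually occurring in the endformula; only then does it exhibit the $NOM+E$ derivations of precisely those seven axiom schemes. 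You should either add this conservativity step or else explain how to derive the $\Or$-axioms in $NOM+E$ with $\Or$ treated as the abbreviation $\Not(\Not\phi\And\Not\psi)$.

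A smaller slip: the formula in (3) has a single outermost implication with a conjunctive antecedent, not $n$ nested implications, so the final extraction is one $\Implies$-elimination followed by cut against $\phi_1,\ldots,\phi_n\Proves\phi_1\And\cdots\And\phi_n$, not ``$n$ applications of $\Implies$-elimination.'' The exchange rule is indeed needed here, as you note, to build that conjunction.
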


\begin{proof}
To prove the implication $(1) \Rightarrow (2)$, it is sufficient to prove that each of the inference rules of $NOM + E$ is derivable in $LK$. This is a set of exercises in introductory logic. We further remark only that each inference rule of $NOM+E$ is derivable in $LK$ because $LK$ does include the cut rule, which is merely admissible rather than derivable in the rest of that system. The implication $(2) \Rightarrow (3)$ is essentially just \cite{Kleene}*{Thm.~47}\cite{Gentzen2}*{sec.~V.6}. Indeed the deduction theorem for $H$ implies that $\psi_0$ may be derived from the assumptions $\phi_1, \ldots, \phi_n$ if and only if the formula $(\phi_1 \And \cdots \And \phi_n) \Implies \psi_0$ may be derived without any assumptions \cite{Kleene}*{Thm.~1}. It remains to prove the implication $(3) \Rightarrow (1)$.

Assume that  the formula $(\phi_1 \And \cdots \And \phi_n) \Implies \psi_0$ has a derivation in $H$. As a consequence of the cut elimination theorem, this formula has a derivation in $H$ that uses only those logical axioms of $H$ that contain the symbols $\And$, $\Implies$, and $\Not$ \cite{Kleene}*{Thm.~46, Cor.~2, Thm.~47}. For each such logical axiom $\alpha$, the sequent $\Proves \alpha$ is derivable in $NOM + E$: The sequent $\Proves\phi \Implies (\psi \Implies \phi)$ has the derivation
\begin{equation*}
\drv
{\drv
{\drv
{\drv
{}
{\psi, \phi \Proves \phi}}
{\phi, \psi \Proves \phi}}
{\phi \Proves \psi \Implies \phi}}
{\Proves\phi \Implies (\psi \Implies \phi).}
\end{equation*}
The sequent $\Proves (\phi \Implies \psi) \Implies ((\phi \Implies (\psi \Implies \chi)) \Implies (\phi \Implies \chi))$ has the derivation
\begin{equation*}
\drv
{\drv
{\drv
{\drv
{\drv
{\drv
{\drv
{}
{\phi \Implies (\psi \Implies \chi),\phi \Implies \psi \Proves \phi \Implies \psi}}
{\phi \Implies (\psi \Implies \chi),\phi \Implies \psi, \phi \Proves \psi}}
{\phi \Implies \psi,\phi \Implies (\psi \Implies \chi), \phi \Proves \psi}
&\drv
{\drv
{\drv
{}
{\phi \Implies \psi,\phi \Implies (\psi \Implies \chi) \Proves \phi \Implies (\psi \Implies \chi)}}
{\phi \Implies \psi,\phi \Implies (\psi \Implies \chi), \phi \Proves \psi \Implies \chi}}
{\phi \Implies \psi,\phi \Implies (\psi \Implies \chi), \phi, \psi \Proves \chi}}
{\phi \Implies \psi,\phi \Implies (\psi \Implies \chi), \phi \Proves \chi}}
{\phi \Implies \psi,\phi \Implies (\psi \Implies \chi) \Proves \phi \Implies \chi}}
{\phi \Implies \psi \Proves(\phi \Implies (\psi \Implies \chi)) \Implies (\phi \Implies \chi)}}
{\Proves (\phi \Implies \psi) \Implies ((\phi \Implies (\psi \Implies \chi)) \Implies (\phi \Implies \chi)).}
\end{equation*}
The sequent $\Proves \phi \Implies (\psi \Implies (\phi \And \psi))$ has the derivation
\begin{equation*}
\drv
{\drv
{\drv
{\drv
{\drv
{}
{\psi, \phi \Proves \phi}}
{\phi, \psi \Proves \phi}
&\drv
{}
{\phi, \psi \Proves \psi}}
{\phi, \psi \Proves \phi \And \psi}}
{\phi \Proves \psi \Implies (\phi \And \psi)}}
{\Proves \phi \Implies (\psi \Implies (\phi \And \psi)).}
\end{equation*}
The sequents $\Proves (\phi \And \psi) \Implies \phi$ and $\Proves (\phi \And \psi) \Implies \psi$ have the derivations
\begin{equation*}
\drv
{\drv
{\drv
{}
{\phi \And \psi \Proves \phi \And \psi}}
{\phi \And \psi \Proves \phi}}
{\Proves (\phi \And \psi) \Implies \phi,}
\qquad\qquad
\drv
{\drv
{\drv
{}
{\phi \And \psi \Proves \phi \And \psi}}
{\phi \And \psi \Proves \psi}}
{\Proves (\phi \And \psi) \Implies \psi.}
\end{equation*}
By Lemma~\ref{2.4}, the sequent $\Proves (\phi \Implies \psi) \Implies ((\phi \Implies \Not \psi) \Implies \Not \phi)$ has the derivation
\begin{equation*}
\drv
{\drv
{\drv
{\drv
{\drv
{\drv
{}
{\phi \Implies \Not \psi, \phi \Implies \psi \Proves \phi \Implies \psi}}
{\phi \Implies \Not \psi, \phi \Implies \psi, \phi \Proves \psi}}
{\phi \Implies \psi, \phi \Implies \Not \psi, \phi \Proves \psi}
&\drv
{\drv
{}
{\phi \Implies \psi, \phi \Implies \Not \psi \Proves \phi \Implies \Not \psi}}
{\phi \Implies \psi, \phi \Implies \Not \psi, \phi \Proves \Not \psi}}
{\phi \Implies \psi, \phi \Implies \Not \psi \Proves \Not \phi}}
{\phi \Implies \psi \Proves (\phi \Implies \Not \psi) \Implies \Not \phi }}
{\Proves (\phi \Implies \psi) \Implies ((\phi \Implies \Not \psi) \Implies \Not \phi).}
\end{equation*}
By Lemma~\ref{2.3}, the sequent $\Proves \Not \Not \phi \Implies \phi$ has the derivation
\begin{equation*}
\drv
{\drv
{}
{\Not \Not \phi \Proves \phi}}
{\Proves  \Not \Not \phi \Implies \phi.}
\end{equation*}
Therefore, for every logical axiom $\alpha$ in the derivation of $(\phi_1 \And \cdots \And \phi_n) \Implies \psi_0$, the sequent $\Proves \alpha$ is derivable in $NOM+E$.

Furthermore, by Proposition~\ref{2.1}, the inference rule
\begin{equation*}
\drv{\Proves \phi & \Proves \phi \Implies \psi}{\Proves \psi}
\end{equation*}
is also derivable in $NOM+E$. Therefore, we may transform a derivation of $(\phi_1 \And \cdots \And \phi_n) \Implies \psi_0$ in $H$ into a derivation of $\Proves (\phi_1 \And \cdots \And \phi_n) \Implies \psi_0$ in $NOM+E$ essentially by placing the symbol $\Proves$ in front of each formula. Formally, this is a straightforward inductive argument. We conclude that the sequent $\Proves (\phi_1 \And \cdots \And \phi_n) \Implies \psi_0$ is derivable in $NOM+E$.

By Proposition \ref{3.1} and the $\Implies$-elimination rule, the sequent $\phi_1, \ldots, \phi_n, \phi_1 \And \cdots \And \phi_n \Proves \psi_0$ is also derivable in $NOM+E$. Certainly, so is the sequent $\phi_1, \ldots, \phi_n \Proves \phi_1 \And \cdots \And \phi_n$. Appealing to the cut rule, we conclude that $\phi_1, \dots, \phi_n \Proves \phi_0$ is derivable in $NOM+E$, proving the implication $(3)  \Rightarrow (1)$.
\end{proof}

In any orthomodular lattice $\Q$, the binary operations $\Andthen$ and $\Implies$ are defined by $a \Andthen b = (a \Or \Not b) \And b$ and $a \Implies b = \Not a \Or (a \And b)$ for all $a, b \in \Q$. Our convention is that $\Andthen$ associates to the left and $\Implies$ associates to the right. Furthermore, $a_1 \Andthen \cdots \Andthen a_n$ denotes $\top$ when $n=0$.

\begin{definition}\label{3.3}
Let $\Phi$ be the set of formulas of $NOM$. We define an \emph{interpretation} to be a function $\[\cdot\]\: \Phi \to \Q$ such that $\Q$ is an orthomodular lattice. We define a sequent $\phi_1, \ldots, \phi_n \Proves \psi$ to be \emph{true} in such an interpretation if $\[\phi_1\]\Andthen \cdots \Andthen \[\phi_n\] \leq \[\psi\]$. We define a rule of inference to be \emph{sound} in such an interpretation if its conclusion is true whenever its premises are all true.
\end{definition} 

\begin{proposition}\label{3.4}
Let $\[\cdot\]$ be a surjective interpretation. If the rules of inference of $NOM$ are all sound in this interpretation, then all formulas $\phi$ and $\psi$ satisfy
\begin{enumerate}
\item $\[\phi \And \psi\] = \[\phi\] \And \[\psi\]$,
\item $\[\phi \Implies \psi\] = \[\phi\] \Implies \[\psi\]$,
\item $\[\Not\phi\] = \Not \[\phi\]$.
\end{enumerate}
\end{proposition}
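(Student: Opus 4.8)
The plan is to extract each of the three equations from the soundness of carefully chosen rules of inference applied to carefully chosen true sequents, using surjectivity of $\[\cdot\]$ to replace arbitrary lattice elements by interpretations of formulas. Throughout, I will use the basic fact (which one checks directly from the definition of $\Andthen$) that $a \Andthen b \leq c$ is equivalent to $a \And b \leq c$ when $a$ and $b$ are comparable in the relevant way; more to the point, the sequent $\phi \Proves \psi$ is true exactly when $\[\phi\] \leq \[\psi\]$, and the sequent $\phi, \psi \Proves \chi$ is true exactly when $(\[\phi\] \Or \Not\[\psi\]) \And \[\psi\] \leq \[\chi\]$. I will freely use that a sequent $\Proves \chi$ is true iff $\top \leq \[\chi\]$ iff $\[\chi\] = \top$, and that $\phi \Proves \psi$ together with $\psi \Proves \phi$ force $\[\phi\] = \[\psi\]$.

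For equation (a): by the assumption rule the sequents $\phi \And \psi \Proves \phi \And \psi$, and hence (by the $\And$-elimination rules, which are sound) $\phi \And \psi \Proves \phi$ and $\phi \And \psi \Proves \psi$, are derivable, so they are true; thus $\[\phi \And \psi\] \leq \[\phi\]$ and $\[\phi \And \psi\] \leq \[\psi\]$, giving $\[\phi \And \psi\] \leq \[\phi\] \And \[\psi\]$. For the reverse inequality I use surjectivity: given that I want to show $\[\phi\] \And \[\psi\] \leq \[\phi \And \psi\]$, pick any formula $\theta$ with $\[\theta\] = \[\phi\] \And \[\psi\]$; then $\theta \Proves \phi$ and $\theta \Proves \psi$ are true, so $\And$-introduction soundness gives $\theta \Proves \phi \And \psi$ true, i.e. $\[\phi\] \And \[\psi\] = \[\theta\] \leq \[\phi \And \psi\]$. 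Hence (a).

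For equation (c): to get $\Not\[\phi\] \leq \[\Not\phi\]$, choose $\theta$ with $\[\theta\] = \Not\[\phi\]$; then $\theta \Proves \Not\phi$ must be true — this should follow by instantiating the deductive explosion rule or, more directly, from soundness of the rule taking $\Gamma \Proves \Not\phi$ to $\Gamma,\phi \Proves \psi$ combined with excluded middle, so I will route through the already-derived rules of Proposition~2.4 (e.g. the rule inferring $\Gamma \Proves \Not\phi$ from $\Gamma,\phi \Proves \Not\phi$), each of which is sound because it is derivable. Concretely: $\theta,\phi \Proves \phi$ and $\theta,\phi\Proves\theta$ hold trivially and with $\[\theta\] = \Not\[\phi\]$ one computes $(\[\theta\]\Or\Not\[\phi\])\And\[\phi\] = \Not\[\phi\]\And\[\phi\] = \bot \leq \[\Not\phi\]$, so $\theta,\phi\Proves\Not\phi$ is true, whence by the derived rule $\theta\Proves\Not\phi$ is true and $\Not\[\phi\]\leq\[\Not\phi\]$. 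For the reverse, $\[\Not\phi\]\leq\Not\[\phi\]$, I use that $\phi,\Not\phi\Proves\psi$ is derivable (Lemma~2.3(3)) hence true, which forces $\[\phi\]\And\[\Not\phi\] \leq \[\psi\]$ for all $\psi$; taking $\psi$ with $\[\psi\]=\bot$ (surjectivity) gives $\[\phi\]\And\[\Not\phi\]=\bot$, and symmetrically from $\Not\phi,\phi\Proves\psi$ together with $\Not\phi\Proves\Not\Not\Not\phi\cdots$ — cleaner: from $\phi \Proves \Not\Not\phi$ and $\Not\phi,\phi\Proves\psi$ I get that $\[\Not\phi\]$ is below a complement of $\[\phi\]$, and since in an orthomodular (indeed ortho-) lattice the complement is unique, $\[\Not\phi\] = \Not\[\phi\]$. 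I will present this last step using the characterization that $b = \Not a$ iff $a \And b = \bot$ and $a \Or b = \top$, deriving the join condition from Lemma~2.3(1),(2) similarly.

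Equation (b) then follows from (a) and (c) by definition-chasing plus two applications of the $\Implies$ rules. By the $\Implies$-introduction rule applied to the derivable sequent $\phi\Implies\psi,\phi\Proves\psi$ wait — more efficiently: Theorem~2.10 gives that $\Gamma\Proves\phi\Implies\psi$ and $\Gamma\Proves\Not(\phi\And\Not(\phi\And\psi))$ are interderivable, so these rules are sound, so $\[\phi\Implies\psi\] = \[\Not(\phi\And\Not(\phi\And\psi))\]$; now apply (a) and (c), which are already established, to the right-hand side to rewrite it as $\Not(\[\phi\]\And\Not(\[\phi\]\And\[\psi\])) = \Not\[\phi\]\Or(\[\phi\]\And\[\psi\]) = \[\phi\]\Implies\[\psi\]$, using De Morgan in the orthomodular lattice. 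The main obstacle is the reverse inequality in (c): getting $\[\Not\phi\]\leq\Not\[\phi\]$ cleanly requires knowing that $\[\phi\]\Or\[\Not\phi\]=\top$ as well as $\[\phi\]\And\[\Not\phi\]=\bot$, and the join condition must be squeezed out of soundness of the excluded-middle rule (or of the derived rules in Proposition~2.4), which takes a short computation with the Sasaki operation; everything else is routine once surjectivity is used to name the auxiliary lattice elements by formulas.
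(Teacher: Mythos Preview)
Your arguments for (1) and (2) are correct and essentially identical to the paper's: for conjunction you use surjectivity to pick a formula $\theta$ with $\[\theta\]=\[\phi\]\And\[\psi\]$ and apply $\And$-introduction, and for implication you invoke Theorem~2.10 and then unwind using (1) and (3). That is exactly what the paper does.

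For (3) there is a genuine gap. Your final step relies on the claim that in an orthomodular lattice the complement is unique, equivalently that $b=\Not a$ whenever $a\And b=\bot$ and $a\Or b=\top$. This is false: already in $MO2$ every atom is a complement of every other atom. What \emph{is} true, and what the paper uses, is that an \emph{orthogonal} complement is unique: if $b\leq\Not a$ and $a\Or b=\top$, then orthomodularity gives $\Not a=b\Or(\Not b\And\Not a)=b\Or\bot=b$. So the meet-zero condition $\[\phi\]\And\[\Not\phi\]=\bot$ that you extract is too weak; you need orthogonality $\[\Not\phi\]\leq\Not\[\phi\]$.

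You already have the ingredients for this. From the derivable sequent $\phi,\Not\phi\Proves\psi$ with $\[\psi\]=\bot$ you get $\[\phi\]\Andthen\[\Not\phi\]=\bot$, not merely $\[\phi\]\And\[\Not\phi\]=\bot$ as you wrote; and $a\Andthen b=\bot$ is equivalent (via the Sasaki adjunction) to $a\leq\Not b$, i.e.\ to $a\perp b$. So this single sequent already yields $\[\Not\phi\]\leq\Not\[\phi\]$ directly. Combined with your first half (the choice of $\theta$ with $\[\theta\]=\Not\[\phi\]$ and the derived rule from Proposition~2.4, which is correct and is in fact a slightly different route from the paper's use of the excluded-middle rule), you then have both inequalities and are done, without ever needing the join condition or any uniqueness-of-complement claim.
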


\begin{proof}
Let $\Q$ be the orthomodular lattice that is the codomain of the interpretation $\[\cdot\]$. Assume that the rules of inference of $NOM$ are all sound.

Let $\phi$ be a formula. Let $\alpha$ be a formula such that $\[\alpha\] = \bot$, and let $\beta$ be a formula such that $\[\beta\] = \[\phi\] \Or \[\Not \phi\]$. By assumption, the following derivations both yield sound rules of inference:
\begin{equation*}
\drv
{\drv
{}
{\phi \Proves \phi}}
{\phi, \Not \phi \Proves \alpha,}
\qquad \qquad 
\drv
{\phi \Proves \beta & \Not \phi \Proves \beta}
{\Proves \beta.}
\end{equation*}
The soundness of the first rule implies that $\[\phi\] \Andthen \[\Not\phi\]  \leq \[\alpha\] = \bot$, and thus, $\[\phi\] \perp \[\Not\phi\]$. We certainly have that $\[\phi\] \leq \[\beta\]$ and $\[\Not \phi\] \leq \[\beta\]$, so the soundness of the second rule implies that $\top \leq \[\beta\] = \[\phi\] \Or \[\Not \phi\]$. Altogether, we have that $\[\Not \phi\]$ is a complement of $\[\phi\]$ that is orthogonal to $\[\phi\]$. Therefore, $\[\Not \phi\] = \Not \[\phi\]$ for all formulas $\phi$.

Let $\phi$ and $\psi$ be formulas. Let $\gamma$ be a formula such that $\[\gamma\] = \[\phi\] \And \[\psi\]$.
By assumption, the following derivations all yield sound rules of inference:
\begin{equation*}
\drv
{\drv
{}
{\phi \And \psi \Proves \phi \And \psi}}
{\phi \And \psi \Proves \phi,}
\qquad\qquad
\drv
{\drv
{}
{\phi \And \psi \Proves \phi \And \psi}}
{\phi \And \psi \Proves \psi,}
\qquad \qquad
\drv
{\gamma \Proves \phi & \gamma \Proves \psi}
{\gamma \Proves \phi \And \psi.}
\end{equation*}
The soundness of the first two rules implies that $\[\phi \And \psi\] \leq \[\phi\]$ and $\[\phi \And \psi\] \leq \[\psi\]$; thus, $\[\phi \And \psi\]$ is a lower bound of $\[\phi\]$ and $\[\psi\]$. We certainly have that $\[\gamma\] \leq \[\phi\]$ and $\[\gamma\] \leq \[\psi\]$, so the soundness of the third rule implies that $\[\phi\] \And \[\psi\] = \[\gamma\] \leq \[\phi \And\psi\]$. Altogether, we have that $\[\phi \And \psi\]$ is a lower bound of $\[\phi\]$ and $\[\psi\]$ that is at least as large as their greatest lower bound $\[\phi\] \And \[\psi\]$. Therefore, $\[\phi \And\psi\] = \[\phi\] \And \[\psi\]$ for all formulas $\phi$ and $\psi$.

Let $\phi$ and $\psi$ be formulas. By Theorem~\ref{2.10}, the following derivations yield sound rules of inference:
\begin{equation*}
\drv
{\drv
{}
{\phi \Implies \psi \Proves \phi \Implies \psi}}
{\phi \Implies \psi \Proves \Not (\phi \And \Not (\phi \And \psi)),}
\qquad \qquad
\drv
{\drv
{}
{\Not (\phi \And \Not (\phi \And \psi)) \Proves \Not (\phi \And \Not (\phi \And \psi))}}
{\Not (\phi \And \Not (\phi \And \psi)) \Proves \phi \Implies \psi.}
\end{equation*}
The soundness of the first rule implies that $\[ \phi \Implies \psi\] \leq \[\Not (\phi \And \Not (\phi \And \psi))\]$, and the soundness of the second rule implies the opposite inequality. Altogether, we have that
\begin{align*}&
\[ \phi \Implies \psi\]
=
\[\Not (\phi \And \Not (\phi \And \psi))\]
=
\Not \[\phi \And \Not (\phi \And \psi)\]
=
\Not (\[ \phi\] \And \[\Not (\phi \And \psi)\])
\\ & =
\Not (\[ \phi \] \And \Not \[ \phi \And \psi\])
=
\Not (\[\phi\] \And \Not (\[\phi\] \And \[\psi\]))
=
\Not \[\phi\] \Or (\[\phi\] \And \[\psi\])
= 
\[\phi\] \Implies \[\psi\].
\end{align*}
Therefore, $\[\phi \Implies \psi\] = \[\phi\] \Implies \[\psi\]$ for all formulas $\phi$ and $\psi$.
\end{proof}

\begin{theorem}\label{3.5}
Let $\[\cdot\]$  be an interpretation. If all formulas $\phi$ and $\psi$ satisfy
\begin{enumerate}
\item $\[\phi \And \psi\] = \[\phi\] \And \[\psi\]$,
\item $\[\phi \Implies \psi\] = \[\phi\] \Implies \[\psi\]$,
\item $\[\Not\phi\] = \Not \[\phi\]$,
\end{enumerate}
then the rules of inference of $NOM$ are all sound in this interpretation.
\end{theorem}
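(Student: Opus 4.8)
The plan is to verify, one rule at a time, that each of the eleven inference rules of $NOM$ displayed in Figure~\ref{F1} is sound in any interpretation $\[\cdot\]$ that satisfies equations (a)\ndash(c). Fix the orthomodular lattice $\Q$ that is the codomain of $\[\cdot\]$, and for a prefix $\Gamma = \gamma_1, \ldots, \gamma_k$ abbreviate by $g$ the Sasaki product $\[\gamma_1\] \Andthen \cdots \Andthen \[\gamma_k\] \in \Q$, with $g = \top$ when $k = 0$; then a sequent $\Gamma, \phi_1, \ldots, \phi_m \Proves \psi$ is true exactly when $g \Andthen \[\phi_1\] \Andthen \cdots \Andthen \[\phi_m\] \leq \[\psi\]$. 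First I would record the standard facts about the Sasaki projection in an orthomodular lattice that do all of the work: (i) $x \Andthen y \leq y$; (ii) $x \leq y \IM x \Andthen y = x$; (iii) $x \leq \Not y \IM x \Andthen y = \bot$; (iv) $x \mapsto x \Andthen y$ is monotone; (v) the Sasaki adjunction $x \Andthen y \leq z \EV x \leq (y \Implies z)$; and (vi) $g \leq (g \Andthen y) \Or (g \Andthen \Not y)$. Here (i), (iii), (iv) are immediate from $x \Andthen y = (x \Or \Not y) \And y$; (ii) is a standard restatement of the orthomodular law; (v) is the classical residuation property of the Sasaki hook, whose two directions use the orthomodular law and the Foulis\ndash Holland theorem respectively; and (vi) likewise follows from the orthomodular law, for instance by two applications of Foulis\ndash Holland.

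With these facts in hand, most rules are short. The assumption rule is (i). For the cut rule and the paste rule, the premise $\Gamma \Proves \phi$ gives $g \leq \[\phi\]$, so (ii) collapses $g \Andthen \[\phi\]$ to $g$ and both rules become trivial. The $\And$-introduction and $\And$-elimination rules follow from equation (a) together with the fact that $\And$ is the lattice meet. The $\Implies$-introduction and $\Implies$-elimination rules are precisely the two directions of the adjunction (v), read through equation (b). The deductive explosion rule follows from (iii) using equation (c): the premise gives $g \leq \Not\[\phi\]$, whence $g \Andthen \[\phi\] = \bot$. The excluded middle rule follows from (vi) with $y = \[\phi\]$: its two premises bound $g \Andthen \[\phi\]$ and $g \Andthen \Not\[\phi\]$ by $\[\psi\]$, so $g \leq (g \Andthen \[\phi\]) \Or (g \Andthen \Not\[\phi\]) \leq \[\psi\]$. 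Of the eleven rules, this exhausts all but the compatible exchange rule.

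The main obstacle is the soundness of the compatible exchange rule. Writing $a = \[\phi\]$, $b = \[\psi\]$, $c = \[\chi\]$, its premises assert $(g \Andthen a) \Andthen b \leq a$, $(g \Andthen a) \Andthen b \leq c$, and $(g \Andthen b) \Andthen a \leq b$, and its conclusion asserts $(g \Andthen b) \Andthen a \leq c$. Using (i), the first and third premises upgrade to $(g \Andthen a) \Andthen b \leq a \And b$ and $(g \Andthen b) \Andthen a \leq a \And b$, so it suffices to prove the inequality $(g \Andthen b) \Andthen a \leq (g \Andthen a) \Andthen b$, after which the second premise finishes. I expect this inequality to be where the orthomodular law is used essentially: the first and third premises are exactly the hypotheses of the $\Comp$-introduction rule of Result~\ref{R5}, so they express that $a$ and $b$ become compatible in the orthomodular sense after passing to $g \Andthen a$ and $g \Andthen b$. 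I would try to make this precise by showing that the triples of elements occurring in the two iterated Sasaki projections satisfy the hypotheses of the Foulis\ndash Holland theorem, reducing the target inequality to a distributive computation; a convenient auxiliary identity for this is $(g \Andthen a) \Or \Not a = g \Or \Not a$, which follows from (vi) and lets one replace $g \Andthen a$ by $g$ inside joins with $\Not a$, and symmetrically for $b$. Only the eleven primitive rules of Figure~\ref{F1} require treatment; the generalized structural rules of Figure~\ref{F3} are not part of $NOM$ and play no role here.
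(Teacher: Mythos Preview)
Your treatment of the ten rules other than the compatible exchange rule is correct and somewhat more elementary than the paper's. The paper handles the $\And$ and $\Implies$ rules directly as you do, but for assumption, cut, paste, excluded middle, and deductive explosion it does not isolate your facts (i)--(vi); instead it uses the already-established soundness of the $\Implies$ and $\And$ rules to rewrite each remaining rule as a single inequality in two lattice variables and then invokes the theorem that the free orthomodular lattice on two generators is $2^4 \times MO2$, reducing each verification to a finite check. Your direct arguments via (ii), (iii), and (vi) avoid that appeal; in particular (vi) does go through by Foulis--Holland, since $h = (g \Andthen y) \Or (g \Andthen \Not y)$ satisfies $h = (h \And y) \Or (h \And \Not y)$ and hence commutes with $y$, whence $h = (h \Or y) \And (h \Or \Not y) = (g \Or y) \And (g \Or \Not y) \geq g$.

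The gap is the compatible exchange rule. Your reduction to the inequality $(g \Andthen b) \Andthen a \leq (g \Andthen a) \Andthen b$ under the two side hypotheses is valid, but you do not prove this inequality, and your Foulis--Holland plan is not specific enough to count as a proof: the auxiliary identity $(g \Andthen a) \Or \Not a = g \Or \Not a$ is correct, but it does not touch the quantities $(g \Andthen a) \Or \Not b$ and $(g \Andthen b) \Or \Not a$ that actually occur in the target, and it is not clear which triple of elements you intend to feed to Foulis--Holland. The paper sidesteps this three-variable computation entirely. It first observes that the auxiliary rule
\begin{equation*}
\drv{\Gamma, \phi, \psi \Proves \phi \qquad \Gamma, \psi, \phi \Proves \psi}{\Gamma, \Not\phi, \psi \Proves \Not\phi}
\end{equation*}
becomes, after two $\Implies$-introductions on premises and conclusion, the two-variable inequality $(a \Implies (b \Implies a)) \And (b \Implies (a \Implies b)) \leq \Not a \Implies (b \Implies \Not a)$, which is amenable to the same free-OML check as the other rules. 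It then \emph{derives} the compatible exchange rule syntactically from this auxiliary rule together with the already-sound paste, $\Implies$-introduction, $\Implies$-elimination, deductive-explosion, and excluded-middle rules; since a rule derivable from sound rules is itself sound, this finishes without ever manipulating the three-variable inequality directly. You should either carry your Foulis--Holland plan through to an explicit computation or adopt this syntactic detour.
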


\begin{proof}
For brevity, we use the notation $\[\Gamma\] = \[\gamma_1\] \Andthen \cdots \Andthen \[\gamma_n\]$, where $\Gamma$ is the sequence $\gamma_1, \ldots, \gamma_n$. 

First, we prove the soundness of the conjunction and implication rules. Let $\Gamma \Proves \phi$ and $\Gamma \Proves \psi$ be sequents, and assume that they are true. Then, we have that $\[\Gamma\] \leq \[\phi\]$ and $\[\Gamma\] \leq \[\psi\]$, and hence $\[\Gamma\] \leq \[\phi\] \And \[\psi\] = \[\phi \And \psi\]$. In other words, $\Gamma \Proves \phi \And \psi$ is true. Therefore, the $\And$-introduction rule is sound. We may similarly conclude that the $\And$-eliminiation rules are sound.

The $\Implies$-introduction and $\Implies$-elimination rules are both sound if and only if the inequality $\[\Gamma\] \Andthen \[\phi\] \leq \[\psi\]$ is equivalent to the inequality $\[ \Gamma \]\leq \[\phi\] \Implies \[\psi\]$ for all $\Gamma$, $\phi$, and $\psi$. These inequalities certainly are equivalent because the inequalities $a \Andthen b \leq c$ and $a \leq b \Implies c$ are equivalent in every orthomodular lattice \cite{Finch}. Therefore, the $\Implies$-introduction rule and the $\Implies$-elimination rule are sound.

Of the six remaining rules of inference, we may prove the soundness of all but the conditional exchange rule by appealing to the soundness of the rules for conjunction and implication and to the theorem that the orthomodular lattice $2^4 \times MO2$ is the free orthomodular lattice on two generators \cite{Kalmbach}*{Thm.~2.1}. This theorem implies that an inequality in two variables holds universally in all orthomodular lattices if it holds in $2$ and in $MO2$, making the verification of such inequalities a matter of routine computation.

For illustration, we prove the soudness of the excluded middle rule. Let $\Gamma, \phi \Proves \psi$ and $\Gamma, \Not \phi \Proves \psi$ be sequents, and assume that they are true. By the soundness of the conjunction and implication rules, we find that $\Gamma \Proves (\phi \Implies \psi ) \And (\Not \phi \Implies \psi)$ is also true. By routine computation, we find that the inequality $(a \Implies b) \And (\Not a \Implies b) \leq b$ holds universally in any orthomodular lattice. We infer that \begin{equation*}\[\Gamma\] \leq \[(\phi \Implies \psi ) \And (\Not \phi \Implies \psi)\] = (\[\phi\] \Implies \[\psi \]) \And (\Not \[\phi\] \Implies \[\psi\]) \leq \[\psi\].\end{equation*}
Thus, the sequent $\Gamma \Proves \psi$ is true. Therefore, the excluded middle rule is sound. We may similarly conclude that the deductive explosion rule, the assumption rule, the cut rule, and the paste rule are all sound.

That leaves the conditional exchange rule, which is not directly amenable to the same approach. However, we can use the same approach to prove the soundness of the rule
\begin{equation*}
\begin{aligned}
\drv
{\Gamma, \phi, \psi \Proves \phi & \Gamma, \psi, \phi \Proves \psi}
{\Gamma, \Not \phi, \psi \Proves \Not \phi}
\end{aligned}\;.
\end{equation*}
We can then prove the soundness of the conditional exchange rule by deriving it from the rules whose soundness we have already established:
\begin{equation*}
\drv
{\drv
{\drv
{\drv
{\drv
{\drv
{\Gamma, \phi, \psi \Proves \phi
&\Gamma, \phi, \psi \Proves \chi}
{\Gamma, \phi, \psi, \phi \Proves \chi}}
{\Gamma, \phi, \psi \Proves \phi \Implies \chi}}
{\Gamma, \phi \Proves \psi \Implies(\phi \Implies \chi)}
&\drv
{\drv
{\drv
{\drv
{\Gamma, \phi, \psi \Proves \phi 
&\Gamma, \psi, \phi \Proves \psi}
{\Gamma, \Not \phi, \psi \Proves \Not \phi}}
{\Gamma, \Not \phi, \psi, \phi \Proves \chi}}
{\Gamma, \Not \phi, \psi \Proves \phi \Implies \chi}}
{\Gamma, \Not \phi \Proves \psi \Implies(\phi \Implies \chi) }}
{\Gamma \Proves \psi \Implies(\phi \Implies \chi)}}
{\Gamma, \psi \Proves \phi \Implies \chi}}
{\Gamma, \psi, \phi \Proves \chi.}
\end{equation*}
We have now established the soundness of all eleven inference rules of $NOM$.
\end{proof}

\begin{lemma}\label{3.6}
The deductive system $NOM$ has the following properties:
\begin{enumerate}
\item if $\phi \Proves \psi$ and $\psi \Proves \chi$ are both derivable, then $\phi \Proves \chi$ is derivable;
\item if $\phi \Proves \psi$ is derivable, then $\Not \psi \Proves \Not \phi$ is derivable.
\end{enumerate}
\end{lemma}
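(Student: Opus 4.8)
For part (1), the plan is to combine the weakening rule with the cut rule. Given derivations of $\phi \Proves \psi$ and $\psi \Proves \chi$, I would first apply Proposition~\ref{3.1} to the second of these to obtain a derivation of $\phi, \psi \Proves \chi$. The cut rule, applied with $\Gamma$ equal to the one-term sequence $\phi$ and with $\psi$ as the cut formula, then combines the derivations of $\phi \Proves \psi$ and $\phi, \psi \Proves \chi$ into a derivation of $\phi \Proves \chi$. This argument is routine.

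For part (2), the tempting approach is a direct reductio: assume $\Not \psi$, assume $\phi$, deduce $\psi$ from the hypothesis, and conclude $\Not \phi$. The obstacle is that every way of carrying this out seems to require a sequent such as $\Not \psi, \phi \Proves \Not \psi$, which is an instance of the exchange rule and is not derivable; it is merely sound, and sound only because its antecedent is contradictory, which is not something we can exploit without a completeness theorem. The way around this is to route the argument through the conjunction $\phi \And \psi$, since Lemma~\ref{2.7} furnishes precisely the exchange-like sequents for $\phi \And \psi$ that are unavailable for $\psi$ itself.

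Concretely, I would proceed in three steps, the last of which uses part (1). First, $\Not \psi \Proves \Not (\phi \And \psi)$ follows by applying Proposition~\ref{2.9} to the assumption sequent $\Not \psi \Proves \Not \psi$. Second, $\Not (\phi \And \psi) \Proves \Not \phi$: working in the antecedent $\Not (\phi \And \psi), \phi$, the assumption rule gives $\phi$, Proposition~\ref{3.1} applied to the hypothesis $\phi \Proves \psi$ gives $\psi$, and $\And$-introduction gives $\phi \And \psi$; meanwhile Lemma~\ref{2.7} gives $\Not (\phi \And \psi)$ directly from the same antecedent; the reductio rule of Proposition~\ref{2.4} then discharges $\phi$ to produce $\Not (\phi \And \psi) \Proves \Not \phi$. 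Third, part (1) chains $\Not \psi \Proves \Not (\phi \And \psi)$ with $\Not (\phi \And \psi) \Proves \Not \phi$ to yield $\Not \psi \Proves \Not \phi$. The one nonobvious move is the substitution of Lemma~\ref{2.7} for an exchange step; the rest is bookkeeping with the structural and negation rules already established.
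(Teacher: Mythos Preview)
Your argument for part~(1) is exactly the paper's.

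For part~(2), your proof is correct but follows a different route. You bypass the sequent $\Not\psi, \phi \Proves \Not\psi$ by passing through $\Not(\phi \And \psi)$, using Proposition~\ref{2.9} for the first link and Lemma~\ref{2.7}(3) to supply the needed commutativity for the second. The paper instead derives $\Not\psi, \phi \Proves \Not\psi$ directly: weakening the hypothesis gives $\Not\psi, \phi \Proves \psi$; paste then yields $\Not\psi, \phi, \psi \Proves \phi$; since the context $\Not\psi, \psi, \phi$ is explosive (Theorem~\ref{2.6}), the compatible exchange rule carries $\Not\psi, \psi, \phi \Proves \Not\psi$ over to $\Not\psi, \phi, \psi \Proves \Not\psi$; a cut against $\Not\psi, \phi \Proves \psi$ gives $\Not\psi, \phi \Proves \Not\psi$, and Proposition~\ref{2.4} finishes. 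So the sequent you describe as ``not derivable'' is in fact derivable once the hypothesis $\phi \Proves \psi$ is available---the paper meets the exchange obstacle head-on via compatible exchange rather than routing around it. Your detour through the conjunction is perfectly valid and has the virtue of reusing packaged lemmas; the paper's argument stays closer to the primitive rules and shows that the apparent obstruction dissolves under the hypothesis. Neither is clearly preferable.
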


\begin{proof}
Assume that $\phi \Proves \psi$ and $\psi \Proves \chi$ are both derivable. By Proposition~\ref{3.1}, the sequent $\phi, \psi \Proves \chi$ is also derivable, and therefore, so is the sequent $\phi \Proves \chi$:
\begin{equation*}
\drv
{\drv
{}
{\phi \Proves \psi}
&\drv
{}
{\phi, \psi \Proves \chi}}
{\phi \Proves \chi.}
\end{equation*}

Assume that $\phi \Proves \psi$ is derivable. By Proposition~\ref{3.1}, the sequent $\Not\psi, \phi \Proves \psi$ is also derivable, and therefore, so is the sequent $\Not \psi \Proves \Not \phi$:
\begin{equation*}
\drv
{\drv
{}
{\Not \psi, \phi \Proves \psi}
&\drv
{\drv
{\drv
{}
{\Not \psi, \psi, \phi \Proves \psi}
&\drv
{}
{\Not \psi, \psi, \phi \Proves \Not \psi}
&\drv
{\drv
{}
{\Not \psi, \phi \Proves \psi}
&\drv
{}
{\Not \psi, \phi \Proves \phi}}
{\Not \psi, \phi, \psi \Proves \phi}}
{\Not \psi, \phi, \psi \Proves \Not \psi}}
{\Not \psi, \phi \Proves \Not \psi}}
{\Not \psi \Proves \Not \phi.}
\end{equation*}
Refer to Proposition~\ref{2.4} and Theorem~\ref{2.6}.
\end{proof}

\begin{definition}\label{3.7}
For all formulas $\phi$ and $\psi$ of $NOM$, we define $\phi \leq \psi$ if $\phi \Proves \psi$ is derivable in $NOM$. This relation is reflexive by Definition~\ref{1.1} and transitive by Lemma~\ref{3.6}(1). In other words, $(\Phi, \leq)$ is a preordered set.

We define $(\L,\leq)$ to be the partially ordered set that is obtained by identifying formulas that are equivalent in $(\Phi, \leq)$. Let $[\,\cdot\,]\: \Phi \to \L$ be the quotient map. The partially ordered set $(\L,\leq)$ has a canonical order-reversing involution $\Not$ that is defined by $\Not [\phi] = [\Not \phi]$. This operation is well-defined and order-reversing by Lemma~\ref{3.6}(2), and it is an involution by Lemma~\ref{2.3}.
\end{definition}

\begin{theorem}\label{3.8}
The structure $(\L, \leq, \Not)$ is an orthomodular lattice. Furthermore,
\begin{enumerate}
\item $[\phi \And \psi] = [\phi] \And [\psi]$,
\item $[\phi \Implies \psi] = [\phi] \Implies [\psi]$,
\item $[\Not \phi] = \Not [\phi]$,
\end{enumerate}
for all formulas $\phi$ and $\psi$.
\end{theorem}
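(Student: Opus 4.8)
The plan is to verify directly that $(\L,\leq,\Not)$ satisfies the axioms of an orthomodular lattice, using the rules of $NOM$ together with the derived rules of Section~\ref{S2}, and exploiting that $[\,\cdot\,]\:\Phi\to\L$ is surjective, so that every element of $\L$ has the form $[\phi]$. By Definition~\ref{3.7} we already know that $(\L,\leq)$ is a poset and that $\Not$ is an order-reversing involution on it, so equation (3) holds by construction.

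First I would show that $\L$ is a bounded lattice and obtain equation (1). The $\And$-elimination rules give $\phi\And\psi\Proves\phi$ and $\phi\And\psi\Proves\psi$, so $[\phi\And\psi]$ is a lower bound of $[\phi]$ and $[\psi]$; the $\And$-introduction rule shows that any $[\chi]$ below both of them is below $[\phi\And\psi]$. Hence $[\phi\And\psi]=[\phi]\And[\psi]$, which is (1), and by surjectivity all binary meets exist. All binary joins then exist automatically, because in any poset carrying an order-reversing involution and all binary meets the join of $a$ and $b$ is $\Not(\Not a\And\Not b)$. A greatest element is $\top=[\phi\Implies\phi]$: from the assumption rule $\Gamma,\phi\Proves\phi$ and the $\Implies$-introduction rule, $\chi\Proves\phi\Implies\phi$ is derivable for every formula $\chi$, so all formulas of this shape are equivalent and their common class is the top of $\L$. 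Dually $\bot=\Not\top$; moreover $\bot=[\phi\And\Not\phi]$, since $\phi\And\Not\phi\Proves\phi$ and $\phi\And\Not\phi\Proves\Not\phi$ combined with Proposition~\ref{2.2} give $\phi\And\Not\phi\Proves\chi$ for every $\chi$.

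Next I would check that $\Not$ is an orthocomplementation: it is order-reversing and involutive by Definition~\ref{3.7} (via Lemma~\ref{3.6}(2) and Lemma~\ref{2.3}), we have $[\phi]\And\Not[\phi]=[\phi\And\Not\phi]=\bot$ by the previous step, and so $[\phi]\Or\Not[\phi]=\Not(\Not[\phi]\And[\phi])=\Not\bot=\top$. Thus $(\L,\leq,\Not)$ is an ortholattice. Equation (2) then follows from Theorem~\ref{2.10}, which says exactly that $\phi\Implies\psi$ and $\Not(\phi\And\Not(\phi\And\psi))$ are interderivable: we get $[\phi\Implies\psi]=[\Not(\phi\And\Not(\phi\And\psi))]$, which by (1) and (3) equals $\Not([\phi]\And\Not([\phi]\And[\psi]))$, and this in turn equals $\Not[\phi]\Or([\phi]\And[\psi])=[\phi]\Implies[\psi]$ in the ortholattice $\L$.

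It remains to verify the orthomodular law, which I expect to be the only delicate point. Rather than derive it in the form $[\phi]\leq[\psi]\Rightarrow[\phi]\Or(\Not[\phi]\And[\psi])=[\psi]$ directly---which would come down to producing sequents such as $\psi,\Not\phi\Proves\psi$ from $\phi\Proves\psi$, true but cumbersome to obtain syntactically in a calculus where one may neither reorder assumptions nor substitute equivalents inside an antecedent---I would invoke the standard characterization, coming from the $O_6$ forbidden-subalgebra description of orthomodularity (cf.~\cite{Kalmbach}), that an ortholattice is orthomodular if and only if $a\Implies b=\top$ forces $a\leq b$, where $\Implies$ denotes the Sasaki arrow $\Not a\Or(a\And b)$. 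So suppose $[\phi]\Implies[\psi]=\top$ in $\L$. By equation (2), $[\phi\Implies\psi]=\top=[\phi\Implies\phi]$, whence $\phi\Implies\phi\Proves\phi\Implies\psi$ is derivable; cutting this against the derivable sequent $\Proves\phi\Implies\phi$ yields $\Proves\phi\Implies\psi$, and then the $\Implies$-elimination rule yields $\phi\Proves\psi$, that is, $[\phi]\leq[\psi]$. Since $[\,\cdot\,]$ is surjective, this verifies the criterion for every pair of elements of $\L$, so $\L$ is orthomodular. In this routing the orthomodular law is discharged precisely by the $\Implies$-elimination and cut rules---the place where the deduction theorem built into $NOM$ does its work---and the remaining steps are routine manipulations with the rules of Section~\ref{S2}.
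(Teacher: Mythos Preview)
Your proposal is correct. The verification that $(\L,\leq,\Not)$ is a bounded ortholattice and that equations (1)--(3) hold proceeds exactly as in the paper. The difference lies in how you establish the orthomodular law.

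The paper proves orthomodularity in its defining form: assuming $\phi\Proves\psi$ is derivable, it exhibits explicit derivations in $NOM$ (using Proposition~\ref{3.1}, Proposition~\ref{2.4}, and Theorems~\ref{2.6} and~\ref{2.10}) of both $\psi\Proves\Not(\Not\phi\And\Not(\Not\phi\And\psi))$ and its converse, yielding $[\psi]=[\phi]\Or(\Not[\phi]\And[\psi])$ directly. You instead invoke the standard lattice-theoretic characterization that an ortholattice is orthomodular iff the Sasaki arrow satisfies $a\Implies b=\top\Rightarrow a\leq b$, and then discharge this criterion in two lines using cut and $\Implies$-elimination. Your route is shorter and makes transparent that the orthomodular law is carried precisely by the built-in deduction theorem of $NOM$; the cost is an appeal to an external fact about ortholattices (the $O_6$ criterion), whereas the paper's argument is entirely self-contained within the syntax of the system. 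Both approaches are sound, and your anticipation that the direct route ``would come down to producing sequents such as $\psi,\Not\phi\Proves\psi$ from $\phi\Proves\psi$'' is exactly what the paper does.
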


\begin{proof}
Claim (1) is an immediate consequence of Definition~\ref{1.1}. Claim (3) is an immediate consequence of Definition~\ref{3.7}. With claims (1) and (3) in hand, claim (2) is equivalent to the equality $[\phi \Implies \psi] = [\Not (\phi \And \Not (\phi \And \psi))]$, which is an immediate consequence of Theorem~\ref{2.10}. Thus, claims (1), (2), and (3) are established.

Hence, $\L$ has binary meets that are given by $[\phi] \And [\psi] = [\phi \And \psi]$. Since $\L$ has an order-reversing involution, it also has binary joins, which are given by $[\phi] \Or [\psi] = \Not(\Not [\phi] \And \Not [\psi]) = [\Not (\Not \phi \And \Not \psi)]$.
For all formulas $\phi$ and $\psi$, the sequent $\phi \And \Not \phi \Proves \psi$ is derivable:
\begin{equation*}
\drv
{\drv
{\drv
{}
{\phi \And \Not \phi \Proves \phi \And \Not \phi}}
{\phi \And \Not \phi \Proves \phi}
&\drv
{\drv
{}
{\phi \And \Not \phi \Proves \phi \And \Not \phi}}
{\phi \And \Not \phi \Proves \Not \phi}}
{\phi \And \Not \phi \Proves \psi;}
\end{equation*}
we have appealed to Proposition~\ref{2.2}. Thus, $\L$ has a least element, and moreover, $[\phi] \And \Not [\phi]$ is equal to that least element for every formula $\phi$. Applying the order-reversing involution $\Not$, we find that $\L$ also has a greatest element, and moroever, $\Not[\phi] \Or [\phi]$ is equal to that greatest element for every formula $\phi$. Therefore, $(\L, \leq, \Not)$ is an ortholattice.

Let $\phi$ and $\psi$ be formulas such that $[\phi] \leq [\psi]$, that is, such that the sequent $\phi \Proves \psi$ is derivable.
By Proposition~\ref{3.1}, the sequent $\Not \phi, \psi, \phi \Proves \psi$ is also derivable. We apply Proposition~\ref{2.4} and Theorems \ref{2.6} and \ref{2.10} to derive $\psi \Proves \Not(\Not \phi \And \Not (\Not \phi \And \psi))$:
\begin{equation*}
\drv
{\drv
{\drv
{\drv
{\drv
{}
{\phi \Proves \psi}
&\drv
{}
{\phi, \Not \phi \Proves \psi}}
{\phi, \psi, \Not \phi \Proves \psi}
&\drv
{\drv
{\drv
{\drv
{}
{\Not \phi, \phi, \psi \Proves \phi}
&\drv
{}
{\Not \phi, \phi, \psi \Proves \Not \phi}
&\drv
{}
{\Not \phi, \psi, \phi \Proves \psi}}
{\Not \phi, \psi, \phi \Proves \Not \phi}}
{\Not \phi, \psi \Proves \Not \phi}
&\drv
{}
{\Not \phi, \psi \Proves \psi}}
{\Not \phi,\psi, \Not \phi \Proves \psi}}
{\psi, \Not \phi \Proves \psi}}
{\psi \Proves \Not \phi \Implies \psi}}
{\psi \Proves \Not(\Not \phi \And \Not (\Not \phi \And \psi)).}
\end{equation*}
Therefore, $[\psi] \leq [\Not(\Not \phi \And \Not (\Not \phi \And \psi))] = \Not(\Not [\phi] \And \Not (\Not [\phi] \And [\psi])) = [\phi] \vee (\Not [\phi] \And [\psi])$.

For the opposite inequality, we reason that $\Not \phi \Implies \psi, \phi \Proves \psi$ is derivable by Proposition~\ref{3.1}, and hence the sequent $\Not \phi \Implies \psi \Proves \psi$ is derivable:
\begin{equation*}
\drv
{\drv
{}
{\Not \phi \Implies \psi, \phi \Proves \psi}
&\drv
{\drv
{}
{\Not \phi \Implies \psi \Proves \Not \phi \Implies \psi}}
{\Not \phi \Implies \psi, \Not \phi \Proves \psi}}
{\Not \phi \Implies \psi \Proves \psi.}
\end{equation*}
Thus, $[\Not \phi \Implies \psi] \leq [\psi]$. By Theorem~\ref{2.10}, we also have that $[\Not(\Not \phi \And \Not (\Not \phi \And \psi))] \leq [\Not \phi \Implies \psi]$, and therefore, \begin{equation*}[\phi] \vee (\Not [\phi] \And [\psi]) = \Not(\Not [\phi] \And \Not (\Not [\phi] \And [\psi])) = [\Not(\Not \phi \And \Not (\Not \phi \And \psi))] \leq [\Not \phi \Implies \psi] \leq [\psi].\end{equation*} Altogether, we have that $[\phi] \vee (\Not [\phi] \And [\psi])=  [\psi]$ for all formulas $\phi$ and $\psi$ such that $[\phi] \leq [\psi]$.
We conclude that $(\L, \leq, \neg)$ is an orthomodular lattice, as claimed.
\end{proof}

\begin{corollary}\label{3.9}
If a sequent is true in every interpretation $\[\cdot\]$ such that
\begin{enumerate}
\item $\[\phi \And \psi\] = \[\phi\] \And \[\psi\]$,
\item $\[\phi \Implies \psi\] = \[\phi\] \Implies \[\psi\]$,
\item $\[\Not\phi\] = \Not \[\phi\]$,
\end{enumerate}
\nopagebreak for all formulas $\phi$ and $\psi$, then that sequent is derivable in $NOM$.
\end{corollary}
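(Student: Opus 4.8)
The plan is to recognize the Lindenbaum--Tarski construction of Definition~\ref{3.7} as an interpretation of exactly the kind described, and then to reduce a sequent with several antecedents to one with a single antecedent, where derivability is the very relation defining the order on $\L$. By Theorem~\ref{3.8}, the structure $(\L, \leq, \Not)$ is an orthomodular lattice and the quotient map $[\,\cdot\,]\colon \Phi \to \L$ satisfies equations (1)--(3); hence $[\,\cdot\,]$ is an interpretation in the sense of Definition~\ref{3.3} of precisely the kind appearing in the hypothesis. So if a sequent $\phi_1, \ldots, \phi_n \Proves \psi$ is true in every such interpretation, then in particular $[\phi_1] \Andthen \cdots \Andthen [\phi_n] \leq [\psi]$ in $\L$, and it remains only to deduce derivability from this inequality.

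I would establish, by induction on $n$, the following claim: for every formula $\chi$, if $[\phi_1] \Andthen \cdots \Andthen [\phi_n] \leq [\chi]$ then $\phi_1, \ldots, \phi_n \Proves \chi$ is derivable in $NOM$. When $n = 0$ the hypothesis reads $\top \leq [\chi]$, so $[\chi]$ is the greatest element of $\L$ and in particular $[\Not \chi] \leq [\chi]$; thus $\Not \chi \Proves \chi$ is derivable by Definition~\ref{3.7}, and applying the excluded middle rule to this sequent together with the instance $\chi \Proves \chi$ of the assumption rule yields $\Proves \chi$. For the inductive step, given $[\phi_1] \Andthen \cdots \Andthen [\phi_{n+1}] \leq [\chi]$, I rewrite the left side using the left-associativity of $\Andthen$ and then apply the adjunction $a \Andthen b \leq c \iff a \leq b \Implies c$, which holds in every orthomodular lattice \cite{Finch} and was already used in the proof of Theorem~\ref{3.5}, to obtain $[\phi_1] \Andthen \cdots \Andthen [\phi_n] \leq [\phi_{n+1}] \Implies [\chi]$; by Theorem~\ref{3.8}(2) the right side equals $[\phi_{n+1} \Implies \chi]$. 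The induction hypothesis then furnishes a derivation of $\phi_1, \ldots, \phi_n \Proves \phi_{n+1} \Implies \chi$, and a single application of the $\Implies$-elimination rule produces $\phi_1, \ldots, \phi_n, \phi_{n+1} \Proves \chi$. Specializing the claim to $\chi = \psi$ completes the argument.

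I do not anticipate a genuine obstacle here: Theorem~\ref{3.8} already carries out the substantive work of exhibiting an orthomodular lattice in which derivability is literally the order relation, and conditions (1)--(3) are exactly what make the quotient map a legitimate interpretation, so this corollary is essentially an assembly of earlier results. The only points that require care are the bookkeeping of the iterated adjunction (the nesting and the direction of $\Implies$ are forced by the left-associativity of $\Andthen$), and the degenerate case $n = 0$, which must be treated by hand since there is no leading antecedent formula to discharge.
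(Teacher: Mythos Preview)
Your proposal is correct and follows essentially the same route as the paper's proof: both invoke Theorem~\ref{3.8} to recognize $[\,\cdot\,]\colon\Phi\to\L$ as an interpretation satisfying (1)--(3), then use the adjunction between $\Andthen$ and $\Implies$ together with the excluded middle rule and repeated $\Implies$-elimination to pass from $[\phi_1]\Andthen\cdots\Andthen[\phi_n]\leq[\psi]$ to derivability of the sequent. The paper applies the adjunction $n$ times up front to reach $\top\leq[\phi_1\Implies\cdots\Implies\phi_n\Implies\psi]$ and then unwinds, whereas you package this as an induction on $n$; the content is the same.
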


\begin{proof}
Let $\phi_1, \ldots, \phi_n \Proves \psi_0$ be a sequent that it is true in every interpretation that satisfies conditions \mbox{(1)\ndash(3)}. By Theorem~\ref{3.8}, the quotient map $[\,\cdot\,]\:\Phi \to \L$ is an interpretation that satisfies conditions (1)\ndash(3); hence $[\phi_1] \Andthen \cdots \Andthen [\phi_n] \leq [\psi_0]$. Repeatedly applying the adjunction between $\Andthen$ and $\Implies$ \cite{Finch}, we conclude that $[\phi_1] \Implies \cdots \Implies [\phi_n] \Implies [\psi_0]$ is the greatest element of $\L$.
By Theorem~\ref{3.8}(2), $[\phi_1 \Implies \cdots \Implies \phi_n \Implies \psi_0]$ is the greatest element of $\L$. In particular, the sequents $\psi_0 \Proves \phi_1 \Implies \cdots \Implies \phi_n \Implies \psi_0$ and $\Not \psi_0 \Proves \phi_1 \Implies \cdots \Implies \phi_n \Implies \psi_0$ are both derivable. Applying the excluded middle rule, we infer that $\Proves \phi_1 \Implies \cdots \Implies \phi_n \Implies \psi_0$ is derivable. Repeatedly applying the $\Implies$-elimination rule, we conclude that $\phi_1, \ldots, \phi_n \Proves \psi_0$ is derivable.
\end{proof}

\section{Disjunction and compatibility}\label{S4}

\begin{definition}\label{4.1}
We introduce two abbreviations:
\begin{enumerate}
\item let $\phi \Or \psi$ be an abbreviation for $\Not (\Not \phi \And \Not \psi)$;
\item let $\phi \Comp \psi$ be an abbreviation for $(\phi \Implies (\psi \Implies \phi)) \And (\psi \Implies (\phi \Implies \psi))$.
\end{enumerate}
\end{definition}

In this section, we derive the primitive inference rules for these two connectives. We also derive the equivalence between the given expression for $\phi \Comp \psi$ and the standard expression for it. Along the way, we derive a number of other rules of inference that express intelligible logical principles. We do not appeal to the results of section \ref{S3} in any way. This section exhibits the derivations whose existence it claims, continuing the development that was initiated in section \ref{S2}.

\begin{proposition}\label{4.2}
The following inference rule is derivable:
\begin{equation*}
\begin{aligned}
\drv
{\Gamma, \phi, \psi \Proves \Not \phi}
{\Gamma, \phi \Proves \Not \psi}
\end{aligned}\;.
\end{equation*}
\end{proposition}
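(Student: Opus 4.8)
The plan is to work backwards. First I would apply the excluded middle rule on $\psi$, which reduces the goal $\Gamma, \phi \Proves \Not \psi$ to deriving $\Gamma, \phi, \psi \Proves \Not \psi$; the companion branch $\Gamma, \phi, \Not \psi \Proves \Not \psi$ is an instance of the assumption rule. Since the premise of the proposition is $\Gamma, \phi, \psi \Proves \Not \phi$, the cut rule further reduces this to deriving the single sequent $\Gamma, \phi, \psi, \Not \phi \Proves \Not \psi$.

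The key move is to obtain $\Gamma, \phi, \psi, \Not \phi \Proves \Not \psi$ from the compatible exchange rule. When $\phi$ and $\Not \phi$ stand \emph{adjacently}, explosion is available: the sequents $\Gamma, \phi, \Not \phi, \psi \Proves \Not \phi$ and $\Gamma, \phi, \Not \phi, \psi \Proves \Not \psi$ are instances of the axioms $\Gamma, \phi, \Not \phi, \Delta \Proves \psi$ of Theorem~\ref{2.6}. Applying the compatible exchange rule with its ambient context taken to be $\Gamma, \phi$ and its two exchanged formulas taken to be $\Not \phi$ and $\psi$ then carries $\Gamma, \phi, \Not \phi, \psi \Proves \Not \psi$ over to $\Gamma, \phi, \psi, \Not \phi \Proves \Not \psi$, once its two side premises are supplied: one side premise is $\Gamma, \phi, \Not \phi, \psi \Proves \Not \phi$, again an explosion instance, and the other is $\Gamma, \phi, \psi, \Not \phi \Proves \psi$, which follows by the paste rule from the hypothesis $\Gamma, \phi, \psi \Proves \Not \phi$ together with the assumption $\Gamma, \phi, \psi \Proves \psi$. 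Assembling everything, the derivation is
\begin{equation*}
\drv
{\drv
{\Gamma, \phi, \psi \Proves \Not \phi
&\drv
{\axm{\Gamma, \phi, \Not \phi, \psi \Proves \Not \phi}
&\axm{\Gamma, \phi, \Not \phi, \psi \Proves \Not \psi}
&\drv
{\Gamma, \phi, \psi \Proves \Not \phi
&\axm{\Gamma, \phi, \psi \Proves \psi}}
{\Gamma, \phi, \psi, \Not \phi \Proves \psi}}
{\Gamma, \phi, \psi, \Not \phi \Proves \Not \psi}}
{\Gamma, \phi, \psi \Proves \Not \psi}
&\axm{\Gamma, \phi, \Not \psi \Proves \Not \psi}}
{\Gamma, \phi \Proves \Not \psi.}
\end{equation*}

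The step I expect to be the real obstacle is recognizing that one should \emph{lengthen} the antecedent rather than shorten it. A ``local'' argument inside $\Gamma, \phi, \psi$ is hopeless, because there one only has $\psi$ and $\Not \phi$ available: the sequent $\Gamma, \phi, \psi \Proves \phi$ need not be derivable (it fails already for lines in $\mathbb{C}^2$, by soundness), so the Proposition~\ref{2.9}\ndash style rules for $\Not(\phi \And \psi)$, which would want to combine $\phi$ or $\Not \psi$ with what is at hand, all stall. The trailing $\Not \phi$ in $\Gamma, \phi, \psi, \Not \phi$ is, by contrast, harmless modulo the premise of the proposition \textemdash\ it can be introduced by paste and afterwards discharged by cut \textemdash\ and its sole purpose is that the explosion $\Gamma, \phi, \Not \phi, \psi \Proves \Not \psi$, trivially derivable because $\phi$ and $\Not \phi$ are adjacent there, gets transported to $\Gamma, \phi, \psi, \Not \phi \Proves \Not \psi$ by the compatible exchange rule, whose applicability is itself licensed by that same premise through the side condition $\Gamma, \phi, \psi, \Not \phi \Proves \psi$. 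Once this idea is in hand, the remaining work is routine bookkeeping with the structural rules.
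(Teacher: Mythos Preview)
Your proof is correct and follows essentially the same approach as the paper's: both hinge on the same compatible exchange, with ambient context $\Gamma,\phi$, exchanging $\Not\phi$ and $\psi$, using explosion for the two side premises at $\Gamma,\phi,\Not\phi,\psi$ and the paste rule (hypothesis plus assumption) for $\Gamma,\phi,\psi,\Not\phi\Proves\psi$. The only difference is cosmetic: the paper transports $\chi=\phi$ through the exchange to obtain $\Gamma,\phi,\psi,\Not\phi\Proves\phi$, then applies the derived rules of Proposition~\ref{2.4} (from $\Gamma,\phi,\psi\Proves\phi$ and $\Gamma,\phi,\psi\Proves\Not\phi$ infer $\Gamma,\phi\Proves\Not\psi$), whereas you transport $\chi=\Not\psi$ directly and finish with cut and the primitive excluded middle rule. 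Since Proposition~\ref{2.4} is itself proved via excluded middle, the two routes are equivalent.
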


\begin{proof}
Refer to Proposition~\ref{2.4} and Theorem~\ref{2.6}.
\begin{equation*}
\drv
{\drv
{\drv
{\drv
{}
{\Gamma,  \phi, \Not \phi, \psi \Proves \Not \phi}
&\drv
{}
{\Gamma,  \phi, \Not \phi, \psi \Proves  \phi}
&\drv
{\Gamma,  \phi, \psi \Proves \Not \phi
&\drv
{}
{\Gamma,  \phi, \psi \Proves \psi}}
{\Gamma,  \phi, \psi, \Not \phi \Proves \psi}}
{\Gamma,  \phi, \psi, \Not \phi \Proves  \phi}}
{\Gamma,  \phi, \psi \Proves  \phi}
& \Gamma,  \phi, \psi \Proves \Not \phi}
{\Gamma,  \phi \Proves  \Not \psi.}
\end{equation*}
\end{proof}

\begin{lemma}\label{4.3}
The following inference rule is derivable:
\begin{equation*}
\begin{aligned}
\drv
{\Gamma \Proves \Not (\phi \And \psi) & \Gamma, \chi, \Not \phi \Proves \Not \chi & \Gamma, \chi, \Not \psi \Proves \Not \chi}
{\Gamma \Proves \Not \chi}
\end{aligned}\;.
\end{equation*}
\end{lemma}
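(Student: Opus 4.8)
The plan is to prove $\Gamma \Proves \Not\chi$ by first deriving $\Gamma, \chi \Proves \Not\chi$ and then applying the rule $\drv{\Gamma, \chi \Proves \Not\chi}{\Gamma \Proves \Not\chi}$ of Proposition~\ref{2.4}. To obtain $\Gamma, \chi \Proves \Not\chi$, I will show that the context $\Gamma, \chi$ is contradictory relative to the premises, by deriving both $\Gamma, \chi \Proves (\phi \And \psi) \And \chi$ and $\Gamma, \chi \Proves \Not((\phi \And \psi) \And \chi)$ and then invoking Proposition~\ref{2.2}.

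First I would exploit Proposition~\ref{4.2}, whose rule $\drv{\Gamma, \alpha, \beta \Proves \Not\alpha}{\Gamma, \alpha \Proves \Not\beta}$ I apply with $\alpha = \chi$ and $\beta = \Not\phi$: its hypothesis is then exactly the second premise $\Gamma, \chi, \Not\phi \Proves \Not\chi$, so $\Gamma, \chi \Proves \Not\Not\phi$ is derivable, and double negation elimination (Proposition~\ref{2.4}) gives $\Gamma, \chi \Proves \phi$. Symmetrically, the third premise gives $\Gamma, \chi \Proves \psi$. Two applications of $\And$-introduction, together with the assumption sequent $\Gamma, \chi \Proves \chi$, then yield $\Gamma, \chi \Proves (\phi \And \psi) \And \chi$.

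Next I would propagate the first premise. From $\Gamma \Proves \Not(\phi \And \psi)$, Proposition~\ref{2.9} gives $\Gamma \Proves \Not((\phi \And \psi) \And \chi)$. The crux is to move this negated conjunction past $\chi$, that is, to establish that the rule
\begin{equation*}
\drv{\Gamma \Proves \Not(\xi \And \chi)}{\Gamma, \chi \Proves \Not(\xi \And \chi)}
\end{equation*}
is derivable: indeed, Lemma~\ref{2.7}(3) furnishes the derivable sequent $\Gamma, \Not(\xi \And \chi), \chi \Proves \Not(\xi \And \chi)$, and the generalized cut rule of Theorem~\ref{2.6} eliminates the middle formula using the hypothesis $\Gamma \Proves \Not(\xi \And \chi)$. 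Taking $\xi = \phi \And \psi$ gives $\Gamma, \chi \Proves \Not((\phi \And \psi) \And \chi)$. Now Proposition~\ref{2.2}, applied to $\Gamma, \chi \Proves (\phi \And \psi) \And \chi$ and $\Gamma, \chi \Proves \Not((\phi \And \psi) \And \chi)$, yields $\Gamma, \chi \Proves \Not\chi$, and Proposition~\ref{2.4} completes the derivation.

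The main obstacle is exactly what forces the detour through $\Not((\phi \And \psi) \And \chi)$ rather than the more obvious $\Not(\phi \And \psi)$: simply appending $\chi$ to the antecedent of $\Gamma \Proves \Not(\phi \And \psi)$ is neither sound nor derivable, since verifying $\chi$ may destroy $\Not(\phi \And \psi)$. Absorbing $\chi$ into the formula first is what makes the propagation legitimate, and recognizing this, together with the observation that Proposition~\ref{4.2} converts the two ``compatibility'' premises into the positive facts $\Gamma, \chi \Proves \phi$ and $\Gamma, \chi \Proves \psi$, is the key idea; the remaining manipulations are routine applications of $\And$-introduction and the generalized structural rules of Theorem~\ref{2.6}.
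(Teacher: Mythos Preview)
Your proof is correct, and it shares the paper's key first move: applying Proposition~\ref{4.2} to each of the side premises to obtain $\Gamma,\chi\Proves\phi$ and $\Gamma,\chi\Proves\psi$, and hence $\Gamma,\chi\Proves\phi\And\psi$.

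Where you diverge is in how you combine this with the first premise. You absorb $\chi$ into the conjunction, pass to $\Not((\phi\And\psi)\And\chi)$ via Proposition~\ref{2.9}, and use Lemma~\ref{2.7}(3) plus generalized cut to push this past $\chi$, finishing with Propositions~\ref{2.2} and~\ref{2.4}. The paper instead avoids the detour entirely: from $\Gamma\Proves\Not(\phi\And\psi)$ and $\Gamma,\chi\Proves\phi\And\psi$ the generalized paste rule gives $\Gamma,\Not(\phi\And\psi),\chi\Proves\phi\And\psi$, hence $\Gamma,\Not(\phi\And\psi),\chi\Proves\Not\Not(\phi\And\psi)$, and a \emph{second} application of Proposition~\ref{4.2} (with $\Not(\phi\And\psi)$ in the role of $\phi$ and $\chi$ in the role of $\psi$) yields $\Gamma,\Not(\phi\And\psi)\Proves\Not\chi$; a final cut finishes. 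So your concern that ``appending $\chi$ is not derivable'' is well founded, but the paper sidesteps it by pasting $\Not(\phi\And\psi)$ \emph{before} $\chi$ rather than trying to weaken after it. Your route works and is conceptually clean; the paper's is shorter and reuses Proposition~\ref{4.2} rather than invoking Lemma~\ref{2.7}.
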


\begin{proof}
Refer to Propositions \ref{2.4} and \ref{4.2} and Theorem~\ref{2.6}.
\begin{equation*}
\drv
{\Gamma \Proves \Not (\phi \And \psi)
&\drv
{\drv
{\drv
{\Gamma \Proves \Not (\phi \And \psi)
& \drv
{\drv
{\drv
{\Gamma, \chi, \Not \phi \Proves \Not \chi}
{\Gamma, \chi, \Proves \Not \Not \phi}}
{\Gamma, \chi \Proves \phi}
&\drv
{\drv
{\Gamma, \chi, \Not \psi \Proves \Not \chi}
{\Gamma, \chi \Proves \Not \Not \psi}}
{\Gamma, \chi \Proves \psi}}
{\Gamma, \chi \Proves \phi \And \psi}}
{\Gamma, \Not ( \phi \And \psi), \chi \Proves \phi \And \psi}}
{\Gamma, \Not ( \phi \And \psi), \chi \Proves \Not \Not(\phi \And \psi)}}
{\Gamma, \Not (\phi \And \psi) \Proves \Not \chi}}
{\Gamma \Proves \Not \chi.}
\end{equation*}
\end{proof}

\begin{theorem}\label{4.4}
The following inference rule is derivable:
\begin{equation*}
\begin{aligned}
\drv
{\Gamma \Proves \psi & \Gamma, \phi \Proves \psi}
{\Gamma, \Not \phi \Proves \psi}
\end{aligned}\;.
\end{equation*}
\end{theorem}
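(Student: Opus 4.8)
The plan is to reduce the theorem to the case in which the first premise is realized syntactically. First, note that $\Gamma, \Not \phi \Proves \psi$ and $\Gamma \Proves \Not \phi \Implies \psi$ are interderivable via the $\Implies$-introduction and $\Implies$-elimination rules, so it suffices to derive $\Gamma \Proves \Not \phi \Implies \psi$. From the second premise $\Gamma, \phi \Proves \psi$, the $\Implies$-introduction rule yields $\Gamma \Proves \phi \Implies \psi$, and combining this with the first premise $\Gamma \Proves \psi$ by the $\And$-introduction rule gives $\Gamma \Proves \psi \And (\phi \Implies \psi)$. By the cut rule it now suffices to establish the single auxiliary sequent
\begin{equation*}
\Gamma, \psi \And (\phi \Implies \psi) \Proves \Not \phi \Implies \psi,
\end{equation*}
since cutting $\Gamma \Proves \psi \And (\phi \Implies \psi)$ against it produces $\Gamma \Proves \Not \phi \Implies \psi$, whence $\Gamma, \Not \phi \Proves \psi$ by $\Implies$-elimination.

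The auxiliary sequent is the heart of the matter; semantically it records the orthomodular inequality $\psi \And (\phi \Implies \psi) \leq \Not \phi \Implies \psi$, which one could verify by evaluating in $2$ and $MO2$ were the methods of Section~\ref{S3} available, but which must here be derived inside $NOM$. Applying the $\Implies$-introduction rule, the auxiliary sequent reduces to $\Gamma, \psi \And (\phi \Implies \psi), \Not \phi \Proves \psi$. The key observation is that $\psi \And (\phi \Implies \psi)$ is compatible with $\phi$; indeed its meet with $\phi$ is $\phi \And \psi$, its meet with $\Not \phi$ is $\Not \phi \And \psi$, and these exhibit the decomposition $\psi \And (\phi \Implies \psi) = (\phi \And \psi) \Or (\Not \phi \And \psi)$, each disjunct being compatible with $\phi$. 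Consequently $\psi \And (\phi \Implies \psi)$ ought to be exchangeable with $\Not \phi$ in the antecedent, and once it stands to the right of $\Not \phi$ the sequent $\Gamma, \Not \phi, \psi \And (\phi \Implies \psi) \Proves \psi$ follows immediately from the assumption rule and the left $\And$-elimination rule.

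Thus the work concentrates in the compatibility step, which I would carry out using the compatible exchange rule of $NOM$ together with the two ``persistence'' sequents $\Gamma, \psi \And (\phi \Implies \psi), \Not \phi \Proves \psi \And (\phi \Implies \psi)$ and $\Gamma, \Not \phi, \psi \And (\phi \Implies \psi) \Proves \Not \phi$; establishing these in turn uses the unfolding of $\phi \Implies \psi$ as $\Not(\phi \And \Not(\phi \And \psi))$ from Theorem~\ref{2.10} and the exchange and weakening-of-conjunction lemmas, namely Lemma~\ref{2.7} and Propositions~\ref{2.8} and~\ref{2.9} (with Proposition~\ref{2.4} handling double negations). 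I expect this bookkeeping to be the main obstacle: the antecedent of a $NOM$ sequent behaves like a Sasaki product, so one must track exactly which antecedent formulas persist past $\Not \phi$, and most natural choices of intermediate formula collapse back to the original goal. The reductions above are arranged precisely so that the only formula whose persistence past $\Not \phi$ is required is $\psi \And (\phi \Implies \psi)$, which does persist because it commutes with $\phi$.
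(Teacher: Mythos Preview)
Your reduction to the auxiliary sequent $\Gamma,\psi\And(\phi\Implies\psi),\Not\phi\Proves\psi$ is sound, but the step you label ``bookkeeping'' is in fact circular. To apply the compatible exchange rule you need the persistence sequent $\Gamma,\psi\And(\phi\Implies\psi),\Not\phi\Proves\psi\And(\phi\Implies\psi)$, and by $\And$-introduction this requires $\Gamma,\psi\And(\phi\Implies\psi),\Not\phi\Proves\psi$. But that is exactly the auxiliary sequent you are trying to establish: writing $\Gamma'=\Gamma,\psi\And(\phi\Implies\psi)$, you have $\Gamma'\Proves\psi$ and $\Gamma',\phi\Proves\psi$ (both by $\And$-elimination and $\Implies$-elimination from the last antecedent), and you need $\Gamma',\Not\phi\Proves\psi$ --- an instance of the very rule being proved. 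The tools you cite from Section~\ref{S2} (Lemma~\ref{2.7}, Propositions~\ref{2.8} and~\ref{2.9}, Theorem~\ref{2.10}) let you commute $\phi$ or $\psi$ past $\Not(\phi\And\psi)$, but they give no purchase on commuting the compound $\psi\And(\phi\Implies\psi)$ past $\Not\phi$ directly; your semantic observation that $\psi\And(\phi\Implies\psi)=(\phi\And\psi)\Or(\Not\phi\And\psi)$ is correct, but deriving that equivalence syntactically already presupposes a form of Theorem~\ref{4.4}.

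The paper avoids this trap by a different decomposition: it case-splits on $\phi\And\psi$ versus $\Not(\phi\And\psi)$ using the excluded middle rule, and handles the $\Not(\phi\And\psi)$ case via Lemma~\ref{4.3}, which you do not cite. Lemma~\ref{4.3} is precisely the missing device: it converts the hypothesis $\Gamma,\phi\Proves\psi$ into $\Gamma,\Not(\phi\And\psi)\Proves\Not\phi$, and then the hypothesis $\Gamma\Proves\psi$ together with the known commutation of $\psi$ with $\Not(\phi\And\psi)$ (Proposition~\ref{2.8}) gives $\Gamma,\Not(\phi\And\psi)\Proves\psi$, whence $\Gamma,\Not(\phi\And\psi)\Proves\Not\phi\And\psi$, and the rest follows via Proposition~\ref{2.9} and Theorem~\ref{2.10}. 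If you want to salvage your route, you will need an analogue of Lemma~\ref{4.3} to break the circularity.
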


\begin{proof}
Refer to Theorems \ref{2.6} and \ref{2.10}, Propositions \ref{2.8} and \ref{2.9}, and Lemma~\ref{4.3}.

\begin{equation*}
\drv
{\drv
{}
{\Gamma, \Not (\phi \And \psi) \Proves \Not (\phi \And \psi)}
&\drv
{}
{\Gamma,\Not (\phi \And \psi), \phi, \Not \phi \Proves \Not \phi}
&
\drv
{\drv
{\Gamma, \phi \Proves \psi
&\drv
{\drv
{}
{\Gamma, \phi, \Not (\phi \And \psi), \psi, \Not \psi \Proves \Not \phi}}
{\Gamma, \phi, \psi, \Not (\phi \And \psi), \Not \psi \Proves \Not \phi}}
{\Gamma, \phi,\Not (\phi \And \psi), \Not \psi \Proves \Not \phi}}
{\Gamma,\Not (\phi \And \psi), \phi, \Not \psi \Proves \Not \phi}}
{\Gamma, \Not (\phi \And \psi) \Proves \Not \phi;}
\end{equation*}

\begin{equation*}
\drv
{\drv
{\drv
{\drv
{\drv
{}
{\Gamma, \phi \And \psi \Proves \phi}}
{\Gamma, \phi \And \psi \Proves \Not (\Not \phi \And \Not (\Not \phi \And \psi)) }
&\drv
{\drv
{\drv
{\Gamma, \phi \Proves \psi}
{\Gamma, \Not(\phi \And \psi) \Proves \Not \phi}
&\drv
{\Gamma \Proves \psi
&\drv
{\drv
{}
{\Gamma, \Not (\phi \And \psi), \psi \Proves \psi}}
{\Gamma, \psi, \Not(\phi \And \psi) \Proves \psi}}
{\Gamma, \Not (\phi \And \psi) \Proves \psi}}
{\Gamma, \Not (\phi \And \psi) \Proves \Not \phi \And \psi}}
{\Gamma, \Not (\phi \And \psi) \Proves \Not (\Not \phi \And \Not (\Not \phi \And \psi))}}
{\Gamma \Proves \Not (\Not \phi \And \Not (\Not \phi \And \psi))}}
{\Gamma \Proves \Not \phi \Implies \psi}}
{\Gamma, \Not \phi \Proves \psi.}
\end{equation*}
\end{proof}

\begin{corollary}\label{4.5}
The following inference rules are derivable:
\begin{equation*}
\begin{aligned}
\drv
{\Gamma, \phi, \psi \Proves \phi}
{\Gamma, \phi, \Not \psi \Proves \phi}
\end{aligned}\;, \qquad \qquad
\begin{aligned}
\drv
{\Gamma, \phi, \psi \Proves \phi & \Gamma, \psi, \phi \Proves \psi}
{\Gamma, \Not \phi, \psi \Proves \Not \phi}
\end{aligned}\;.
\end{equation*}
\end{corollary}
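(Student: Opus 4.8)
The plan is to get the first rule directly from Theorem~\ref{4.4} and to obtain the second rule by combining Theorem~\ref{4.4} with a case analysis modelled on the proof of Theorem~\ref{4.4} itself. For the first rule I would instantiate Theorem~\ref{4.4}, taking its ``$\Gamma$'' to be $\Gamma,\phi$, its split formula to be $\psi$, and its succedent to be $\phi$: then its first premise $\Gamma,\phi\Proves\phi$ is an instance of the assumption rule, its second premise $\Gamma,\phi,\psi\Proves\phi$ is the hypothesis, and its conclusion is exactly $\Gamma,\phi,\Not\psi\Proves\phi$.

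For the second rule I would first reduce it to deriving $\Gamma,\Not\phi,\Not\psi\Proves\Not\phi$ from the two hypotheses. Indeed, applying Theorem~\ref{4.4} with ``$\Gamma$'' equal to $\Gamma,\Not\phi$, split formula $\Not\psi$, and succedent $\Not\phi$ turns $\Gamma,\Not\phi,\Not\psi\Proves\Not\phi$ into $\Gamma,\Not\phi,\Not\Not\psi\Proves\Not\phi$ (its other premise $\Gamma,\Not\phi\Proves\Not\phi$ being the assumption rule), after which the generalized double-negation rule of Figure~\ref{F3} rewrites $\Not\Not\psi$ to $\psi$ in the antecedent, yielding $\Gamma,\Not\phi,\psi\Proves\Not\phi$. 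To derive $\Gamma,\Not\phi,\Not\psi\Proves\Not\phi$ I would case-split on $\phi\And\psi$, exactly as in the proof of Theorem~\ref{4.4}: the generalized excluded-middle rule reduces it to $\Gamma,\Not\phi,\Not\psi,\phi\And\psi\Proves\Not\phi$ and $\Gamma,\Not\phi,\Not\psi,\Not(\phi\And\psi)\Proves\Not\phi$. The first sequent is immediate without the hypotheses, since $\phi\And\psi$ in the antecedent yields $\psi$ by $\And$-elimination while Lemma~\ref{2.7}(2) yields $\Not\psi$, so Proposition~\ref{2.2} closes this branch. The second sequent is where the hypotheses enter: one applies the already-established first rule to the hypothesis $\Gamma,\psi,\phi\Proves\psi$ to get $\Gamma,\psi,\Not\phi\Proves\psi$, and then, using Lemma~\ref{4.3} together with Propositions~\ref{2.8} and \ref{2.9}, the compatible exchange rule, and the generalized rules of Figure~\ref{F3}, one shuffles $\Not(\phi\And\psi)$ through the antecedent and feeds in the two hypotheses; the useful facts here are that $\Not\phi\Proves\Not(\phi\And\psi)$ by Proposition~\ref{2.9} and that $\Not(\phi\And\psi)$ ``survives'' $\psi$ by Lemma~\ref{2.7}(3), so these can be combined via the generalized cut rule to relate the $\Not(\phi\And\psi)$-sequents back to the hypotheses.

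The main obstacle is the noncommutativity of the antecedent: one cannot ``carry $\Not\phi$ past $\psi$'' for free, since that inference is precisely the content of the second rule, so every apparently direct attack (a stray application of the paste rule, compatible exchange, or excluded middle on $\phi$ alone) is circular, and the derivation is forced through a genuine case split on a composite formula and a detour via negations and the Sasaki-arrow form of implication. Keeping the order of the antecedent formulas straight in the $\Not(\phi\And\psi)$ branch, and arranging for both hypotheses to be consumed there, is the delicate part of the argument.
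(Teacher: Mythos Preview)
Your derivation of the first rule is exactly the paper's: instantiate Theorem~\ref{4.4} with context $\Gamma,\phi$, split formula $\psi$, succedent $\phi$.

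For the second rule, however, your plan has a genuine gap. The reduction to $\Gamma,\Not\phi,\Not\psi\Proves\Not\phi$ via Theorem~\ref{4.4} and double negation is valid, but the subsequent case split on $\phi\And\psi$ placed \emph{after} $\Not\phi,\Not\psi$ buys you nothing: since $\Gamma,\Not\phi,\Not\psi\Proves\Not(\phi\And\psi)$ already holds (Proposition~\ref{2.9}), your case~(b) sequent $\Gamma,\Not\phi,\Not\psi,\Not(\phi\And\psi)\Proves\Not\phi$ is inter-derivable with the original goal by cut and paste. So the split does not simplify the problem, and your sketch for case~(b) is where it matters. The tools you list do not close it: Proposition~\ref{2.8} only lets $\Not(\phi\And\psi)$ commute with $\phi$ or $\psi$, not with $\Not\phi$ or $\Not\psi$; Lemma~\ref{2.7}(3) likewise concerns $\phi$ and $\psi$; and Lemma~\ref{4.3} requires premises of the shape $\Gamma,\chi,\Not\phi\Proves\Not\chi$, which are not at hand. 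The hypotheses live at the level of $\Gamma$ with $\phi,\psi$ in the antecedent, and there is no mechanism in your outline for transporting them past $\Not\phi,\Not\psi$.

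The paper's argument avoids this by applying Theorem~\ref{4.4} \emph{at the level of $\Gamma$} to the composite formula $\psi\Implies(\phi\Implies\psi)$: from $\Gamma,\psi,\phi\Proves\psi$ one gets $\Gamma\Proves\psi\Implies(\phi\Implies\psi)$ by two $\Implies$-introductions, and from $\Gamma,\phi,\psi\Proves\phi$ together with the assumption rule one gets $\Gamma,\phi,\psi,\phi\Proves\psi$ by paste, hence $\Gamma,\phi\Proves\psi\Implies(\phi\Implies\psi)$. Theorem~\ref{4.4} then yields $\Gamma,\Not\phi\Proves\psi\Implies(\phi\Implies\psi)$, and two $\Implies$-eliminations produce $\Gamma,\Not\phi,\psi,\phi\Proves\psi$ with the antecedent already in the right order. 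Compatible exchange against the explosion sequents $\Gamma,\Not\phi,\phi,\psi\Proves\phi$ and $\Gamma,\Not\phi,\phi,\psi\Proves\Not\phi$ gives $\Gamma,\Not\phi,\psi,\phi\Proves\Not\phi$, and Proposition~\ref{2.4} discharges the trailing $\phi$. The point is that packaging the hypotheses into an implication lets Theorem~\ref{4.4} act before $\psi$ enters the antecedent, so no shuffling past $\Not\phi$ is ever needed.
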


\begin{proof} Refer to Proposition~\ref{2.4} and Theorems \ref{2.6} and \ref{4.4}.

\begin{equation*}
\drv
{\drv
{}
{\Gamma, \phi \Proves \phi}
&\Gamma, \phi, \psi \Proves \phi}
{\Gamma, \phi, \Not \psi \Proves \phi.}
\end{equation*}

\begin{equation*}
\drv
{\drv
{\drv
{}
{\Gamma, \Not \phi, \phi, \psi \Proves \phi}
&\drv
{}
{\Gamma, \Not \phi, \phi, \psi \Proves \Not \phi}
&\drv
{\drv
{\drv
{\drv
{\drv
{\Gamma, \psi, \phi \Proves \psi}
{\Gamma, \psi \Proves \phi \Implies \psi}}
{\Gamma\Proves \psi \Implies (\phi \Implies \psi)}
&\drv
{\drv
{\drv
{\Gamma, \phi, \psi\Proves\phi
&\drv
{}
{\Gamma, \phi, \psi \Proves \psi}}
{\Gamma, \phi, \psi, \phi \Proves \psi}}
{\Gamma, \phi, \psi \Proves \phi \Implies \psi}}
{\Gamma, \phi \Proves \psi \Implies (\phi \Implies \psi)}}
{\Gamma, \Not \phi \Proves \psi \Implies (\phi \Implies \psi)}}
{\Gamma, \Not \phi, \psi \Proves \phi \Implies \psi}}
{\Gamma, \Not \phi, \psi, \phi \Proves \psi}}
{\Gamma, \Not \phi, \psi, \phi \Proves \Not \phi}}
{\Gamma, \Not \phi, \psi \Proves \Not \phi.}
\end{equation*}
\end{proof}

\begin{corollary}\label{4.6}
The following inference rules are derivable:
\begin{equation*}
\begin{aligned}
\drv
{\Gamma \Proves \phi}
{\Gamma \Proves \phi \Or \psi}
\end{aligned}\;,
\qquad \qquad
\begin{aligned}
\drv
{\Gamma \Proves \psi}
{\Gamma \Proves \phi \Or \psi}
\end{aligned}\;,
\end{equation*}

\begin{equation*}
\begin{aligned}
\drv
{\Gamma \Proves \phi \Or \psi & \Gamma, \phi \Proves \chi & \Gamma, \psi \Proves \chi & \Gamma, \chi, \phi \Proves \chi  & \Gamma, \chi, \psi \Proves \chi}
{\Gamma \Proves \chi}
\end{aligned}\;.
\end{equation*}
\end{corollary}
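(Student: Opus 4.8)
The plan is to treat the two introduction rules and the elimination rule separately. The introduction rules are immediate from Proposition~\ref{2.9}: since $\phi \Or \psi$ abbreviates $\Not(\Not\phi \And \Not\psi)$, substituting $\Not\psi$ for $\psi$ in the rule of Proposition~\ref{2.9} that derives $\Gamma \Proves \Not(\Not\phi \And \psi)$ from $\Gamma \Proves \phi$ yields the derivability of $\Gamma \Proves \phi \Or \psi$ from $\Gamma \Proves \phi$, and substituting $\Not\phi$ for $\phi$ in the rule that derives $\Gamma \Proves \Not(\phi \And \Not\psi)$ from $\Gamma \Proves \psi$ yields its derivability from $\Gamma \Proves \psi$.

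For the elimination rule I would reduce to Lemma~\ref{4.3}, applied with $\Not\phi$, $\Not\psi$, and $\Not\chi$ in place of its $\phi$, $\psi$, and $\chi$. Under this substitution the first hypothesis of Lemma~\ref{4.3} reads $\Gamma \Proves \Not(\Not\phi \And \Not\psi)$, which is exactly the first premise $\Gamma \Proves \phi \Or \psi$ of the elimination rule, and the conclusion reads $\Gamma \Proves \Not\Not\chi$, whence $\Gamma \Proves \chi$ by double-negation elimination (Proposition~\ref{2.4}). So it remains only to derive the other two hypotheses of the substituted Lemma~\ref{4.3}, namely $\Gamma, \Not\chi, \Not\Not\phi \Proves \Not\Not\chi$ and $\Gamma, \Not\chi, \Not\Not\psi \Proves \Not\Not\chi$, from the four remaining premises $\Gamma, \phi \Proves \chi$, $\Gamma, \psi \Proves \chi$, $\Gamma, \chi, \phi \Proves \chi$, and $\Gamma, \chi, \psi \Proves \chi$ of the elimination rule.

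I would derive $\Gamma, \Not\chi, \Not\Not\phi \Proves \Not\Not\chi$; the sequent for $\psi$ is symmetric. Replacing $\Not\Not\phi$ in the antecedent by $\phi$ (Lemma~\ref{2.5}) and the succedent $\Not\Not\chi$ by $\chi$ (Proposition~\ref{2.4}), this is interderivable with $\Gamma, \Not\chi, \phi \Proves \chi$. The key observation is that applying the $\Implies$-introduction rule to the premises $\Gamma, \phi \Proves \chi$ and $\Gamma, \chi, \phi \Proves \chi$ produces $\Gamma \Proves \phi \Implies \chi$ and $\Gamma, \chi \Proves \phi \Implies \chi$; Theorem~\ref{4.4} (which lets one assume $\Not\alpha$ once a succedent is known both outright and under the assumption $\alpha$), applied to these two sequents with $\chi$ in the role of its negated formula and $\phi \Implies \chi$ in the role of its common succedent, then yields $\Gamma, \Not\chi \Proves \phi \Implies \chi$, and the $\Implies$-elimination rule yields $\Gamma, \Not\chi, \phi \Proves \chi$.

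The substantive obstacle is exactly this last step, the passage from $\Gamma, \phi \Proves \chi$ and $\Gamma, \chi, \phi \Proves \chi$ to $\Gamma, \Not\chi, \phi \Proves \chi$. One cannot simply weaken $\Gamma, \phi \Proves \chi$ by inserting $\Not\chi$: the weakening of Proposition~\ref{3.1} is not available in this section, and in any case $\Gamma, \Not\chi, \phi \Proves \chi$ is not semantically valid on the strength of $\Gamma, \phi \Proves \chi$ alone, which is precisely why the compatibility premise $\Gamma, \chi, \phi \Proves \chi$ is needed. The maneuver that makes it work is to discharge $\phi$ by $\Implies$-introduction so that $\chi$ becomes the last antecedent formula, apply Theorem~\ref{4.4} with respect to $\chi$, and then reintroduce $\phi$ by $\Implies$-elimination; the disjunction itself is then handled by packaging everything through Lemma~\ref{4.3}. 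Beyond this I expect only routine double-negation bookkeeping to fit the exact form of Lemma~\ref{4.3}'s hypotheses.
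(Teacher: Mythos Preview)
Your proposal is correct and matches the paper's proof essentially step for step: the introduction rules come from Proposition~\ref{2.9}, and the elimination rule is obtained by instantiating Lemma~\ref{4.3} at $\Not\phi$, $\Not\psi$, $\Not\chi$, after first using $\Implies$-introduction, Theorem~\ref{4.4}, and $\Implies$-elimination to pass from $\Gamma,\phi\Proves\chi$ and $\Gamma,\chi,\phi\Proves\chi$ to $\Gamma,\Not\chi,\phi\Proves\chi$, with the remaining double-negation bookkeeping handled exactly as you describe. The only cosmetic difference is that the paper cites Theorem~\ref{2.6} rather than Lemma~\ref{2.5} for the $\Not\Not\phi$ replacement, but since $\Not\Not\phi$ sits in the final antecedent position either reference suffices.
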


\begin{proof}
The two introduction rules are derivable by Proposition~\ref{2.9}. For the derivation of the elimination rule, refer to Proposition~\ref{2.4}, Theorems \ref{2.6} and \ref{4.4}, and Lemma~\ref{4.3}.

\begin{equation*}
\drv
{\drv
{\Gamma \Proves \phi \Or \psi
&\drv
{\drv
{\drv
{\drv
{\drv
{\Gamma, \phi \Proves \chi}
{\Gamma \Proves \phi \Implies \chi}
&\drv
{\Gamma, \chi, \phi \Proves \chi}
{\Gamma, \chi \Proves \phi \Implies \chi}}
{\Gamma, \Not \chi \Proves \phi \Implies \chi}}
{\Gamma, \Not \chi, \phi \Proves \chi}}
{\Gamma, \Not \chi, \Not \Not \phi \Proves \chi}}
{\Gamma, \Not \chi, \Not \Not \phi \Proves \Not \Not \chi}
&\drv
{\drv
{\drv
{\drv
{\drv
{\Gamma, \psi \Proves \chi}
{\Gamma \Proves \psi \Implies \chi}
&\drv
{\Gamma, \chi, \psi \Proves \chi}
{\Gamma, \chi \Proves \psi \Implies \chi}}
{\Gamma, \Not \chi \Proves \psi \Implies \chi}}
{\Gamma, \Not \chi, \psi \Proves \chi}}
{\Gamma, \Not \chi, \Not \Not \psi \Proves \chi}}
{\Gamma, \Not \chi, \Not \Not \psi \Proves \Not \Not \chi}}
{\Gamma \Proves \Not \Not \chi}}
{\Gamma \Proves \chi.}
\end{equation*}
\end{proof}

\begin{proposition}\label{4.7}
The following inference rules are derivable:
\begin{equation*}
\begin{aligned}
\drv
{\Gamma, \phi, \psi \Proves \phi & \Gamma, \psi, \phi \Proves \psi}
{\Gamma \Proves \phi \Comp \psi}
\end{aligned}\;,
\end{equation*}

\begin{equation*}
\begin{aligned}
\drv
{\Gamma \Proves \phi \Comp \psi & \Gamma, \phi, \psi, \Delta \Proves \chi}
{\Gamma, \psi, \phi, \Delta \Proves \chi}
\end{aligned}\;,
\qquad\qquad
\begin{aligned}
\drv
{\Gamma \Proves \phi \Comp \psi & \Gamma, \psi, \phi, \Delta \Proves \chi}
{\Gamma, \phi, \psi, \Delta\Proves \chi}
\end{aligned}\;.
\end{equation*}

\end{proposition}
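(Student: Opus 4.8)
The plan is to reduce all three rules to the $\And$- and $\Implies$-rules of $NOM$ together with the generalized compatible exchange rule established in Theorem~\ref{2.6}, after unfolding the abbreviation $\phi \Comp \psi = (\phi \Implies (\psi \Implies \phi)) \And (\psi \Implies (\phi \Implies \psi))$.

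For the first rule, I would apply the $\And$-introduction rule, so that the goal $\Gamma \Proves \phi \Comp \psi$ splits into the two subgoals $\Gamma \Proves \phi \Implies (\psi \Implies \phi)$ and $\Gamma \Proves \psi \Implies (\phi \Implies \psi)$. Each subgoal follows from the corresponding premise by two applications of the $\Implies$-introduction rule: from $\Gamma, \phi, \psi \Proves \phi$ we obtain $\Gamma, \phi \Proves \psi \Implies \phi$ and then $\Gamma \Proves \phi \Implies (\psi \Implies \phi)$, and symmetrically from $\Gamma, \psi, \phi \Proves \psi$ we obtain $\Gamma \Proves \psi \Implies (\phi \Implies \psi)$.

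For the second and third rules, the key observation is that the premise $\Gamma \Proves \phi \Comp \psi$ encodes exactly the two side conditions needed by compatible exchange. Applying the two $\And$-elimination rules to $\Gamma \Proves \phi \Comp \psi$ yields $\Gamma \Proves \phi \Implies (\psi \Implies \phi)$ and $\Gamma \Proves \psi \Implies (\phi \Implies \psi)$; two applications of the $\Implies$-elimination rule to each then produce $\Gamma, \phi, \psi \Proves \phi$ and $\Gamma, \psi, \phi \Proves \psi$. To derive the second rule, I would feed these two sequents, together with the premise $\Gamma, \phi, \psi, \Delta \Proves \chi$, into the generalized compatible exchange rule of Figure~\ref{F3}, whose conclusion is precisely $\Gamma, \psi, \phi, \Delta \Proves \chi$. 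The third rule is obtained the same way, applying the generalized compatible exchange rule with the roles of $\phi$ and $\psi$ interchanged, this time starting from the premise $\Gamma, \psi, \phi, \Delta \Proves \chi$ and using $\Gamma, \psi, \phi \Proves \psi$ and $\Gamma, \phi, \psi \Proves \phi$ as the two side premises.

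I do not anticipate a substantive obstacle here: once the abbreviation is unfolded, every step is a single application of a primitive rule of $NOM$ or of Theorem~\ref{2.6}. The only care needed is bookkeeping --- checking that the antecedent sequences line up exactly as the compatible exchange rule demands, in particular that the exchanged pair $\phi, \psi$ occupies the correct position relative to $\Gamma$ and $\Delta$ --- but the generalized rules of Figure~\ref{F3} have already been arranged to handle an arbitrary $\Delta$, so this is immediate.
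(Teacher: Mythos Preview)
Your proposal is correct and matches the paper's proof essentially step for step: the introduction rule is obtained by two $\Implies$-introductions on each premise followed by $\And$-introduction, and each elimination rule is obtained by unpacking $\phi \Comp \psi$ via $\And$-elimination and $\Implies$-elimination into $\Gamma, \phi, \psi \Proves \phi$ and $\Gamma, \psi, \phi \Proves \psi$, then applying the generalized compatible exchange rule of Theorem~\ref{2.6}.
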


\begin{proof}
Refer to Theorem~\ref{2.6}.

\begin{equation*}
\drv
{\drv
{\drv
{\Gamma, \phi, \psi \Proves \phi}
{\Gamma, \phi \Proves \psi \Implies \phi}}
{\Gamma \Proves \phi \Implies (\psi \Implies \phi)}
&\drv
{\drv
{\Gamma, \psi, \phi \Proves \psi}
{\Gamma, \psi \Proves \phi \Implies \psi}}
{\Gamma \Proves \psi \Implies (\phi \Implies \psi)}}
{\Gamma \Proves \phi \Comp \psi.}
\end{equation*}

\begin{equation*}\drv
{\drv
{\drv
{\drv
{\Gamma \Proves \phi \Comp \psi}
{\Gamma \Proves \phi \Implies(\psi \Implies \phi)}}
{\Gamma, \phi \Proves \psi \Implies \phi}}
{\Gamma, \phi, \psi \Proves \phi}
&\Gamma, \phi, \psi, \Delta \Proves \chi
&\drv
{\drv
{\drv
{\Gamma \Proves \phi \Comp \psi}
{\Gamma \Proves \psi \Implies(\phi \Implies \psi)}}
{\Gamma, \psi \Proves \phi \Implies \psi}}
{\Gamma, \psi, \phi \Proves \psi}}
{\Gamma, \psi, \phi, \Delta \Proves \chi.}
\end{equation*}

\quad

\noindent The derivation of the third rule is entirely similar to that of the second.
\end{proof}

\begin{proposition}\label{4.8}
The following inference rules are derivable:

\begin{equation*}
\begin{aligned}
\drv
{\Gamma \Proves \phi \Comp \psi}
{\Gamma, \phi, \psi \Proves \phi}
\end{aligned}\;,
\qquad \qquad
\begin{aligned}
\drv
{\Gamma \Proves \phi \Comp \psi}
{\Gamma, \psi, \phi \Proves \psi}
\end{aligned}\;,
\qquad \qquad
\begin{aligned}
\drv
{\Gamma \Proves \phi \Comp \psi}
{\Gamma \Proves \psi \Comp \phi}
\end{aligned}\;,
\end{equation*}

\begin{equation*}
\begin{aligned}
\drv
{\Gamma \Proves \phi \Comp \psi}
{\Gamma \Proves \phi \Comp \Not \psi}
\end{aligned}\;,
\qquad \qquad
\begin{aligned}
\drv
{\Gamma \Proves \phi \Comp \Not \psi}
{\Gamma \Proves \phi \Comp \psi}
\end{aligned}\;,
\qquad \qquad
\begin{aligned}
\drv
{\Gamma \Proves \phi \Comp \psi}
{\Gamma \Proves \Not \phi \Comp \psi}
\end{aligned}\;,
\qquad \qquad
\begin{aligned}
\drv
{\Gamma \Proves \Not \phi \Comp \psi}
{\Gamma \Proves \phi \Comp \psi}
\end{aligned}\;,
\end{equation*}

\begin{equation*}
\begin{aligned}
\drv
{\phantom{\Gamma}}
{\Gamma \Proves \phi \Comp (\phi \Implies \psi)}
\end{aligned}\;,
\qquad \qquad
\begin{aligned}
\drv
{\phantom{\Gamma}}
{\Gamma \Proves \phi \Comp (\phi \And \psi)}
\end{aligned}\;,
\qquad \qquad
\begin{aligned}
\drv
{\phantom{\Gamma}}
{\Gamma \Proves \psi \Comp (\phi \And \psi)}
\end{aligned}\;.
\end{equation*}
\end{proposition}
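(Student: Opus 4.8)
The plan is to derive every one of these rules from the primitive rules for $\Comp$ just established in Proposition~\ref{4.7}, supplemented by Corollary~\ref{4.5}, Lemma~\ref{2.3}, Proposition~\ref{2.4}, and the generalized structural rules of Theorem~\ref{2.6}. The first two rules are immediate: applying the first elimination rule of Proposition~\ref{4.7} with empty suffix to the hypothesis $\Gamma \Proves \phi \Comp \psi$ and to the assumption $\Gamma, \phi, \psi \Proves \psi$ gives $\Gamma, \psi, \phi \Proves \psi$, and symmetrically, using the assumption $\Gamma, \psi, \phi \Proves \phi$, one gets $\Gamma, \phi, \psi \Proves \phi$. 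The third rule, symmetry, then follows by feeding $\Gamma, \phi, \psi \Proves \phi$ and $\Gamma, \psi, \phi \Proves \psi$ back into the introduction rule of Proposition~\ref{4.7} with the two formulas interchanged. For the rule taking $\Gamma \Proves \phi \Comp \psi$ to $\Gamma \Proves \phi \Comp \Not\psi$, start from $\Gamma, \phi, \psi \Proves \phi$, apply the first rule of Corollary~\ref{4.5} to obtain $\Gamma, \phi, \Not\psi \Proves \phi$, and combine this with the assumption $\Gamma, \Not\psi, \phi \Proves \Not\psi$ through the introduction rule of Proposition~\ref{4.7}. Its converse follows by instantiating that rule at $\Not\psi$ in place of $\psi$ and then using the second rule together with the generalized double-negation rules of Theorem~\ref{2.6} and Proposition~\ref{2.4} to pass from $\Gamma, \Not\Not\psi, \phi \Proves \Not\Not\psi$ to $\Gamma, \psi, \phi \Proves \psi$. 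The two rules that negate the first argument of $\Comp$ are then obtained by sandwiching the rules just proved between two applications of the symmetry rule.

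The rules $\Gamma \Proves \phi \Comp (\phi \And \psi)$ and $\Gamma \Proves \psi \Comp (\phi \And \psi)$ are short as well. For the first, invoke the introduction rule of Proposition~\ref{4.7}; its two premises are $\Gamma, \phi, \phi \And \psi \Proves \phi$, which comes from the assumption $\Gamma, \phi, \phi \And \psi \Proves \phi \And \psi$ by left $\And$-elimination, and $\Gamma, \phi \And \psi, \phi \Proves \phi \And \psi$, which comes by $\And$-introduction from the assumption $\Gamma, \phi \And \psi, \phi \Proves \phi$ and from $\Gamma, \phi \And \psi, \phi \Proves \psi$, the latter being the paste rule applied to the two $\And$-eliminations of $\Gamma, \phi \And \psi \Proves \phi \And \psi$. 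The rule for $\psi \Comp (\phi \And \psi)$ is the mirror image.

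The remaining rule $\Gamma \Proves \phi \Comp (\phi \Implies \psi)$ is the main obstacle, and I would handle it in two steps. The first step is the auxiliary compatibility $\Gamma \Proves \Not\phi \Comp (\phi \Implies \psi)$, which ought to hold because $\Not\phi$ already entails $\phi \Implies \psi$. To derive it, apply the introduction rule of Proposition~\ref{4.7} with $\Not\phi$ and $\phi \Implies \psi$ in the two slots; its first premise $\Gamma, \Not\phi, (\phi \Implies \psi) \Proves \Not\phi$ comes from the paste rule applied to the assumption $\Gamma, \Not\phi \Proves \Not\phi$ and to $\Gamma, \Not\phi \Proves \phi \Implies \psi$, and the latter is just $\Implies$-introduction applied to Lemma~\ref{2.3}(1); its second premise $\Gamma, (\phi \Implies \psi), \Not\phi \Proves \phi \Implies \psi$ is likewise $\Implies$-introduction applied to Lemma~\ref{2.3}(1). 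The second step is the rule itself, obtained from the introduction rule of Proposition~\ref{4.7} with $\phi$ and $\phi \Implies \psi$. Its premise $\Gamma, (\phi \Implies \psi), \phi \Proves \phi \Implies \psi$ is routine: from the assumption $\Gamma, (\phi \Implies \psi) \Proves \phi \Implies \psi$ use $\Implies$-elimination and then the duplication rule of Theorem~\ref{2.6} to reach $\Gamma, (\phi \Implies \psi), \phi, \phi \Proves \psi$, and finish with $\Implies$-introduction. Its other premise $\Gamma, \phi, (\phi \Implies \psi) \Proves \phi$ is the crux, and I would prove it by the generalized excluded middle rule of Theorem~\ref{2.6} applied to $\phi$: the positive branch $\Gamma, \phi, (\phi \Implies \psi), \phi \Proves \phi$ is the assumption rule, and the negative branch $\Gamma, \phi, (\phi \Implies \psi), \Not\phi \Proves \phi$ is obtained by commuting $\Not\phi$ leftward across $\phi \Implies \psi$ — from $\Gamma, \phi \Proves \Not\phi \Comp (\phi \Implies \psi)$, which is the auxiliary compatibility read in the context $\Gamma, \phi$, and from the explosion sequent $\Gamma, \phi, \Not\phi, (\phi \Implies \psi) \Proves \phi$ of Lemma~\ref{2.3}(3) generalized by Theorem~\ref{2.6}, the elimination rule of Proposition~\ref{4.7} yields $\Gamma, \phi, (\phi \Implies \psi), \Not\phi \Proves \phi$.

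The one step I expect to require actual thought is recognizing the auxiliary compatibility $\Not\phi \Comp (\phi \Implies \psi)$ and seeing that it is precisely what licenses moving $\Not\phi$ back past $\phi \Implies \psi$ in an antecedent; everything else is a matter of routing the primitive rules of $NOM$ through Proposition~\ref{4.7}, and in particular no appeal to a weakening rule is needed anywhere.
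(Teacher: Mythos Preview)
Your argument is largely sound and follows the paper's approach, but there is one genuine gap. In deriving $\Gamma \Proves \phi \Comp \Not\psi$ from $\Gamma \Proves \phi \Comp \psi$, you call $\Gamma, \Not\psi, \phi \Proves \Not\psi$ ``the assumption''. It is not: the assumption rule only yields sequents of the form $\Gamma', \alpha \Proves \alpha$, so here it gives $\Gamma, \Not\psi, \phi \Proves \phi$, and the whole point of the noncommutative system is that one cannot recover an earlier formula across a later assumption for free. The paper obtains $\Gamma, \Not\psi, \phi \Proves \Not\psi$ from the \emph{second} rule of Corollary~\ref{4.5}, applied with the roles of $\phi$ and $\psi$ swapped: from $\Gamma, \psi, \phi \Proves \psi$ and $\Gamma, \phi, \psi \Proves \phi$ (both of which you have already derived as the first two rules of the proposition) one concludes $\Gamma, \Not\psi, \phi \Proves \Not\psi$. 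With this repair your argument for that rule goes through.

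One stylistic remark: your derivation of $\Gamma \Proves \phi \Comp (\phi \Implies \psi)$ is correct but more elaborate than necessary. Having already established the auxiliary $\Gamma \Proves \Not\phi \Comp (\phi \Implies \psi)$ and having already derived the rule that passes from $\Gamma \Proves \Not\phi \Comp \psi$ to $\Gamma \Proves \phi \Comp \psi$, you may apply the latter directly to get $\Gamma \Proves \phi \Comp (\phi \Implies \psi)$ in one step. This is exactly what the paper does (in the symmetric form: it proves $(\phi \Implies \psi) \Comp \Not\phi$ via the introduction rule, then negates the second argument and applies symmetry), bypassing your detour through excluded middle.
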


\begin{proof}
Refer to Propositions \ref{2.4} and \ref{4.7}, Theorem~\ref{2.6}, and Corollary \ref{4.5}.
\begin{equation*}
\drv
{\Gamma \Proves \phi \Comp \psi
&\drv
{}
{\Gamma, \psi, \phi \Proves \phi}}
{\Gamma, \phi, \psi \Proves \phi;}
\qquad \qquad
\drv
{\Gamma \Proves \phi \Comp \psi
&\drv
{}
{\Gamma, \phi, \psi \Proves \psi}}
{\Gamma, \psi, \phi \Proves \psi.}
\qquad \qquad
\drv
{\drv
{\Gamma \Proves \phi \Comp \psi}
{\Gamma, \psi, \phi \Proves \psi}
&\drv
{\Gamma \Proves \phi \Comp \psi}
{\Gamma, \phi, \psi \Proves \phi}}
{\Gamma \Proves \psi \Comp \phi.}
\end{equation*}
\begin{equation*}
\drv
{\drv
{\drv
{\Gamma \Proves \phi \Comp \psi}
{\Gamma, \phi, \psi \Proves \phi}}
{\Gamma, \phi, \Not \psi \Proves \phi}
&\drv
{\drv
{\Gamma \Proves \phi \Comp \psi}
{\Gamma, \psi, \phi \Proves \psi}
&\drv
{\Gamma \Proves \phi \Comp \psi}
{\Gamma, \phi, \psi \Proves \phi}}
{\Gamma, \Not \psi, \phi \Proves \Not \psi}}
{\Gamma \Proves \phi \Comp \Not \psi;}
\qquad \qquad
\drv
{\drv
{\drv
{\drv
{\Gamma \Proves \phi \Comp \Not \psi}
{\Gamma \Proves \phi \Comp \Not \Not \psi}}
{\Gamma, \phi, \Not \Not \psi \Proves \phi}}
{\Gamma, \phi, \psi \Proves\phi}
&\drv
{\drv
{\drv
{\drv
{\Gamma \Proves \phi \Comp \Not \psi}
{\Gamma \Proves \phi \Comp \Not \Not \psi}}
{\Gamma, \Not \Not \psi, \phi \Proves \Not \Not \psi}}
{\Gamma, \Not \Not \psi, \phi \Proves \psi}}
{\Gamma, \psi, \phi \Proves \psi}}
{\Gamma \Proves \phi \Comp \psi.}
\end{equation*}

\noindent The derivations of the two remaining rules for negation are entirely similar.

\begin{equation*}
\drv
{\drv
{\drv
{\drv
{\drv
{}
{\Gamma, \phi \Implies \psi, \Not \phi, \phi \Proves \psi}}
{\Gamma, \phi \Implies \psi, \Not \phi \Proves \phi \Implies \psi}
&\drv
{\drv
{\drv
{}
{\Gamma, \Not \phi, \phi \Proves \psi}}
{\Gamma, \Not \phi \Proves \phi \Implies \psi}
&\drv
{}
{\Gamma, \Not \phi \Proves \Not \phi}}
{\Gamma, \Not \phi, \phi \Implies \psi \Proves \Not \phi}}
{\Gamma \Proves (\phi \Implies \psi)\Comp \Not \phi}}
{\Gamma \Proves (\phi \Implies \psi)\Comp \phi}}
{\Gamma \Proves \phi \Comp (\phi \Implies \psi).}
\end{equation*}

\begin{equation*}
\drv
{\drv
{\drv
{}
{\Gamma, \phi, \phi \And \psi \Proves \phi \And \psi}}
{\Gamma, \phi, \phi \And \psi \Proves \phi}
&\drv
{\drv
{\drv
{}
{\Gamma, \phi \And \psi \Proves \phi \And \psi}}
{\Gamma, \phi \And \psi \Proves \phi}
&\drv
{}
{\Gamma, \phi \And \psi \Proves \phi \And \psi}}
{\Gamma, \phi \And \psi, \phi \Proves \phi \And \psi}}
{\Gamma \Proves \phi \Comp (\phi \And \psi).}
\end{equation*}
\noindent The derivation of the remaining rule for conjunction is entirely similar.
\end{proof}

\begin{proposition}\label{4.9}
The following inference rule is derivable:
\begin{equation*}
\begin{aligned}
\drv
{\Gamma \Proves \phi \Comp \psi & \Gamma, \phi \Proves \psi}
{\Gamma, \Not \psi \Proves \Not \phi}
\end{aligned}\;.
\end{equation*}
\end{proposition}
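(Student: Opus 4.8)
The plan is to derive $\Gamma, \Not \psi \Proves \Not \phi$ by first producing the two sequents $\Gamma, \Not \psi, \phi \Proves \psi$ and $\Gamma, \Not \psi, \phi \Proves \Not \psi$, and then concluding by the rule of Proposition~\ref{2.4} that infers $\Gamma' \Proves \Not \phi$ from $\Gamma', \phi \Proves \psi$ and $\Gamma', \phi \Proves \Not \psi$, applied with $\Gamma' = \Gamma, \Not \psi$.

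To obtain $\Gamma, \Not \psi, \phi \Proves \psi$, I would start from the hypothesis $\Gamma, \phi \Proves \psi$. The sequent $\Gamma, \phi, \psi, \Not \psi \Proves \psi$ is an instance of one of the generalized rules of Theorem~\ref{2.6} (the generalized deductive explosion rule of Figure~\ref{F3}), so the generalized cut rule of Theorem~\ref{2.6}, applied with cut formula $\psi$, turns $\Gamma, \phi \Proves \psi$ and $\Gamma, \phi, \psi, \Not \psi \Proves \psi$ into $\Gamma, \phi, \Not \psi \Proves \psi$. Now I would bring in the other hypothesis: from $\Gamma \Proves \phi \Comp \psi$, Proposition~\ref{4.8} gives $\Gamma \Proves \phi \Comp \Not \psi$, and then the compatible exchange rule of Proposition~\ref{4.7} converts $\Gamma, \phi, \Not \psi \Proves \psi$ into $\Gamma, \Not \psi, \phi \Proves \psi$.

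To obtain $\Gamma, \Not \psi, \phi \Proves \Not \psi$, I would again use $\Gamma \Proves \phi \Comp \psi$. By Proposition~\ref{4.8}, compatibility is symmetric and is preserved under negating either argument, so $\Gamma \Proves \Not \psi \Comp \phi$; applying the first rule of Proposition~\ref{4.8} to this yields $\Gamma, \Not \psi, \phi \Proves \Not \psi$ directly. Combining the two sequents via Proposition~\ref{2.4} as described then gives $\Gamma, \Not \psi \Proves \Not \phi$.

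The only step that needs any care is deriving $\Gamma, \phi, \Not \psi \Proves \psi$ from $\Gamma, \phi \Proves \psi$, since appending a formula at the end of an antecedent is not admissible in $NOM$ in general; what makes it work here is that the appended formula is $\Not \psi$, so it contradicts $\psi$ and the generalized cut and explosion rules of Theorem~\ref{2.6} close the gap. Likewise, the compatibility hypothesis $\Gamma \Proves \phi \Comp \psi$ is genuinely needed twice, once for the exchange that moves $\Not \psi$ past $\phi$ and once for the sequent $\Gamma, \Not \psi, \phi \Proves \Not \psi$; the remaining manipulations are routine uses of Propositions~\ref{4.7} and \ref{4.8}.
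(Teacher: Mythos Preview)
Your proof is correct. Both your argument and the paper's derive the key sequent $\Gamma, \phi, \Not\psi \Proves \chi$ (for a suitable $\chi$) from the hypothesis $\Gamma,\phi\Proves\psi$ by cutting against the explosion sequent $\Gamma,\phi,\psi,\Not\psi\Proves\chi$ of Theorem~\ref{2.6}, and both exploit the compatibility hypothesis via the negation-stability rules of Proposition~\ref{4.8}.

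The difference is in how the argument is closed off. The paper keeps the antecedent order $\phi,\Not\psi$ throughout: it derives $\Gamma,\phi,\Not\psi\Proves\Not\phi$ directly (taking $\chi=\Not\phi$ above), derives $\Gamma,\Not\phi,\Not\psi\Proves\Not\phi$ from $\Gamma\Proves\Not\phi\Comp\Not\psi$, and then applies the generalized excluded middle rule of Theorem~\ref{2.6} (case split on $\phi$ versus $\Not\phi$ in the slot before $\Not\psi$) to conclude $\Gamma,\Not\psi\Proves\Not\phi$. You instead reorder to $\Not\psi,\phi$ using the compatible exchange of Proposition~\ref{4.7}, produce both $\Gamma,\Not\psi,\phi\Proves\psi$ and $\Gamma,\Not\psi,\phi\Proves\Not\psi$, and finish with the reductio rule of Proposition~\ref{2.4}. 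Your route spends one extra step on the exchange but makes the ``$\phi$ leads to a contradiction under $\Not\psi$'' structure explicit; the paper's route avoids the exchange by working with excluded middle one position to the left. Neither approach is materially simpler than the other.
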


\begin{proof}
Refer to Theorem~\ref{2.6} and Proposition~\ref{4.8}.
\begin{equation*}
\drv
{\drv
{\Gamma, \phi \Proves \psi
&\drv
{}
{\Gamma, \phi, \psi, \Not \psi \Proves \Not \phi}}
{\Gamma, \phi, \Not \psi \Proves \Not \phi}
&\drv
{\drv
{\drv
{\Gamma \Proves \phi \Comp \psi}
{\Gamma \Proves \phi \Comp \Not \psi}}
{\Gamma \Proves \Not \phi \Comp \Not \psi}}
{\Gamma, \Not \phi, \Not \psi \Proves \Not \phi}}
{\Gamma, \Not \psi \Proves \Not \phi.}
\end{equation*}
\end{proof}

\begin{lemma}\label{4.10}
The following inference rules are derivable:
\begin{equation*}
\begin{aligned}
\drv
{\Gamma \Proves \phi \Comp \psi}
{\Gamma, \phi, \psi \Proves \phi \And \psi}
\end{aligned}\;,
\qquad \qquad
\begin{aligned}
\drv
{\Gamma \Proves \phi \Comp \psi}
{\Gamma, \phi, \Not \psi \Proves \phi \And \Not \psi}
\end{aligned}\;,
\end{equation*}

\begin{equation*}
\begin{aligned}
\drv
{\Gamma \Proves \phi \Comp \psi}
{\Gamma, \Not \phi, \psi \Proves \Not \phi \And \psi}
\end{aligned}\;,
\qquad \qquad
\begin{aligned}
\drv
{\Gamma \Proves \phi \Comp \psi}
{\Gamma, \Not \phi, \Not \psi \Proves \Not \phi \And \Not \psi}
\end{aligned}\;.
\end{equation*}
\end{lemma}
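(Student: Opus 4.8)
The plan is to derive the first rule directly and then obtain the remaining three by precomposing it with the negation rules already established in Proposition~\ref{4.8}. For the first rule, observe that $\Gamma, \phi, \psi \Proves \psi$ is an instance of the assumption rule, whereas $\Gamma, \phi, \psi \Proves \phi$ is precisely the conclusion of the first inference rule of Proposition~\ref{4.8} applied to the premise $\Gamma \Proves \phi \Comp \psi$. Feeding both into the $\And$-introduction rule yields the derivation
\begin{equation*}
\drv
{\drv{\Gamma \Proves \phi \Comp \psi}{\Gamma, \phi, \psi \Proves \phi} & \drv{}{\Gamma, \phi, \psi \Proves \psi}}
{\Gamma, \phi, \psi \Proves \phi \And \psi.}
\end{equation*}

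To obtain the second rule, I would first apply the rule carrying $\Gamma \Proves \phi \Comp \psi$ to $\Gamma \Proves \phi \Comp \Not \psi$ from Proposition~\ref{4.8}, and then reuse the first rule of the present lemma with $\psi$ replaced by $\Not \psi$. The third rule is symmetric: apply the rule carrying $\Gamma \Proves \phi \Comp \psi$ to $\Gamma \Proves \Not \phi \Comp \psi$, and then reuse the first rule with $\phi$ replaced by $\Not \phi$. For the fourth rule, apply both negation rules of Proposition~\ref{4.8} in succession to pass from $\Gamma \Proves \phi \Comp \psi$ through $\Gamma \Proves \phi \Comp \Not \psi$ to $\Gamma \Proves \Not \phi \Comp \Not \psi$, and then invoke the first rule with both $\phi$ and $\psi$ negated.

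There is no real obstacle here: the substantive work has already been done in Propositions~\ref{4.7} and~\ref{4.8}, and the only new ingredient is the two-step derivation of the first rule displayed above. The one point worth noting is that a derivable inference rule of $NOM$ remains derivable under arbitrary substitution of formulas for its schematic letters $\phi$, $\psi$, $\chi$ and of finite sequences for $\Gamma$, $\Delta$; this is what licenses reusing the first rule at $\Not \phi$, $\Not \psi$ in the derivations of the other three.
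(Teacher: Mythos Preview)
Your proposal is correct and follows essentially the same route as the paper: the first rule is obtained by combining $\Gamma,\phi,\psi\Proves\phi$ (from Proposition~\ref{4.8}) with the assumption $\Gamma,\phi,\psi\Proves\psi$ via $\And$-introduction, and the remaining three rules are reduced to the first by first passing from $\Gamma\Proves\phi\Comp\psi$ to $\Gamma\Proves\phi\Comp\Not\psi$, $\Gamma\Proves\Not\phi\Comp\psi$, or $\Gamma\Proves\Not\phi\Comp\Not\psi$ using the negation rules of Proposition~\ref{4.8}.
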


\begin{proof}
Refer to Proposition~\ref{4.8}.
\begin{equation*}
\drv
{\drv
{\Gamma \Proves \phi \Comp \psi}
{\Gamma, \phi, \psi \Proves \phi}
&\drv
{}
{\Gamma, \phi, \psi \Proves \psi}}
{\Gamma, \phi, \psi \Proves\phi \And \psi;}
\qquad \qquad
\drv
{\drv
{\drv
{\Gamma \Proves \phi \Comp \psi}
{\Gamma \Proves \phi \Comp \Not \psi}}
{\Gamma, \phi, \Not \psi \Proves \phi}
&\drv
{}
{\Gamma, \phi, \Not \psi \Proves \Not \psi}}
{\Gamma, \phi, \Not \psi \Proves \phi \And \Not \psi;}
\end{equation*}

\begin{equation*}
\drv
{\drv
{\drv
{\Gamma \Proves \phi \Comp \psi}
{\Gamma \Proves \Not \phi \Comp \psi}}
{\Gamma, \Not \phi, \psi \Proves \Not \phi}
&\drv
{}
{\Gamma, \Not \phi, \psi \Proves \psi}}
{\Gamma, \Not \phi, \psi \Proves \Not\phi \And \psi;}
\qquad \qquad
\drv
{\drv
{\drv
{\drv
{\Gamma \Proves \phi \Comp \psi}
{\Gamma \Proves \phi \Comp \Not \psi}}
{\Gamma \Proves \Not \phi \Comp \Not \psi}}
{\Gamma \Proves \Not \phi, \Not \psi \Proves \Not \phi}
&\drv
{}
{\Gamma \Proves \Not \phi, \Not \psi \Proves \Not \psi}}
{\Gamma \Proves \Not \phi, \Not \psi \Proves \Not \phi \And \Not \psi.}
\end{equation*}

\end{proof}

\begin{proposition}\label{4.11}
The following inference rules are derivable:
\begin{equation*}
\begin{aligned}
\drv
{\Gamma \Proves \phi \Comp \psi}
{\Gamma, \phi \Proves (\phi \And \psi) \Or (\phi \And \Not \psi)}
\end{aligned}\;,
\end{equation*}

\begin{equation*}
\begin{aligned}
\drv
{\Gamma \Proves \phi \Comp \psi}
{\Gamma \Proves ((\phi \And \psi) \Or (\phi \And \Not \psi)) \Or ((\Not\phi \And \psi) \Or (\Not\phi \And \Not \psi))}
\end{aligned}\;.
\end{equation*}
\end{proposition}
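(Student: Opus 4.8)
The plan is to establish the two rules in the order stated, using the first rule to derive the second. Both derivations rest on a single idea: perform an excluded-middle case split — on $\psi$ for the first rule, on $\phi$ for the second — and in each case invoke the appropriate instance of Lemma~\ref{4.10} together with the $\Or$-introduction rules of Corollary~\ref{4.6}.

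For the first rule, I would assume $\Gamma \Proves \phi \Comp \psi$ and aim for $\Gamma, \phi \Proves (\phi \And \psi) \Or (\phi \And \Not \psi)$ by case analysis on $\psi$ with $\phi$ already in the antecedent. In the positive case, Lemma~\ref{4.10} gives $\Gamma, \phi, \psi \Proves \phi \And \psi$, and the first $\Or$-introduction rule of Corollary~\ref{4.6} turns this into $\Gamma, \phi, \psi \Proves (\phi \And \psi) \Or (\phi \And \Not \psi)$. In the negative case, Lemma~\ref{4.10} gives $\Gamma, \phi, \Not \psi \Proves \phi \And \Not \psi$, and the second $\Or$-introduction rule yields $\Gamma, \phi, \Not \psi \Proves (\phi \And \psi) \Or (\phi \And \Not \psi)$. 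The (primitive) excluded middle rule, applied with $\Gamma, \phi$ as its antecedent and splitting on $\psi$, then combines the two branches:
\begin{equation*}
\drv
{\drv
{\drv
{\Gamma \Proves \phi \Comp \psi}
{\Gamma, \phi, \psi \Proves \phi \And \psi}}
{\Gamma, \phi, \psi \Proves (\phi \And \psi) \Or (\phi \And \Not \psi)}
&
\drv
{\drv
{\Gamma \Proves \phi \Comp \psi}
{\Gamma, \phi, \Not \psi \Proves \phi \And \Not \psi}}
{\Gamma, \phi, \Not \psi \Proves (\phi \And \psi) \Or (\phi \And \Not \psi)}}
{\Gamma, \phi \Proves (\phi \And \psi) \Or (\phi \And \Not \psi).}
\end{equation*}

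For the second rule, I would again assume $\Gamma \Proves \phi \Comp \psi$, abbreviate $A$ for $(\phi \And \psi) \Or (\phi \And \Not \psi)$ and $B$ for $(\Not \phi \And \psi) \Or (\Not \phi \And \Not \psi)$, and prove $\Gamma \Proves A \Or B$ by a case split on $\phi$. In the positive case, the rule just proved gives $\Gamma, \phi \Proves A$, and left $\Or$-introduction gives $\Gamma, \phi \Proves A \Or B$. In the negative case, Proposition~\ref{4.8} converts $\Gamma \Proves \phi \Comp \psi$ into $\Gamma \Proves \Not \phi \Comp \psi$, so instantiating the first rule with $\Not \phi$ in place of $\phi$ yields $\Gamma, \Not \phi \Proves (\Not \phi \And \psi) \Or (\Not \phi \And \Not \psi)$, i.e. $\Gamma, \Not \phi \Proves B$, and right $\Or$-introduction gives $\Gamma, \Not \phi \Proves A \Or B$. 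The excluded middle rule then delivers $\Gamma \Proves A \Or B$.

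I do not expect a genuine obstacle here: the derivations use only Lemma~\ref{4.10}, Corollary~\ref{4.6}, Proposition~\ref{4.8}, and the primitive excluded middle rule. The two points that merely require care are that in the first derivation the excluded middle rule must be applied with $\phi$ already present in the antecedent (so that the split is on $\psi$ relative to $\Gamma, \phi$, not relative to $\Gamma$), and that in the second derivation the first rule is used schematically, instantiated at $\Not \phi$; once these are noted, everything is routine.
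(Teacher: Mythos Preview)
Your proposal is correct and follows essentially the same approach as the paper: both derivations use Lemma~\ref{4.10} and the $\Or$-introduction rules of Corollary~\ref{4.6} in each branch of an excluded-middle case split (on $\psi$ for the first rule, on $\phi$ for the second), with Proposition~\ref{4.8} supplying $\Gamma \Proves \Not\phi \Comp \psi$ in the negative branch of the second derivation.
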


\begin{proof}
Refer to Corollary \ref{4.6}, Proposition~\ref{4.8}, and Lemma~\ref{4.10}.

\begin{equation*}
\drv
{\drv
{\drv
{\Gamma \Proves \phi \Comp \psi}
{\Gamma, \phi, \psi \Proves \phi \And \psi}}
{\Gamma, \phi,\psi \Proves (\phi \And \psi) \Or (\phi \And \Not \psi)}
&\drv
{\drv
{\Gamma \Proves \phi \Comp \psi}
{\Gamma, \phi, \Not \psi \Proves \phi \And \Not \psi}}
{\Gamma, \phi, \Not \psi \Proves (\phi \And \psi) \Or (\phi \And \Not \psi)}}
{\Gamma, \phi \Proves (\phi \And \psi) \Or (\phi \And \Not \psi).}
\end{equation*}

\quad

\noindent Let $\chi_0$ be the formula $((\phi \And \psi) \Or (\phi \And \Not \psi)) \Or ((\Not\phi \And \psi) \Or (\Not\phi \And \Not \psi))$.

\begin{equation*}
\drv{
\drv
{\drv
{\Gamma \Proves \phi \Comp \psi}
{\Gamma, \phi \Proves (\phi \And \psi) \Or (\phi \And \Not \psi)}}
{\Gamma, \phi \Proves \chi_0}
&\drv
{\drv
{\drv
{\Gamma \Proves \phi \Comp \psi}
{\Gamma \Proves \Not \phi \Comp \psi}}
{\Gamma, \Not \phi \Proves (\Not \phi \And \psi) \Or (\Not \phi \And \Not \psi)}}
{\Gamma, \Not \phi \Proves \chi_0}}
{\Gamma \Proves \chi_0.}
\end{equation*}
\end{proof}

\begin{lemma}\label{4.12}
The following inference rules are derivable:
\begin{equation*}
\begin{aligned}
\drv
{\Gamma \Proves \phi \And \psi}
{\Gamma \Proves \phi \Comp \psi}
\end{aligned}\;,
\qquad \qquad
\begin{aligned}
\drv
{\Gamma \Proves \phi \And \Not \psi}
{\Gamma \Proves \phi \Comp \psi}
\end{aligned}\;,
\end{equation*}

\begin{equation*}
\begin{aligned}
\drv
{\Gamma\Proves \Not \phi \And \psi}
{\Gamma \Proves \phi \Comp \psi}
\end{aligned}\;,
\qquad \qquad
\begin{aligned}
\drv
{\Gamma \Proves \Not \phi \And \Not \psi}
{\Gamma \Proves \phi \Comp \psi}
\end{aligned}\;.
\end{equation*}
Consequently, the following sequents are derivable:
\begin{enumerate}
\item $\Gamma, \phi \And \psi \Proves \phi \Comp \psi$,
\item $\Gamma, \phi \And \Not \psi \Proves \phi \Comp \psi$,
\item $\Gamma, \Not \phi \And \psi \Proves \phi \Comp \psi$,
\item $\Gamma, \Not \phi \And \Not \psi \Proves \phi \Comp \psi$.
\end{enumerate}
\end{lemma}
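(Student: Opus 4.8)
The plan is to establish the four inference rules first; the four ``consequently'' sequents then follow at once, since instantiating the first coordinate of each rule as $\Gamma, \phi \And \psi$ (respectively $\Gamma, \phi \And \Not\psi$, $\Gamma, \Not\phi \And \psi$, $\Gamma, \Not\phi \And \Not\psi$) and supplying the corresponding instance of the assumption rule as the premise produces exactly sequents (1)--(4).

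For the first rule I would reduce to Proposition~\ref{4.7}, whose two premises are $\Gamma, \phi, \psi \Proves \phi$ and $\Gamma, \psi, \phi \Proves \psi$. From the hypothesis $\Gamma \Proves \phi \And \psi$, the two $\And$-elimination rules give $\Gamma \Proves \phi$ and $\Gamma \Proves \psi$. The paste rule applied to these gives $\Gamma, \phi \Proves \psi$, and the paste rule applied again---to $\Gamma, \phi \Proves \psi$ and the assumption-rule sequent $\Gamma, \phi \Proves \phi$---gives $\Gamma, \phi, \psi \Proves \phi$:
\begin{equation*}
\drv
{\drv
{\drv{\Gamma \Proves \phi \And \psi}{\Gamma \Proves \phi}
&\drv{\Gamma \Proves \phi \And \psi}{\Gamma \Proves \psi}}
{\Gamma, \phi \Proves \psi}
&\drv{}{\Gamma, \phi \Proves \phi}}
{\Gamma, \phi, \psi \Proves \phi.}
\end{equation*}
Symmetrically, the paste rule applied to $\Gamma \Proves \psi$ and $\Gamma \Proves \phi$ gives $\Gamma, \psi \Proves \phi$, and then the paste rule applied to $\Gamma, \psi \Proves \phi$ and $\Gamma, \psi \Proves \psi$ gives $\Gamma, \psi, \phi \Proves \psi$. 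Proposition~\ref{4.7} now yields $\Gamma \Proves \phi \Comp \psi$, which is the first rule.

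For the remaining three rules I would apply the first rule under the obvious substitution and then adjust signs using the negation rules of Proposition~\ref{4.8}. From $\Gamma \Proves \phi \And \Not\psi$, the first rule (with $\psi$ read as $\Not\psi$) gives $\Gamma \Proves \phi \Comp \Not\psi$, and the rule of Proposition~\ref{4.8} passing from $\Gamma \Proves \phi \Comp \Not\psi$ to $\Gamma \Proves \phi \Comp \psi$ concludes. From $\Gamma \Proves \Not\phi \And \psi$, the first rule (with $\phi$ read as $\Not\phi$) gives $\Gamma \Proves \Not\phi \Comp \psi$, and the rule passing from $\Gamma \Proves \Not\phi \Comp \psi$ to $\Gamma \Proves \phi \Comp \psi$ concludes. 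From $\Gamma \Proves \Not\phi \And \Not\psi$, the first rule gives $\Gamma \Proves \Not\phi \Comp \Not\psi$; stripping the $\Not$ from the second coordinate yields $\Gamma \Proves \Not\phi \Comp \psi$, and then stripping the $\Not$ from the first coordinate yields $\Gamma \Proves \phi \Comp \psi$.

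I do not expect a genuine obstacle. The only thing to watch is the bookkeeping for the paste rule---which end of the antecedent it lengthens, together with the fact that it is precisely the device that converts ``$\Gamma \Proves \chi$ and $\Gamma \Proves \theta$'' into ``$\Gamma, \chi \Proves \theta$''. Every ingredient used---the $\And$-elimination rules, the paste and assumption rules, Proposition~\ref{4.7}, and the sign-manipulation rules of Proposition~\ref{4.8}---is available before Lemma~\ref{4.12}, so no circularity arises.
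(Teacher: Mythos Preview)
Your proof is correct and follows essentially the same route as the paper: derive $\Gamma,\phi,\psi\Proves\phi$ and $\Gamma,\psi,\phi\Proves\psi$ and invoke Proposition~\ref{4.7} for the first rule, then obtain the remaining three by substituting and stripping negations via Proposition~\ref{4.8}. Your derivation of $\Gamma,\phi\Proves\psi$ is in fact slightly more direct than the paper's---you apply $\And$-elimination at the level of $\Gamma$ and then paste, whereas the paper applies $\And$-elimination at $\Gamma,\phi\And\psi$, pastes there, and then removes $\phi\And\psi$ with the generalized cut of Theorem~\ref{2.6}---but this is a cosmetic difference.
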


\begin{proof}
Refer to Propositions \ref{4.7} and \ref{4.8}.
\begin{equation*}
\drv
{\drv
{\Gamma \Proves \phi \And \psi
&\drv
{\drv
{\drv
{}
{\Gamma, \phi \And \psi \Proves \phi \And \psi}}
{\Gamma, \phi \And \psi \Proves \phi}
&\
\drv
{\drv
{}
{\Gamma, \phi \And \psi \Proves \phi \And \psi}}
{\Gamma, \phi \And \psi \Proves \psi}}
{\Gamma, \phi \And \psi, \phi \Proves \psi}}
{\Gamma, \phi \Proves \psi}
&\drv
{}
{\Gamma, \phi \Proves \phi}}
{\Gamma, \phi, \psi \Proves \phi.}
\end{equation*}
The derivation of the rule \;$\begin{aligned}\drv{\Gamma \Proves \phi \And \psi}{\Gamma, \psi, \phi \Proves \psi} \end{aligned}$\; is entirely similar.
\begin{equation*}
\drv
{\drv
{\Gamma \Proves \phi \And \psi}
{\Gamma, \phi, \psi \Proves \phi}
&\drv
{\Gamma \Proves \phi \And \psi}
{\Gamma, \psi, \phi \Proves \psi}}
{\Gamma \Proves \phi \Comp \psi;}
\qquad \qquad
\drv
{\drv
{\Gamma \Proves \phi \And \Not \psi}
{\Gamma \Proves \phi \Comp \Not \psi}}
{\Gamma \Proves \phi \Comp \psi;}
\qquad \qquad
\drv
{\drv
{\Gamma \Proves \Not \phi \And \psi}
{\Gamma \Proves \Not \phi \Comp \psi }}
{\Gamma \Proves \phi \Comp \psi;}
\qquad \qquad
\drv
{\drv
{\drv
{\Gamma \Proves \Not \phi \And \Not \psi}
{\Gamma \Proves \Not \phi \Comp \Not \psi }}
{\Gamma \Proves \phi \Comp \Not \psi}}
{\Gamma \Proves \phi \Comp \psi.}
\end{equation*}
\end{proof}

\begin{lemma}\label{4.13}
The following inference rule is derivable:
\begin{equation*}
\begin{aligned}
\drv
{\Gamma \Proves (\phi \And \psi) \Or (\phi \And \Not \psi)}
{\Gamma \Proves \phi \Comp \psi}
\end{aligned}\;.
\end{equation*}
Consequently, the following sequents are derivable:
\begin{enumerate}
\item $\Gamma, (\phi \And \psi) \Or (\phi \And \Not \psi) \Proves \phi \Comp \psi$ ,
\item $\Gamma, (\Not \phi \And \psi) \Or (\Not \phi \And \Not \psi) \Proves \phi \Comp \psi$.
\end{enumerate}
\end{lemma}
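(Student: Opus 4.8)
The plan is to obtain the displayed rule by one application of the $\Or$-elimination rule of Corollary~\ref{4.6}, with the two disjuncts taken to be $\phi \And \psi$ and $\phi \And \Not\psi$ and with the target formula of that rule taken to be $\phi \Comp \psi$. That rule has five hypotheses. The first is $\Gamma \Proves (\phi \And \psi) \Or (\phi \And \Not\psi)$, which is exactly the hypothesis of the rule we are deriving. The next two are $\Gamma, \phi \And \psi \Proves \phi \Comp \psi$ and $\Gamma, \phi \And \Not\psi \Proves \phi \Comp \psi$, which are items (1) and~(2) among the sequents of Lemma~\ref{4.12}. The last two are $\Gamma, \phi \Comp \psi, \phi \And \psi \Proves \phi \Comp \psi$ and $\Gamma, \phi \Comp \psi, \phi \And \Not\psi \Proves \phi \Comp \psi$; these are again items (1) and~(2) of Lemma~\ref{4.12}, now instantiated with $\Gamma$ replaced by the longer context $\Gamma, \phi \Comp \psi$. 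With all five hypotheses in hand, Corollary~\ref{4.6} delivers $\Gamma \Proves \phi \Comp \psi$.

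The two consequent sequents follow by specialization. For item~(1), I would instantiate the rule just derived by replacing $\Gamma$ with $\Gamma, (\phi \And \psi) \Or (\phi \And \Not\psi)$: its single hypothesis then becomes an instance of the assumption rule, and its conclusion is $\Gamma, (\phi \And \psi) \Or (\phi \And \Not\psi) \Proves \phi \Comp \psi$. For item~(2), I would instead use the $\Not\phi$-instance of the rule (legitimate because the rule holds for arbitrary formulas), replacing $\Gamma$ with $\Gamma, (\Not\phi \And \psi) \Or (\Not\phi \And \Not\psi)$, to get $\Gamma, (\Not\phi \And \psi) \Or (\Not\phi \And \Not\psi) \Proves \Not\phi \Comp \psi$; then the rule of Proposition~\ref{4.8} that passes from $\Gamma \Proves \Not\phi \Comp \psi$ to $\Gamma \Proves \phi \Comp \psi$ converts this into $\Gamma, (\Not\phi \And \psi) \Or (\Not\phi \And \Not\psi) \Proves \phi \Comp \psi$.

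I do not anticipate a real obstacle, since the whole argument is a short composition of Corollary~\ref{4.6}, Lemma~\ref{4.12}, and Proposition~\ref{4.8}. The one point that warrants care is that the two compatibility hypotheses of the $\Or$-elimination rule, namely $\Gamma, \phi \Comp \psi, \phi \And \psi \Proves \phi \Comp \psi$ and $\Gamma, \phi \Comp \psi, \phi \And \Not\psi \Proves \phi \Comp \psi$, are \emph{not} instances of the assumption rule --- their succedent $\phi \Comp \psi$ is not the last formula of their antecedent --- so they must be produced from the sequent form of Lemma~\ref{4.12} rather than taken for granted. This is precisely the reason Lemma~\ref{4.12} was recorded both as a family of inference rules and as a family of sequents.
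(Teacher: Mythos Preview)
Your proposal is correct and follows essentially the same approach as the paper: a single application of the $\Or$-elimination rule from Corollary~\ref{4.6} with the five hypotheses supplied by Lemma~\ref{4.12}, followed by the same specializations for sequents (1) and (2), including the use of Proposition~\ref{4.8} to pass from $\Not\phi \Comp \psi$ to $\phi \Comp \psi$. Your explicit remark that the two compatibility premises are not assumption-rule instances but must come from Lemma~\ref{4.12} with the extended context $\Gamma, \phi \Comp \psi$ is exactly right.
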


\begin{proof}
Refer to Corollary \ref{4.6}, Proposition~\ref{4.8} and Lemma~\ref{4.12}. Let $\xi_1$ be the formula $\phi \And \psi$, and let $\xi_2$ be the formula $\phi \And \Not \psi$.
\begin{equation*}
\drv
{\Gamma \Proves \xi_1 \Or \xi_2
&
\drv
{}
{\Gamma, \xi_1 \Proves \phi \Comp \psi}
&
\drv
{}
{\Gamma, \xi_2 \Proves \phi \Comp \psi}
&\drv
{}
{\Gamma, \phi \Comp \psi, \xi_1 \Proves \phi \Comp \psi}
&\drv
{}
{\Gamma, \phi \Comp \psi, \xi_2 \Proves \phi \Comp \psi}}
{\Gamma \Proves \phi \Comp \psi.}
\end{equation*}

\noindent It follows that sequent (1) is derivable. So is sequent (2):
\begin{equation*}
\drv{
\drv
{\drv{}
{\Gamma, (\Not \phi \And \psi) \Or (\Not \phi \And \Not \psi) \Proves (\Not \phi \And \psi) \Or (\Not \phi \And \Not \psi)}}
{\Gamma, (\Not \phi \And \psi) \Or (\Not \phi \And \Not \psi) \Proves \Not \phi \Comp  \psi}}
{\Gamma, (\Not \phi \And \psi) \Or (\Not \phi \And \Not \psi) \Proves \phi \Comp  \psi.}
\end{equation*}
\end{proof}

\begin{proposition}\label{4.14}
The following inference rule is derivable:
\begin{equation*}
\begin{aligned}
\drv
{\Gamma \Proves ((\phi \And \psi) \Or (\phi \And \Not \psi)) \Or ((\Not \phi \And \psi) \Or (\Not \phi \And \Not \psi))}
{\Gamma\Proves \phi \Comp \psi}
\end{aligned}\;.
\end{equation*}
\end{proposition}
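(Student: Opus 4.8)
The plan is to apply the $\Or$-elimination rule from Corollary~\ref{4.6} directly, feeding it the compatibility facts already recorded in Lemma~\ref{4.13}. Write $\eta_1$ for the formula $(\phi \And \psi) \Or (\phi \And \Not \psi)$ and $\eta_2$ for the formula $(\Not \phi \And \psi) \Or (\Not \phi \And \Not \psi)$, so that the premise of the rule to be derived is precisely $\Gamma \Proves \eta_1 \Or \eta_2$, and the conclusion is $\Gamma \Proves \phi \Comp \psi$.

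The $\Or$-elimination rule of Corollary~\ref{4.6}, instantiated with the disjuncts $\eta_1$ and $\eta_2$ and with consequent $\phi \Comp \psi$, has five premises: the sequent $\Gamma \Proves \eta_1 \Or \eta_2$ itself; the two sequents $\Gamma, \eta_1 \Proves \phi \Comp \psi$ and $\Gamma, \eta_2 \Proves \phi \Comp \psi$; and the two sequents $\Gamma, \phi \Comp \psi, \eta_1 \Proves \phi \Comp \psi$ and $\Gamma, \phi \Comp \psi, \eta_2 \Proves \phi \Comp \psi$. The first of these is the hypothesis of the rule we are deriving. The second and third are exactly sequents (1) and (2) of Lemma~\ref{4.13}. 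The fourth and fifth are once again sequents (1) and (2) of Lemma~\ref{4.13}, but with the context $\Gamma$ replaced by the longer context $\Gamma, \phi \Comp \psi$; since Lemma~\ref{4.13} is stated for an arbitrary finite sequence of formulas in the antecedent, this reinstantiation is legitimate. Combining these five sequents by a single application of the $\Or$-elimination rule produces $\Gamma \Proves \phi \Comp \psi$, which is the desired conclusion.

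There is no real obstacle: the substantive work was carried out in Lemma~\ref{4.13} and Corollary~\ref{4.6}. The only point meriting attention is the side condition of the $\Or$-elimination rule — the two additional ``compatible'' premises carrying $\phi \Comp \psi$ in the antecedent — and, as just noted, this is dispatched simply by applying Lemma~\ref{4.13} over the enlarged context $\Gamma, \phi \Comp \psi$. If desired, the five-premise inference can be displayed as an explicit derivation tree in the style of the other proofs in this section, but the argument is transparent enough that a narrative presentation suffices.
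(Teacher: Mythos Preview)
Your proposal is correct and matches the paper's proof essentially exactly: the paper also names the two disjuncts ($\zeta_1$ and $\zeta_2$ rather than your $\eta_1$ and $\eta_2$), invokes Lemma~\ref{4.13} for the four side premises (the latter two over the enlarged context $\Gamma, \phi \Comp \psi$), and closes with a single application of the $\Or$-elimination rule from Corollary~\ref{4.6}. The only cosmetic difference is that the paper displays the five-premise inference as an explicit derivation tree.
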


\begin{proof}
Refer to Corollary \ref{4.6} and Lemma~\ref{4.13}. Let $\zeta_1$ be the formula $(\phi \And \psi) \Or (\phi \And \Not \psi)$, and let $\zeta_2$ be the formula $(\Not \phi \And \psi) \Or (\Not \phi \And \Not \psi)$.
\begin{equation*}
\drv
{\Gamma \Proves \zeta_1 \Or \zeta_2
&
\drv
{}
{\Gamma, \zeta_1 \Proves \phi \Comp \psi}
&
\drv
{}
{\Gamma, \zeta_2 \Proves \phi \Comp \psi}
&\drv
{}
{\Gamma, \phi \Comp \psi, \zeta_1 \Proves \phi \Comp \psi}
&\drv
{}
{\Gamma, \phi \Comp \psi, \zeta_2 \Proves \phi \Comp \psi}}
{\Gamma \Proves \phi \Comp \psi.}
\end{equation*}
\end{proof}

\section{Universal and existential quantification}\label{S5}

This section extends the propositional deductive system $NOM$ to two predicate deductive systems. The first is shown to be sound for Takeuti's semantics \cite{Takeuti}, and the second is shown to be sound for a variant of Weaver's semantics \cite{Weaver}. For simplicity, we identify formulas that differ only in the symbols used for their bound variables so that any term is substitutable for any variable in any formula.

Throughout this section, let $\Q$ be a complete orthomodular lattice, and let $V^{(\Q)}$ be the $\Q$-valued universe \cite{Takeuti}\cite{Ozawa5}*{sec.~4.1}.

\begin{definition}\label{5.1}
We define the single-sorted predicate deductive system $NOM_\Q$:
\begin{enumerate}
\item Its formulas are the first-order formulas whose 
\begin{enumerate}
\item connectives are $\And$, $\Implies$, and $\Not$;
\item quantifiers are $\forall$;
\item relation symbols are $=$ and $\in$;
\item constant symbols are the elements of $V^{(\Q)}$, with no other function symbols.
\end{enumerate}
\item Its rules of inference are those in Figure~\ref{F1} together with
\begin{equation*}
\begin{aligned}
\drv
{\Gamma \Proves \phi}
{\Gamma \Proves (\forall x)\phi}
\end{aligned}\;,
\qquad \qquad
\begin{aligned}
\drv
{\Gamma \Proves (\forall x)\phi}
{\Gamma \Proves \phi[x/t]}
\end{aligned}\;,
\end{equation*}
where the former rule is subject to the standard constraint that the variable $x$ must not appear freely in $\Gamma$.
\end{enumerate}
\end{definition}

\begin{corollary}\label{5.2}
Let $\[\cdot\]_\Q$ be a $\Q$-valued interpretation of the formulas of $NOM_\Q$ whose quantized implication is the Sasaki arrow \cite{Ozawa5}*{sec.~4.2}, e.g., Takeuti's interpretation \cite{Takeuti}. Let $\phi$ be a closed formula of $NOM_\Q$. If $\Proves \phi$ is derivable in $NOM_\Q$, then $\[\phi\]_\Q = \top$.
\end{corollary}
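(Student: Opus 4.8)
The plan is to prove the stronger statement that \emph{every} derivable sequent of $NOM_\Q$ is true under $\[\cdot\]_\Q$, by induction on the length of a derivation, and then read off the corollary from the special case of a sequent $\Proves \phi$ with empty antecedent. First I would fix the notion of truth. Since $\[\cdot\]_\Q$ assigns a value only to formulas all of whose variables have been replaced by constants from $V^{(\Q)}$, I would call a sequent $\gamma_1, \ldots, \gamma_m \Proves \psi$ \emph{true} if for every assignment $\sigma$ of elements of $V^{(\Q)}$ to the free variables occurring in it one has $\[\sigma\gamma_1\]_\Q \Andthen \cdots \Andthen \[\sigma\gamma_m\]_\Q \leq \[\sigma\psi\]_\Q$, in the spirit of Definition~\ref{3.3}; the empty Sasaki product ($m=0$) is $\top$ by convention. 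The hypothesis that $\[\cdot\]_\Q$ is a $\Q$-valued interpretation whose quantized implication is the Sasaki arrow is exactly what guarantees that, on closed formulas, $\[\cdot\]_\Q$ satisfies the clauses $\[\chi_1 \And \chi_2\]_\Q = \[\chi_1\]_\Q \And \[\chi_2\]_\Q$, $\[\chi_1 \Implies \chi_2\]_\Q = \[\chi_1\]_\Q \Implies \[\chi_2\]_\Q$, and $\[\Not \chi_1\]_\Q = \Not \[\chi_1\]_\Q$ of Theorem~\ref{3.5}, together with the clause $\[(\forall x)\chi\]_\Q = \bigwedge_{a \in V^{(\Q)}} \[\chi[x/a]\]_\Q$, which is well defined because $\Q$ is complete, and a routine substitution lemma to the effect that $\[\chi[x/t]\]_\Q$ agrees with $\[\chi\]_\Q$ evaluated with $x$ sent to the value of $t$ (here trivial, since the only terms of $NOM_\Q$ are variables and constants of $V^{(\Q)}$).

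With this in place, the soundness of the eleven rules of Figure~\ref{F1} is immediate: each such rule involves no quantifiers, so for a fixed assignment $\sigma$ it becomes an instance of the corresponding propositional rule applied to the values $\[\sigma\gamma_i\]_\Q$, $\[\sigma\psi\]_\Q$ in the orthomodular lattice $\Q$, and the proof of Theorem~\ref{3.5}—which uses only orthomodular-lattice identities and the adjunction $a \Andthen b \leq c \EV a \leq b \Implies c$, all valid in $\Q$—shows that this instance is sound. (Completeness of $\Q$ is not used here; it is needed only for the $\forall$-clause.) So every instance of a Figure~\ref{F1} rule preserves truth.

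It remains to treat the two quantifier rules. For $\forall$-elimination, fix $\sigma$; by the $\forall$-clause and the substitution lemma, $\[\sigma(\forall x)\phi\]_\Q = \bigwedge_{a} \[\sigma\phi[x/a]\]_\Q \leq \[\sigma\phi[x/t]\]_\Q$, using that $\sigma t$ is some element of $V^{(\Q)}$; hence if the premise is true so is the conclusion. For $\forall$-introduction the side condition that $x$ does not occur freely in $\Gamma$ is essential and is the one subtle point. Fix an assignment $\sigma$ to the free variables of $\Gamma \Proves (\forall x)\phi$, and for each $a \in V^{(\Q)}$ let $\sigma_a$ extend $\sigma$ by sending $x$ to $a$; truth of the premise $\Gamma \Proves \phi$ at $\sigma_a$ gives $\[\sigma_a\Gamma\]_\Q \leq \[\sigma_a\phi\]_\Q = \[\sigma\phi[x/a]\]_\Q$, and since $x \notin \mathrm{FV}(\Gamma)$ the left side equals $\[\sigma\Gamma\]_\Q$, independent of $a$. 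Taking the infimum over $a$ yields $\[\sigma\Gamma\]_\Q \leq \bigwedge_a \[\sigma\phi[x/a]\]_\Q = \[\sigma(\forall x)\phi\]_\Q$, so the conclusion is true. This completes the induction: every derivable sequent of $NOM_\Q$ is true under $\[\cdot\]_\Q$. In particular, if $\Proves \phi$ is derivable with $\phi$ closed, then taking $\sigma$ empty and using the $m=0$ convention gives $\top \leq \[\phi\]_\Q$, hence $\[\phi\]_\Q = \top$.

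I expect the main obstacle to be not any single step of the induction but the careful verification that Takeuti's interpretation—or, more generally, any $\Q$-valued interpretation in the sense of \cite{Ozawa5}*{sec.~4.2} with Sasaki-arrow implication—genuinely satisfies the four semantic clauses above, in particular the infimum clause for $\forall$ and the substitution lemma, and that the handling of free variables and eigenvariables matches how those clauses are formulated in \cite{Takeuti} and \cite{Ozawa5}. Once those facts are quoted or checked, the soundness argument is a routine induction resting on Theorem~\ref{3.5} and the adjunction of \cite{Finch}.
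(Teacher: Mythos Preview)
Your proposal is correct and follows essentially the same route as the paper: define a sequent to be true when the Sasaki-projection inequality holds under every substitution of constants from $V^{(\Q)}$ for its free variables, invoke Theorem~\ref{3.5} to handle all the propositional rules of Figure~\ref{F1} uniformly, and verify the two quantifier rules directly from the infimum clause $\[(\forall x)\chi\]_\Q = \bigwedge_{u \in V^{(\Q)}} \[\chi[x/u]\]_\Q$ together with the eigenvariable side condition. The paper phrases the induction in terms of explicit tuples $u_1,\ldots,u_m \in V^{(\Q)}$ rather than assignments $\sigma$, but this is purely notational.
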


\begin{proof}
We define a sequent $\phi_1(x_1, \ldots, x_m), \ldots, \phi_n(x_1, \ldots, x_m) \Proves \psi (x_1, \ldots, x_m)$ to be \emph{$\Q$-true} if $\[\phi_1(u_1, \ldots, u_m)\]_\Q \Andthen \cdots \Andthen \[\phi_n(u_1, \ldots, u_m)\]_\Q \leq \[\psi(u_1, \ldots, u_m)\]_\Q$ for all $u_1, \ldots, u_m \in V^{(\Q)}$. We prove by induction that every derivable sequent of $NOM_\Q$ is $\Q$-true. It is evidently sufficient to verify that each rule is \emph{$\Q$-sound}: if its premises are $\Q$-true, then its conclusion is also $\Q$-true. For those rules that appear in Figure~\ref{F1}, this follows by Theorem~\ref{3.5}.

Let $\phi_1(x_1, \ldots, x_m), \ldots, \phi_n(x_1, \ldots, x_m) \Proves \psi (x_0, x_1, \ldots, x_m)$ be a $\Q$-true sequent. Then, \begin{align*}&\[\phi_1(u_1, \ldots, u_m)\]_\Q \Andthen \cdots \Andthen \[\phi_n(u_1, \ldots, u_m)\]_\Q \\ & \hspace{20ex}\leq \bigwedge_{u \in V^{(\Q)}} \[\psi(u, u_1, \ldots, u_m)\]_\Q
= \[(\forall x_0)\psi(x_0, u_1, \ldots, u_m)\]_\Q
\end{align*}
for all $u_1, \ldots, u_m \in V^{(\Q)}$. Therefore, the sequent $\phi_1(x_1, \ldots, x_m), \ldots, \phi_n(x_1, \ldots, x_m) \Proves (\forall x_0) \psi (x_0, x_1, \ldots, x_m)$ is $\Q$-true, and more generally, the $\forall$-introduction rule is $\Q$-sound.

Let $\phi_1(x_1, \ldots, x_m), \ldots, \phi_n(x_1, \ldots, x_m) \Proves (\forall x_0) \psi (x_0, x_1, \ldots, x_m)$ be a $\Q$-true sequent, and let $t$ be a term. We calculate that 
\begin{align*}&
\[\phi_1(u_1, \ldots, u_m)\]_\Q \Andthen \cdots \Andthen \[\phi_n(u_1, \ldots, u_m)\]_\Q
\\ & \hspace{20ex} \leq 
\[(\forall x_0) \phi_n(x_0, u_1 \ldots, u_m)\]_\Q
=
\bigwedge_{u \in V^{(\Q)}} \[\phi_n(u, u_1, \ldots, u_m)\]_\Q
\end{align*}
for all $u_1, \ldots, u_m \in V^{(\Q)}$. If $t$ is a variable, then $\phi_1(x_1, \ldots, x_m), \ldots, \phi_n(x_1, \ldots, x_m) \Proves \psi (t, x_1, \ldots, x_m)$ is $\Q$-true because $\bigwedge_{u \in V^{(\Q)}} \[\phi_n(u, u_1, \ldots, u_m)\]_\Q \leq \[\phi_n(u_0, u_1, \ldots, u_m)\]_\Q$ for all $u_0 \in V^{(\Q)}$, and if $t$ is a constant, then $\phi_1(x_1, \ldots, x_m), \ldots, \phi_n(x_1, \ldots, x_m) \Proves \psi (t, x_1, \ldots, x_m)$ is $\Q$-true for the same reason. Therefore, the $\forall$-elimination rule is $\Q$-sound.

Altogether, we find that all of the inference rules of $NOM_\Q$ are $\Q$-sound. In particular, if $\phi$ is a closed formula such that $\Proves \phi$ is derivable in $NOM_\Q$, then $\Proves \phi$ is $\Q$-true, and thus $\top \leq \[\phi\]_\Q$ as desired.
\end{proof}

The quantum sets that are the discrete quantum spaces of noncommutative geometry \cite{quantumsets}*{Def.~2.1}\cite{PodlesWoronowicz} form a category in two inequivalent natural ways. The category $\cat{qSet}$ of quantum sets and functions generalizes the category $\cat{Set}$ of sets and functions, and it is dual to the category of hereditarily atomic von Neumann algebras and unital normal $*$-homomorphisms \cite{quantumsets}*{Def.~5.3, Thm.~7.4}. The category $\cat{qRel}$ of quantum sets and binary relations generalizes the category $\cat{Rel}$ of sets and binary relations, and it is dual to the category of hereditarily atomic von Neumann algebras and quantum relations in the sense of Weaver \cite{discretequantumstructures}*{app.~A.2}\cite{Weaver2}.

A function between quantum sets is formally a kind of binary relation between quantum sets \cite{quantumsets}*{Def.~4.1}; $\cat{qSet}$ is a subcategory of $\cat{qRel}$. The two categories share the same symmetric monoidal structure, whose product is notated $\times$ because it generalizes the Cartesian product of sets in a suitable way \cite{quantumsets}*{Defs.~2.2,~3.3}. The unit of the monoidal structure is notated $\mathbf{1}$.
For a quantum set $\X$, there exist suitable projection functions $\X \times \X \to \X$, but in general, there does not exist a suitable diagonal function $\X \to \X \times \X$ \cite{quantumsets}*{Def.~10.3}. This is not a defect in the definition but rather a basic feature of quantum spaces in noncommutative geometry \cite{Woronowicz}.

In this setting, we work with a class of formulas that does not presume the existence of diagonal functions. A \emph{nonduplicating} formula is a formula such that no variable, bound or free, appears more than once in any atomic subformula. For illustration, $P(x,y) \And Q(x,y)$ is a nonduplicating formula, and $P(x,x) \And Q(y,y)$ is not a nonduplicating formula.

\begin{definition}\label{5.3}
We define the many-sorted predicate deductive system $NOM_\qqq$:
\begin{enumerate}
\item Its formulas are the nonduplicating first-order formulas whose
\begin{enumerate}
\item connectives are $\And$, $\Implies$, and $\Not$;
\item quantifiers are $\forall$;
\item sorts are quantum sets;
\item relation symbols of each arity $(\X_1, \ldots, \X_n)$ are binary relations $\X_1 \times \cdots \times \X_n \to \mathbf 1;$
\item function symbols of each arity $(\X_1, \ldots, \X_m; \Y)$ are functions $\X_1 \times \cdots \times \X_m \to \Y.$
\end{enumerate}
\item Its rules of inference are those in Figure~\ref{F1} together with
\begin{equation*}
\begin{aligned}
\drv
{\Gamma \Proves \phi}
{\Gamma \Proves (\forall x)\phi}
\end{aligned}\;,
\qquad \qquad
\begin{aligned}
\drv
{\Gamma \Proves (\forall x)\phi}
{\Gamma \Proves \phi[x/t]}
\end{aligned}\;,
\qquad
\qquad
\begin{aligned}
\drv
{\Gamma, \phi, \psi \Proves \chi}
{\Gamma, \psi, \phi \Proves \chi}
\end{aligned}\;,
\end{equation*}
where the first rule is subject to the constraint that $x$ must not appear freely in $\Gamma$, the second rule is subject to the constraint that $\phi$ and $t$ must not have any free variables in common, and the third rule is subject to the constraint that $\phi$ and $\psi$ must not have any free variables in common.
\end{enumerate}
\end{definition}

\begin{corollary}\label{5.4}
Let $\[\cdot\]_\qqq$ be Weaver's interpretation of the formulas of $NOM_\qqq$ \cite{discretequantumstructures}*{sec.~2}\cite{Weaver} with implication being interpreted as the Sasaki arrow. Let $\phi$ be a closed formula of $NOM_\qqq$. If $\Proves \phi$ is derivable in $NOM_\qqq$, then $\[\phi\]_\qqq = \top$.
\end{corollary}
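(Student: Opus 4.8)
The plan is to mirror the proof of Corollary~\ref{5.2}. Fix a \emph{context}, that is, a finite list $x_1\colon \X_1, \ldots, x_m\colon \X_m$ of typed variables. In Weaver's semantics each formula $\phi$ of $NOM_\qqq$ whose free variables all occur among $x_1, \ldots, x_m$ is interpreted as a quantum relation $\[\phi\]_\qqq\colon \X_1 \times \cdots \times \X_m \to \mathbf 1$, equivalently as a projection in the hereditarily atomic von Neumann algebra dual to $\X_1 \times \cdots \times \X_m$; these projections form an orthomodular lattice. Call a sequent $\phi_1, \ldots, \phi_n \Proves \psi$ \emph{$\qqq$-true} if $\[\phi_1\]_\qqq \Andthen \cdots \Andthen \[\phi_n\]_\qqq \leq \[\psi\]_\qqq$, computed in any context containing all free variables of the sequent. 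This does not depend on the chosen context, because enlarging the context pulls each interpretation back along the corresponding weakening map, which is a unital normal embedding and hence preserves $\leq$, $\And$, and $\Not$, and therefore also $\Andthen$. As in Corollary~\ref{5.2}, it then suffices to verify that every inference rule of $NOM_\qqq$ is \emph{$\qqq$-sound}, i.e.\ sends $\qqq$-true premises to a $\qqq$-true conclusion; granting this, every derivable sequent is $\qqq$-true, so a derivable $\Proves \psi$ with $\psi$ closed yields $\top \leq \[\psi\]_\qqq$.

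Read as rules of $NOM_\qqq$, the eleven rules of Figure~\ref{F1} each keep all of their formulas in a common context, and in a fixed context Weaver's interpretation satisfies $\[\phi \And \psi\]_\qqq = \[\phi\]_\qqq \And \[\psi\]_\qqq$, $\[\Not \phi\]_\qqq = \Not\[\phi\]_\qqq$, and, by the hypothesis that implication is interpreted as the Sasaki arrow, $\[\phi \Implies \psi\]_\qqq = \[\phi\]_\qqq \Implies \[\psi\]_\qqq$. The soundness computations in the proof of Theorem~\ref{3.5} concern only orthomodular lattices and use nothing beyond these three identities, so all eleven rules of Figure~\ref{F1} are $\qqq$-sound, and therefore so is every rule derivable in the propositional system $NOM$, in particular every rule established in Sections~\ref{S2} and~\ref{S4}. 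The two $\forall$-rules are handled exactly as in the proof of Corollary~\ref{5.2}: $\[(\forall x)\phi\]_\qqq$ is the largest projection whose weakening to the context enlarged by $x$ is $\leq \[\phi\]_\qqq$ — the right adjoint to weakening along the relevant product projection — so this adjunction at once gives soundness of $\forall$-introduction, while $\forall$-elimination follows by pulling the defining inequality back along the interpretation of the term $t$. The side condition that $\phi$ and $t$ share no free variable is precisely what makes $\phi[x/t]$ a well-formed nonduplicating formula and the substitution function available in $\cat{qRel}$ without recourse to a diagonal.

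The one genuinely new rule is the restricted exchange rule, with $\phi$ and $\psi$ having no free variable in common. The plan is to reduce it to the rule
\begin{equation*}
\drv{\Gamma \Proves \phi \Comp \psi & \Gamma, \phi, \psi \Proves \chi}{\Gamma, \psi, \phi \Proves \chi}
\end{equation*}
(the case of Proposition~\ref{4.7} with empty $\Delta$), which is derivable in $NOM$ and hence $\qqq$-sound; for this it is enough to see that $\Gamma \Proves \phi \Comp \psi$ is $\qqq$-true, and in fact that $\[\phi \Comp \psi\]_\qqq = \top$. The key structural input from Weaver's semantics is that $\[\phi\]_\qqq$ is always pulled back from the tensor factors indexed by the free variables of $\phi$; this follows by induction on $\phi$, since every connective and the quantifier preserve it. Hence, when $\phi$ and $\psi$ have disjoint free-variable sets, $\[\phi\]_\qqq$ and $\[\psi\]_\qqq$ lie in commuting von Neumann subalgebras of the common-context algebra, so they commute as projections, and commuting projections are compatible. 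Since $\[\cdot\]_\qqq$ commutes with $\And$, $\Implies$, and $\Not$ in a fixed context, $\[\phi \Comp \psi\]_\qqq$ equals the compatibility element $\[\phi\]_\qqq \Comp \[\psi\]_\qqq$, which by Propositions~\ref{4.11} and~\ref{4.14} coincides with Marsden's commutator of $\[\phi\]_\qqq$ and $\[\psi\]_\qqq$ \cite{Marsden} and is therefore $\top$. Thus both premises of the displayed rule are $\qqq$-true whenever $\Gamma, \phi, \psi \Proves \chi$ is, so $\Gamma, \psi, \phi \Proves \chi$ is $\qqq$-true, and the restricted exchange rule is $\qqq$-sound.

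The main obstacle is not a computation but the need to isolate from Weaver's semantics the handful of structural facts the argument consumes: that in a fixed context the interpretation commutes with $\And$ and $\Not$ (with $\Implies$ stipulated to be the Sasaki arrow), that $\forall$ is the right adjoint of weakening, and that the interpretation of a formula is supported on the tensor factors of its free variables. Given these, soundness of the propositional rules is Theorem~\ref{3.5}, soundness of the quantifier rules is the argument of Corollary~\ref{5.2}, and soundness of the restricted exchange rule is the short reduction above. One could instead verify directly that $(a \Andthen p) \Andthen q = a \Andthen (p \And q) = (a \Andthen q) \Andthen p$ for commuting projections $p, q$ and arbitrary $a$, using $p \And q = pq$ and the identity $\operatorname{rp}\!\big(q\,\operatorname{rp}(pap)\,q\big) = \operatorname{rp}(qpapq)$ for positive operators; but routing through the compatibility connective keeps the argument inside the already-developed calculus.
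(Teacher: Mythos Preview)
Your argument is correct and tracks the paper's proof closely: the same definition of $\qqq$-truth, the same appeal to Theorem~\ref{3.5} for the propositional rules, and the same adjunction-based treatment of the $\forall$-rules (the paper makes the substitution step explicit via Proposition~\ref{B.2}, which is exactly the ``pull back along the interpretation of $t$'' you invoke informally).

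The one genuine divergence is in handling the restricted exchange rule. The paper does not route through the compatibility connective at all; instead it computes directly in $\cat{qRel}$ that when $\phi$ and $\psi$ have disjoint free variables their interpretations factor as $\[\overline y \suchthat \phi\]_\qqq \times \top_\Z$ and $\top_\Y \times \[\overline z \suchthat \psi\]_\qqq$, and then verifies by elementary linear algebra that $(P \times \top_\Z) \Andthen (\top_\Y \times Q) = P \times Q = (\top_\Y \times Q) \Andthen (P \times \top_\Z)$, so the two Sasaki projections may be swapped under any prefix $\[\Gamma\]_\qqq$. Your route---observe that the two projections commute, conclude $\[\phi \Comp \psi\]_\qqq = \top$ via the Marsden commutator, and then invoke the already-$\qqq$-sound rule of Proposition~\ref{4.7}---is the alternative you yourself flag at the end. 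Both are short; the paper's direct calculation avoids importing Section~\ref{S4}, while yours keeps the semantic input minimal (just ``commuting projections are compatible'') and lets the calculus do the rest.
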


\begin{proof}
We define a sequent $\phi_1, \ldots, \phi_n \Proves \psi$ to be \emph{$\qqq$-true} if $\[\bar x \suchthat \phi_1\]_\qqq \Andthen \cdots \Andthen \[\bar x \suchthat \phi_n\]_\qqq \leq \[ \bar x \suchthat \psi\]_\qqq$ for all tuples of distinct variables $\bar x = (x_1, \ldots, x_m)$ that include the free variables of the sequent. We prove by induction that every derivable sequent in $NOM_\qqq$ is $\qqq$-true. It is evidently sufficient to verify that each rule is \emph{$\qqq$-sound}: if its premises are $\qqq$-true, then its conclusion is $\qqq$-true. For those rules that appear in Figure 1, this follows by Theorem~\ref{3.5}; the binary relations between any two quantum sets form an orthomodular lattice \cite{quantumsets}*{Def.~3.8}.

Let $\phi_1, \ldots, \phi_n \Proves \psi$ be a $\qqq$-true sequent, and let $x_0$ be a variable that does not appear freely in the antecedent. Let $\overline x = (x_1, \ldots, x_m)$ be a tuple of distinct variables that includes the free variables of the sequent $\phi_1, \ldots, \phi_n \Proves (\forall x_0)\psi$. Without loss of generality, we may assume that $x_0$ does not appear in $\overline x$. Because the sequent $\phi_1, \ldots, \phi_n \Proves \psi$ is $\qqq$-true, we have that $\[x_0, \overline x \suchthat  \phi_1\]_\qqq \Andthen \cdots \Andthen \[x_0, \overline x  \suchthat  \phi_n\]_\qqq \leq \[x_0, \overline x \suchthat  \psi\]_\qqq$. By \cite{discretequantumstructures}*{Prop.~2.3.3}, $\[x_0, \overline x \suchthat  \phi_i\]_\qqq = \top_{\X_0} \times \[\overline x, \phi_i\]_\qqq$ for each index $i \in \{1, \ldots, n\}$, where $\X_0$ is the sort of $x_0$. We calculate that
\begin{align*} 
\top_{\X_0} \times (\[\overline x \suchthat  \phi_1\]_\qqq \Andthen \cdots \Andthen \[\overline x  \suchthat  \phi_n\]_\qqq)
& =
(\top_{\X_0} \times \[\overline x \suchthat  \phi_1\]_\qqq) \Andthen \cdots \Andthen 
(\top_{\X_0} \times \[\overline x \suchthat  \phi_n\]_\qqq)
\\ & =
\[x_0, \overline x \suchthat  \phi_1\]_\qqq \Andthen \cdots \Andthen \[x_0, \overline x  \suchthat  \phi_n\]_\qqq
\leq
\[x_0, \overline x \suchthat  \psi\]_\qqq,
\end{align*}
which implies that $\[\overline x \suchthat  \phi_1\]_\qqq \Andthen \cdots \Andthen \[\overline x  \suchthat  \phi_n\]_\qqq \leq \[\overline x  \suchthat (\forall x_0) \psi) \]_\qqq$ by \cite{discretequantumstructures}*{Def.~2.3.2}. Therefore, $\phi_1, \ldots, \phi_n \Proves (\forall x_0) \psi$ is $\qqq$-true, and more generally, the $\forall$-introduction rule is $\qqq$-sound.

Let $\phi_1, \ldots, \phi_n \Proves (\forall x_0)\psi$ be a $\qqq$-true sequent, and let $t$ be a term of the same sort as $x_0$.  Assume that $\psi$ and $t$ have no free variables in common. Let $\overline x = (x_1, \ldots, x_m)$ be a tuple of distinct variables that includes the free variables of $\phi_1, \ldots, \phi_n \Proves \psi[x_0/t]$. Without loss of generality, we may assume that $x_0$ does not appear in $\overline x$. Let $\overline y = (y_1, \ldots, y_k)$ and $\overline z = (z_1, \ldots, z_\ell)$ be tuples of distinct variables such that the variables of $t$ appear in $\overline y$, the free variables of $\psi$ appear in $\overline z$, and the concatenation of $\overline y$ and $\overline z$ is a permutation of $\overline x$. In particular $\overline y$ and $\overline z$ have no variables in common, and neither include $x_0$.

Let $\X_0$ be the sort of $x_0$, let $\Y_1, \ldots, \Y_k$ be the sorts of $y_1, \ldots, y_k$, respectively, and let $\Z_1, \ldots, \Z_\ell$ be the sorts of $z_1, \ldots, z_\ell$, respectively. Let $\Y = \Y_1 \times \cdots \times \Y_k$, and let $\Z = \Z_1 \times \cdots \times \Z_\ell$. We calculate that
\begin{align*} & 
\[\overline y, \overline z \suchthat \phi_1\]_\qqq \Andthen \cdots \Andthen \[\overline y, \overline z  \suchthat \phi_n\]_\qqq
\leq
\[ \overline y, \overline z \suchthat (\forall x_0) \psi \]_\qqq
=
\top_\Y \times \[ \overline z \suchthat (\forall x_0) \psi\]_\qqq
\\ & = 
(\top_{\X_0} \circ \[ \overline y \suchthat t\]_\qqq) \times ( \[\overline z \suchthat  (\forall x_0) \psi\]_\qqq \circ I_\Z)
= 
(\top_{\X_0} \times \[\overline z \suchthat  (\forall x_0) \psi\]_\qqq) \circ 
(\[ \overline y \suchthat t\]_\qqq \times I_\Z)
\\ & \hspace{30ex} \leq 
\[x_0, \overline z \suchthat \psi\]_\qqq \circ (\[ \overline y \suchthat t\]_\qqq \times I_\Z)
=
\[ \overline y, \overline z \suchthat \psi[x_0/t]\]_\qqq,
\end{align*}
where $\top_\Y = \top_{\X_0} \circ \[ \overline y \suchthat t\]_\qqq$ because $\[ \overline y \suchthat t\]_\qqq$ is a function by \cite{discretequantumstructures}*{Lem.~3.5.3} and $\[x_0, \overline x \suchthat \psi\]_\qqq \circ (\[ \overline y \suchthat t\]_\qqq \times I_\Z)
=
\[ \overline y, \overline x \suchthat \psi[x_0/t]\]_\qqq$ by Proposition~\ref{B.2}. Indeed, any function $F$ from a quantum set $\Y$ to a quantum set $\X_0$ satisfies $\top_\Y = \top_{\X_0} \circ F$ because $\top_\Y = \top_\Y \circ I_\Y \leq  \top_\Y \circ F^\dagger \circ F \leq \top_{\X_0} \circ F $. We conclude that $\[\overline y, \overline z \suchthat \phi_1\]_\qqq \Andthen \cdots \Andthen \[\overline y, \overline z  \suchthat \phi_n\]_\qqq \leq \[ \overline y, \overline z \suchthat \psi[x_0/t]\]_\qqq$. It follows by \cite{discretequantumstructures}*{Def.~2.2.4, Prop.~2.3.3} that $\[\overline x \suchthat \phi_1\]_\qqq \Andthen \cdots \Andthen \[\overline x  \suchthat \phi_n\]_\qqq \leq \[ \overline x \suchthat \psi[x_0/t]\]_\qqq$. Therefore, the sequent $\phi_1, \ldots, \phi_n \Proves \psi[x_0/t]$ is $\qqq$-true, and more generally, the $\forall$-elimination is $\qqq$-sound.

Let $\phi_1, \ldots, \phi_n, \phi, \psi \Proves \chi$ be a $\qqq$-true sequent, and assume that $\phi$ and $\psi$ have no variables in common. Let $\overline x$ be a tuple of distinct variables that includes the free variables of $\phi_1, \ldots, \phi_n, \psi, \phi \Proves \chi$. Let $\overline y = (y_1, \ldots, y_k)$ and $\overline z = (z_1, \ldots, z_\ell)$ be tuples of distinct variables such that the free variables of $\phi$ appear in $\overline y$, the free variables of $\psi$ appear in $\overline z$, and the concatentation of $\overline y$ and $\overline z$ is a permutation of $\overline x$. Let $\Y_1, \ldots, \Y_k$ be the sorts of $y_1, \ldots, y_k$, and let $\Z_1, \ldots, \Z_\ell$ be the sorts of $z_1, \ldots, z_\ell$. Let $\Y = \Y_1 \times \cdots \times \Y_k$, and let $\Z = \Z_1 \times \cdots \times \Z_\ell$. Applying \cite{discretequantumstructures}*{Prop.~2.3.3}, we calculate that
\begin{align*}
\[\overline y, \overline z \suchthat \phi_1\]_\qqq \Andthen & \cdots \Andthen \[\overline y, \overline z \suchthat \phi_n\]_\qqq \Andthen \[\overline y, \overline z \suchthat \psi\]_\qqq \Andthen \[\overline y, \overline z \suchthat \phi\]_\qqq
\\ & =
\[\overline y, \overline z \suchthat \phi_1\]_\qqq \Andthen \cdots \Andthen \[\overline y, \overline z \suchthat \phi_n\]_\qqq \Andthen (\top_\Y \times \[\overline z \suchthat \psi\]_\qqq) \Andthen (\[\overline y \suchthat \phi\]_\qqq \times \top_\Z)
\\ & =
\[\overline y, \overline z \suchthat \phi_1\]_\qqq \Andthen \cdots \Andthen \[\overline y, \overline z \suchthat \phi_n\]_\qqq \Andthen (\[\overline y \suchthat \phi\]_\qqq \times \[\overline z \suchthat \psi\]_\qqq)
\\ & = 
\[\overline y, \overline z \suchthat \phi_1\]_\qqq \Andthen \cdots \Andthen \[\overline y, \overline z \suchthat \phi_n\]_\qqq \Andthen (\[\overline y \suchthat \phi\]_\qqq \times \top_\Z) \Andthen ( \top_\Y \times \[\overline z \suchthat \psi\]_\qqq)
\\ &
=
\[\overline y, \overline z \suchthat \phi_1\]_\qqq \Andthen \cdots \Andthen \[\overline y, \overline z \suchthat \phi_n\]_\qqq \Andthen \[\overline y, \overline z \suchthat \phi\]_\qqq \Andthen \[\overline y, \overline z \suchthat \psi\]_\qqq 
\leq
\[ \overline y, \overline z \suchthat \chi\]_\qqq,
\end{align*}
where the first and fourth equalities follow by \cite{discretequantumstructures}*{Prop.~2.3.3} and 
the second and third equalities follow by linear algebra \cite{quantumsets}*{Def.~3.8}.
It follows by \cite{discretequantumstructures}*{Def.~2.2.4, Prop.~2.3.3} that $\[\overline x \suchthat \phi_1\]_\qqq \Andthen \cdots \Andthen \[\overline x \suchthat \phi_n\]_\qqq \Andthen \[\overline x \suchthat \psi\]_\qqq \Andthen \[\overline x \suchthat \phi\]_\qqq \leq \[ \overline x \suchthat \chi\]_\qqq$.
Therefore, the sequent $\phi_1, \ldots, \phi_n, \psi, \phi \Proves \chi$ is $\qqq$-true, and more generally, the third rule of inference in Definition~\ref{5.3} is $\qqq$-sound.

Altogether, we have that all of the inference rules of $NOM_\qqq$ are $\qqq$-sound. In particular, if $\phi$ is a closed formula such that $\Proves \phi$ is derivable in $NOM_\qqq$, then $\Proves \phi$ is $\qqq$-true.
\end{proof}

\begin{definition}\label{5.5}
Let $(\exists x) \phi$ be an abbreviation for $\Not (\forall x) \Not \phi$.
\end{definition}

\begin{lemma}\label{5.6}
The sequent $\Gamma \Proves (\forall x) \phi \Comp \phi[x/t]$ is derivable in $NOM_\Q$. It is also derivable in $NOM_\qqq$ if $\phi$ and $t$ have no free variables in common.
\end{lemma}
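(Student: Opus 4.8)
\emph{Proof proposal.} The plan is to invoke the $\Comp$-introduction rule of Proposition~\ref{4.7}, which reduces the claim to the derivability of the two sequents $\Gamma, (\forall x)\phi, \phi[x/t] \Proves (\forall x)\phi$ and $\Gamma, \phi[x/t], (\forall x)\phi \Proves \phi[x/t]$. Granting both, that rule, applied with $(\forall x)\phi$ in the role of $\phi$ and $\phi[x/t]$ in the role of $\psi$, yields $\Gamma \Proves (\forall x)\phi \Comp \phi[x/t]$ directly.

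The second sequent is derived in two steps: the assumption rule gives $\Gamma, \phi[x/t], (\forall x)\phi \Proves (\forall x)\phi$, and a single application of the $\forall$-elimination rule then yields $\Gamma, \phi[x/t], (\forall x)\phi \Proves \phi[x/t]$. For the first sequent, I would apply the paste rule of Figure~\ref{F1}, which derives $\Delta, \alpha \Proves \beta$ from $\Delta \Proves \alpha$ and $\Delta \Proves \beta$, taking $\Delta = \Gamma, (\forall x)\phi$, $\alpha = \phi[x/t]$, and $\beta = (\forall x)\phi$; its left premise $\Gamma, (\forall x)\phi \Proves \phi[x/t]$ follows from the assumption rule by $\forall$-elimination, and its right premise $\Gamma, (\forall x)\phi \Proves (\forall x)\phi$ is the assumption rule itself. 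Splicing these short derivations together and feeding the two resulting sequents into Proposition~\ref{4.7} completes the argument.

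The very same derivation works in $NOM_\qqq$. No use is made of the $\forall$-introduction rule, so no eigenvariable condition arises, and the additional exchange rule of $NOM_\qqq$ is not invoked. The only rule used beyond those that $NOM_\qqq$ shares with $NOM_\Q$ is $\forall$-elimination, which in $NOM_\qqq$ carries the side condition that $\phi$ and $t$ have no free variables in common --- precisely the hypothesis of the second assertion, and it covers both applications of the rule in the proof. There is no real obstacle here; the only point to watch is that this side condition is respected. The substance of the lemma is just the familiar fact that $\phi[x/t]$, being a logical consequence of $(\forall x)\phi$, is compatible with it, with Proposition~\ref{4.7} converting ``is compatible with'' into the two elementary derivations above.
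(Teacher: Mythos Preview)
Your proposal is correct and matches the paper's own proof essentially line for line: both derive the two premises of the $\Comp$-introduction rule of Proposition~\ref{4.7} exactly as you describe, using the paste rule for $\Gamma,(\forall x)\phi,\phi[x/t]\Proves(\forall x)\phi$ and assumption plus $\forall$-elimination for $\Gamma,\phi[x/t],(\forall x)\phi\Proves\phi[x/t]$. Your remarks on the $NOM_\qqq$ case are also accurate.
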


\begin{proof}
Refer to Proposition~\ref{4.7}.
\begin{equation*}
\drv
{\drv
{\drv
{\drv
{}
{\Gamma, (\forall x) \phi \Proves (\forall x) \phi}}
{\Gamma, (\forall x) \phi \Proves \phi[x/t]}
&\drv
{}
{\Gamma,(\forall x) \phi \Proves (\forall x) \phi}}
{\Gamma, (\forall x) \phi, \phi[x/t] \Proves (\forall x) \phi}
&\drv
{\drv
{}
{\Gamma, \phi[x/t], (\forall x) \phi \Proves (\forall x) \phi}}
{\Gamma, \phi[x/t], (\forall x) \phi \Proves \phi[x/t]}}
{\Gamma \Proves (\forall x) \phi \Comp \phi[x/t].}
\end{equation*}
\end{proof}

\begin{proposition}\label{5.7}
The following inference rules are derivable in $NOM_\Q$:
\begin{equation*}
\begin{aligned}
\drv
{\Gamma \Proves \phi[x/t]}
{\Gamma \Proves (\exists x)\phi}
\end{aligned}\;,
\qquad \qquad
\begin{aligned}
\drv
{\Gamma \Proves (\exists x)\phi & \Gamma, \phi \Proves \psi & \Gamma, \psi, \phi \Proves \psi }
{\Gamma \Proves \psi}
\end{aligned}\;,
\end{equation*}
where the latter rule is subject to the standard constraint that the variable $x$ must not appear freely in $\Gamma$ or in $\psi$. Both rules are also derivable in $NOM_\qqq$ if the former rule is subject to the additional constraint that $\phi$ and $t$ have no free variables in common.
\end{proposition}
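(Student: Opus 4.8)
The plan is to derive both rules from the results already established for $NOM$ together with the $\forall$-rules; recall that $(\exists x)\phi$ abbreviates $\Not(\forall x)\Not\phi$.

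\emph{The $\exists$-introduction rule.} From the assumption rule and $\forall$-elimination, the sequent $(\forall x)\Not\phi \Proves \Not\phi[x/t]$ is derivable; in $NOM_\qqq$ this instance of $\forall$-elimination is precisely why one must assume that $\phi$ and $t$ have no free variable in common. Contraposing this sequent as in Lemma~\ref{3.6}(2), whose derivation uses only rules common to all three systems, gives $\Not\Not\phi[x/t] \Proves \Not(\forall x)\Not\phi$, that is, $\Not\Not\phi[x/t]\Proves(\exists x)\phi$. Proposition~\ref{2.4} turns the premise $\Gamma\Proves\phi[x/t]$ into $\Gamma\Proves\Not\Not\phi[x/t]$, and a weakening (Proposition~\ref{3.1}) followed by the cut rule then produces $\Gamma\Proves(\exists x)\phi$.

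\emph{The $\exists$-elimination rule.} Write the premises as (a) $\Gamma\Proves\Not(\forall x)\Not\phi$, (b) $\Gamma,\phi\Proves\psi$, and (c) $\Gamma,\psi,\phi\Proves\psi$, with $x$ free in neither $\Gamma$ nor $\psi$. Premise (c), together with the assumption sequent $\Gamma,\phi,\psi\Proves\phi$, yields $\Gamma\Proves\phi\Comp\psi$ by Proposition~\ref{4.7}; this is the compatibility that the new premise contributes. Feeding $\Gamma\Proves\phi\Comp\psi$ and premise (b) into Proposition~\ref{4.9} gives $\Gamma,\Not\psi\Proves\Not\phi$, and since $x$ is free in neither $\Gamma$ nor $\psi$, the $\forall$-introduction rule upgrades this to $\Gamma,\Not\psi\Proves(\forall x)\Not\phi$. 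Because $\Not(\forall x)\Not\phi$ is literally $(\exists x)\phi$, it therefore suffices to also derive $\Gamma,\Not\psi\Proves(\exists x)\phi$: the two sequents $\Gamma,\Not\psi\Proves(\forall x)\Not\phi$ and $\Gamma,\Not\psi\Proves\Not(\forall x)\Not\phi$ then give $\Gamma\Proves\psi$ by Proposition~\ref{2.4}. To obtain $\Gamma,\Not\psi\Proves(\exists x)\phi$ I would start from premise (a) and invoke the derived rule that appends a formula to the right end of an antecedent whenever it is compatible with the succedent --- this rule follows from Proposition~\ref{4.7} and the generalized paste rule of Theorem~\ref{2.6} --- using the compatibility $\Gamma\Proves\Not\psi\Comp(\exists x)\phi$.

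Thus the crux is to produce $\Gamma\Proves(\exists x)\phi\Comp\psi$ (equivalently, by Proposition~\ref{4.8}, $\Gamma\Proves\Not\psi\Comp(\exists x)\phi$) out of $\Gamma\Proves\phi\Comp\psi$; since $(\exists x)\phi$ and $(\forall x)\Not\phi$ are complementary and $\Comp$ is insensitive to negation and symmetric (Proposition~\ref{4.8}), this amounts to showing that universal quantification over a variable not free in $\psi$ preserves compatibility with $\psi$. I expect this to be the main obstacle. The intended route is to unfold $\Comp$ via the commutator expression of Result~\ref{R6} (Proposition~\ref{4.11}), to note that $\Gamma\Proves\phi[x/y]\Comp\psi$ holds for every fresh variable $y$ and that $(\forall x)\Not\phi$ is compatible with each of its instances by Lemma~\ref{5.6}, and then to pass to the quantified formula using $\forall$-introduction and the disjunction rules of Corollary~\ref{4.6}; this is the deductive analogue of the orthomodular-lattice fact that a meet of elements each compatible with $\psi$ is again compatible with $\psi$. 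In $NOM_\qqq$ every application of $\forall$-elimination in these arguments is made at a fresh variable rather than at $x$ itself, so that the free-variable restriction on that rule is respected; this is also the source of the extra hypothesis attached to the $\exists$-introduction rule there.
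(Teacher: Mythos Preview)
Your argument for the $\exists$-introduction rule is correct, though it differs from the paper's: instead of contraposing via Lemma~\ref{3.6}(2), the paper uses the compatibility $\Gamma\Proves(\forall x)\Not\phi\Comp\phi[x/t]$ from Lemma~\ref{5.6} to derive $\Gamma,(\forall x)\Not\phi\Proves\phi[x/t]$ and $\Gamma,(\forall x)\Not\phi\Proves\Not\phi[x/t]$, then applies Proposition~\ref{2.4}. Your route is arguably cleaner. One small slip: $\Gamma,\phi,\psi\Proves\phi$ is \emph{not} an instance of the assumption rule in this noncommutative system; you need premise~(b) and paste to obtain it (as the paper does).

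For the elimination rule you follow the paper exactly up to $\Gamma,\Not\psi\Proves(\forall x)\Not\phi$, but then you take a needless detour. You propose to derive $\Gamma,\Not\psi\Proves(\exists x)\phi$ by first establishing $\Gamma\Proves(\exists x)\phi\Comp\psi$, which you yourself flag as ``the main obstacle'' and only sketch vaguely (a deductive version of ``meets preserve compatibility''). This obstacle is entirely avoidable. From $\Gamma,\Not\psi\Proves(\forall x)\Not\phi$ and premise~(a), the generalized paste rule of Theorem~\ref{2.6} gives
\[
\Gamma,\,\Not(\forall x)\Not\phi,\,\Not\psi\ \Proves\ (\forall x)\Not\phi,
\]
hence $\Gamma,\Not(\forall x)\Not\phi,\Not\psi\Proves\Not\Not(\forall x)\Not\phi$ by Proposition~\ref{2.4}. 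Now Proposition~\ref{4.2} (from $\Gamma',\alpha,\beta\Proves\Not\alpha$ infer $\Gamma',\alpha\Proves\Not\beta$) yields $\Gamma,\Not(\forall x)\Not\phi\Proves\Not\Not\psi$, i.e.\ $\Gamma,\Not(\forall x)\Not\phi\Proves\psi$, and a cut with premise~(a) finishes. No compatibility of the quantified formula with $\psi$ is ever needed; the whole ``crux'' you identify simply does not arise.
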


\begin{proof}
Refer to Propositions \ref{2.4}, \ref{4.2}, \ref{4.7}, \ref{4.8}, and \ref{4.9} and Lemma~\ref{5.6}.
\begin{equation*}
\drv
{\drv
{\Gamma \Proves \phi[x/t]
&\drv
{\drv
{\drv
{\drv
{}
{}}
{\Gamma \Proves (\forall x) \Not \phi \Comp  \Not \phi[x/t]}}
{\Gamma \Proves (\forall x) \Not \phi \Comp \phi[x/t]}}
{\Gamma, \phi[x/t], (\forall x) \Not \phi \Proves \phi[x/t]}}
{\Gamma, (\forall x) \Not \phi \Proves \phi[x/t]}
&\drv
{\drv
{}
{\Gamma, (\forall x) \Not \phi \Proves (\forall x) \Not \phi}}
{\Gamma, (\forall x) \Not \phi \Proves \Not \phi[x/t]}}
{\Gamma \Proves \Not (\forall x) \Not \phi.}
\end{equation*}

\begin{equation*}
\drv
{\Gamma \Proves \Not (\forall x) \Not \phi
&\drv
{\drv
{\drv
{\drv
{\Gamma \Proves \Not (\forall x) \Not \phi
&
\drv
{\drv
{\drv
{\drv
{\Gamma, \phi \Proves \psi
&\drv
{}
{\Gamma, \phi \Proves \phi}}
{\Gamma, \phi, \psi \Proves \phi}
& \Gamma, \psi, \phi \Proves \psi}
{\Gamma \Proves \phi \Comp \psi}
& \Gamma, \phi \Proves \psi}
{\Gamma, \Not \psi \Proves \Not \phi}}
{\Gamma, \Not \psi \Proves (\forall x) \Not \phi}}
{\Gamma, \Not (\forall x) \Not \phi, \Not \psi \Proves (\forall x) \Not \phi}}
{\Gamma, \Not (\forall x) \Not \phi, \Not \psi \Proves \Not \Not (\forall x) \Not \phi}}
{\Gamma, \Not (\forall x) \Not \phi \Proves \Not \Not \psi }}
{\Gamma, \Not (\forall x) \Not \phi \Proves \psi }}
{\Gamma \Proves \psi.}
\end{equation*}
\end{proof}

\appendix

\section{}\label{A}

Let $\H$ be a Hilbert space of any dimension. For each subspace $A$ of $\H$, let $\overline A$ be the closure of $A$. For each closed subspace $A$ of $\H$, let $[A]\: \H \to \H$ be the corresponding orthogonal projection operator.

\begin{proposition}\label{A.1}
Let $A$ and $B$ be closed subspaces of $\H$. Then, $A \Andthen B = \overline{[B]A}$.
\end{proposition}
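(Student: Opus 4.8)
The plan is to unwind the definition of the Sasaki projection in the ortholattice $\Q$ of closed subspaces of $\H$ and then verify two inclusions directly. Recall that in $\Q$ the meet of two closed subspaces is their intersection, the orthocomplement $\Not B$ is the orthogonal complement $B^\perp$, and the join $A \Or C$ is the closure $\overline{A+C}$ of the algebraic sum $A + C = \{a + c \suchthat a \in A,\ c \in C\}$. Hence $A \Andthen B = (A \Or \Not B) \And B = \overline{A + B^\perp} \cap B$, and the claim reduces to the set-theoretic equality $\overline{A + B^\perp} \cap B = \overline{[B]A}$, which I would prove by establishing the two inclusions separately.

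For the inclusion $\overline{[B]A} \subseteq \overline{A + B^\perp} \cap B$: every vector $[B]a$ with $a \in A$ lies in $B$, so $[B]A \subseteq B$ and therefore $\overline{[B]A} \subseteq B$ because $B$ is closed; moreover $[B]a = a - [B^\perp]a$ exhibits $[B]a$ as an element of $A + B^\perp$, so $[B]A \subseteq \overline{A + B^\perp}$, whence $\overline{[B]A} \subseteq \overline{A + B^\perp}$ since the latter is closed. Intersecting the two conclusions gives the desired inclusion. For the reverse inclusion, let $x \in \overline{A + B^\perp} \cap B$ and write $x = \lim_n (a_n + b_n)$ with $a_n \in A$ and $b_n \in B^\perp$. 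Applying the bounded operator $[B]$, which is continuous, and using $[B]b_n = 0$ gives $[B]x = \lim_n [B]a_n$; since $x \in B$ we have $[B]x = x$, so $x = \lim_n [B]a_n \in \overline{[B]A}$.

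I do not anticipate a genuine obstacle here; the argument is short. The only points that require care are the correct identification of the lattice operations in $\Q$ — in particular that the join carries a closure, which is exactly why the algebraic sum $A + B^\perp$ need not itself be closed and why the limit argument in the second inclusion is necessary — and the routine use of the continuity of the orthogonal projection $[B]$ to pass the limit through.
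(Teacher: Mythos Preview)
Your proof is correct. Both inclusions go through exactly as you describe, and the only subtleties you flag (closure in the join, continuity of $[B]$) are indeed the points that matter.

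Your route differs from the paper's, though. The paper never writes the join as $\overline{A+B^\perp}$ and never uses sequences. Instead, for $\overline{[B]A}\subseteq A\Andthen B$ it takes $\xi\in A$ and checks via the inner product that $[B]\xi\perp\eta$ for every $\eta\in A^\perp\wedge B$, concluding $[B]\xi\in(A^\perp\wedge B)^\perp=A\vee B^\perp$; your decomposition $[B]a=a-[B^\perp]a$ reaches the same conclusion more directly. For the reverse inclusion the paper argues contrapositively through orthocomplements: given $\zeta\in([B]A)^\perp$ it shows $[B]\zeta\in A^\perp\wedge B$ and $(1-[B])\zeta\in B^\perp$, so $\zeta\in(A\Andthen B)^\perp$, and then passes to double orthocomplements. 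Your limit argument is the concrete counterpart. The paper's version stays closer to the lattice-theoretic language (meets, joins, orthocomplements) and avoids invoking the explicit description of the join; yours is more hands-on and arguably easier to follow for a reader thinking in terms of operators and convergence.
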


\begin{proof}
Let $\xi \in A$. We calculate that for all $\eta \in A^\perp \wedge B$,
$
\langle \eta | [B] \xi \rangle = \langle [B] \eta | \xi \rangle = \langle \eta | \xi \rangle = 0. 
$
Thus, $[B]\xi \in (A^\perp \wedge B)^\perp = A \vee B^\perp$, and moreover, $[B]\xi \in (A \vee B^\perp) \wedge B = A \Andthen B$. We conclude that $[B]A$ is a subspace of $A \Andthen B$, and therefore, $\overline{[B]A} \subsetof A \Andthen B$.

Let $\zeta \in ([B]A)^\perp$. We calculate that for all $\xi \in A$,
$
\langle \xi | [B] \zeta \rangle = \langle [B] \xi | \zeta \rangle = 0.
$
Thus, $[B] \zeta \in A^\perp$, and moreover, $[B]\zeta \in A^\perp \wedge B$. Of course $(1-[B])\zeta \in B^\perp$, and hence $\zeta = [B] \zeta + (1-[B]) \zeta \in (A^\perp \wedge B) \vee B^\perp = ((A \vee B^\perp) \wedge B)^\perp = (A \Andthen B)^\perp$. We conclude that $([B]A)^\perp \subsetof  (A \Andthen B)^\perp$, and therefore, $A \Andthen B = (A\Andthen B)^{\perp\perp}\subsetof ([B]A)^{\perp\perp} = \overline{[B]A}$. Altogether, we have that $A \Andthen B = \overline{[B]A}$.
\end{proof}

\begin{lemma}\label{A.2}
For all closed subspaces $A_1, \ldots, A_n$, and $B$ of $\H$, we have that $A_1 \Andthen \cdots \Andthen A_n \leq B$ is equivalent to $[A_n]\cdots [A_1]\H \subsetof B$.
\end{lemma}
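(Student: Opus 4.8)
The plan is to reduce the claim to Proposition~\ref{A.1} by a straightforward induction on $n$. First I would establish the auxiliary identity
\begin{equation*}
A_1 \Andthen \cdots \Andthen A_n = \overline{[A_n] \cdots [A_1] \H}
\end{equation*}
for all $n \geq 0$, with the convention that the empty composition of projection operators is the identity on $\H$ and the empty Sasaki composition is $\H$ itself. The base cases $n = 0$ and $n = 1$ are immediate, the latter because $[A_1]\H = A_1$. For the inductive step, recall that $\Andthen$ associates to the left, so $A_1 \Andthen \cdots \Andthen A_{n+1} = (A_1 \Andthen \cdots \Andthen A_n) \Andthen A_{n+1}$; applying Proposition~\ref{A.1} with $A = A_1 \Andthen \cdots \Andthen A_n$ and $B = A_{n+1}$, and then the inductive hypothesis, gives $A_1 \Andthen \cdots \Andthen A_{n+1} = \overline{[A_{n+1}]\,\overline{[A_n]\cdots[A_1]\H}}$.

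The one point that needs care is collapsing the nested closure, which I would isolate as a separate observation: $\overline{[A_{n+1}]\,\overline{S}} = \overline{[A_{n+1}]S}$ for every subspace $S$ of $\H$. Indeed, $S$ is dense in $\overline S$, and since $[A_{n+1}]$ is a bounded, hence continuous, operator, its image $[A_{n+1}]S$ is dense in $[A_{n+1}]\overline{S}$; taking the closure of each side yields the identity. Applying this with $S = [A_n]\cdots[A_1]\H$ completes the induction and establishes the auxiliary identity.

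Finally, the lemma follows at once: the inequality $A_1 \Andthen \cdots \Andthen A_n \leq B$ in the lattice of closed subspaces says exactly that $\overline{[A_n]\cdots[A_1]\H} \subseteq B$, and since $B$ is closed, this holds if and only if $[A_n]\cdots[A_1]\H \subseteq B$. I anticipate no real obstacle; the only subtlety is the closure manipulation in the inductive step, which is why I would treat it as a lemma-within-the-proof rather than fold it silently into the calculation.
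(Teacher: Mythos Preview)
Your proof is correct and in fact slightly cleaner than the paper's. Both arguments proceed by induction on $n$, both rely on Proposition~\ref{A.1}, and both need the closure-collapse identity $\overline{[A_{n+1}]\,\overline S} = \overline{[A_{n+1}]S}$, which you rightly isolate and justify via continuity of bounded operators. The difference is that you first establish the stronger equality $A_1 \Andthen \cdots \Andthen A_n = \overline{[A_n]\cdots[A_1]\H}$ and then read off the lemma as an immediate consequence, whereas the paper carries the inequality $\leq B$ through the induction directly, invoking the Finch adjunction $C \Andthen A_{n+1} \leq B \Longleftrightarrow C \leq A_{n+1} \Implies B$ twice at each inductive step to detach and reattach $A_{n+1}$. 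Your route avoids that adjunction entirely and produces a useful identity as a byproduct; the paper's route stays closer to the shape of the statement and connects with the adjunction machinery used elsewhere in the paper.
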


\begin{proof}
In the case $n = 0$, the equivalence holds as a consequence of the convention that $A_1 \Andthen \cdots \Andthen A_n = \H$. In the case $n =1$, the equivalence holds because $[A_1]\H = A_1$. For the remaining cases, we argue by induction. Let $n \geq 1$, and assume that for all closed subspaces $A_1, \ldots, A_n$, and $B$ of $\H$, we have that $A_1 \Andthen \cdots \Andthen A_n \leq B$ is equivalent to $[A_n]\cdots [A_1]\H \subsetof B$. Then, for all closed subspaces $A_1, \ldots, A_n, A_{n+1}$, and $B$ of $\H$, we reason that 
\begin{align*} &
A_1 \Andthen \cdots \Andthen A_n \Andthen A_{n+1} \leq B
\EV
A_1 \Andthen \cdots \Andthen A_n \leq A_{n+1} \Implies B
\\ & \EV
[A_n] \cdots [A_1] \H \subsetof A_{n+1} \Implies B
\EV
\overline{[A_n] \cdots [A_1] \H} \leq A_{n+1} \Implies B
\\ & \EV
\overline{[A_n] \cdots [A_1] \H} \Andthen A_{n+1} \leq  B
\EV
\overline{[A_{n+1}] \overline{[A_n] \cdots [A_1] \H}} \leq B
\\ & \EV
\overline{[A_{n+1}] [A_n] \cdots [A_1] \H} \leq B
\EV
[A_{n+1}] [A_n] \cdots [A_1] \H \subsetof B,
\end{align*}
where the first and fourth equivalences follow by the adjunction between the Sasaki projection and the Sasaki arrow \cite{Finch} and the fifth equivalence follows by Proposition~\ref{A.1}.
Therefore, by induction on $n$, $A_1 \Andthen \cdots \Andthen A_n \leq B$ if and only if $[A_n]\cdots [A_1]\H \subsetof B$ for all closed subspaces $A_1, \ldots, A_n$, and $B$ of $\H$.
\end{proof}

\begin{theorem}\label{A.3}
Let $A_1, \ldots, A_n$, and $B$ be closed subspaces of $\H$. Then, the following are equivalent:
\begin{enumerate}
\item if a physical system that is modeled by $\H$ is prepared in any initial state and the propositions that are modeled by $A_1, \ldots, A_n$ are measured to be true in that order, then the proposition that is modeled by $B$ is true with probability one;
\item $A_1 \Andthen \cdots \Andthen A_n \leq B$.
\end{enumerate}
\end{theorem}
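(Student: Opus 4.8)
The plan is to reduce statement (2) to the operator-theoretic condition $[A_n]\cdots[A_1]\H \subsetof B$ by invoking Lemma~\ref{A.2}, and then to check that this condition is exactly the mathematical content of statement (1) once the standard postulates of quantum measurement are spelled out.

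First I would fix the measurement model. A preparation of the system in an initial state is a unit vector $\xi \in \H$; pure states suffice, since any statement that holds for all pure states holds for all mixed states. By the Born rule together with the von Neumann--L\"uders collapse postulate, measuring the proposition modeled by a closed subspace $A$ finds it true with probability $\|[A]\xi\|^2$, and when it does, the post-measurement state is the unit vector $[A]\xi/\|[A]\xi\|$. Iterating, after the propositions modeled by $A_1, \ldots, A_k$ have been measured to be true in that order, the state is $[A_k]\cdots[A_1]\xi$ renormalized; this sequence of outcomes has positive probability precisely when $[A_k]\cdots[A_1]\xi \neq 0$, and since a nonzero value of $[A_n]\cdots[A_1]\xi$ forces every initial segment $[A_k]\cdots[A_1]\xi$ to be nonzero, the event that $A_1, \ldots, A_n$ are all measured to be true in order has positive probability exactly when $[A_n]\cdots[A_1]\xi \neq 0$. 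Finally, in a state represented by a unit vector $\eta$, the proposition modeled by $B$ is true with probability $\|[B]\eta\|^2$, which equals $1$ if and only if $\eta \in B$.

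With this dictionary in place, statement (1) asserts that for every unit vector $\xi \in \H$ with $[A_n]\cdots[A_1]\xi \neq 0$, the vector $[A_n]\cdots[A_1]\xi$ lies in $B$. I would then observe that this is equivalent to $[A_n]\cdots[A_1]\H \subsetof B$: the reverse implication is immediate, and for the forward implication, given an arbitrary $\xi \in \H$, either $[A_n]\cdots[A_1]\xi = 0 \in B$, or $\xi$ may be rescaled to a unit vector, to which statement (1) applies and shows that $[A_n]\cdots[A_1]\xi$ lies in $B$ after rescaling back. Combining this equivalence with Lemma~\ref{A.2}, which gives that $[A_n]\cdots[A_1]\H \subsetof B$ is equivalent to $A_1 \Andthen \cdots \Andthen A_n \leq B$, completes the argument.

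The proof is short, and the only real subtlety is definitional rather than mathematical: one must commit to a precise measurement model, namely the Born rule and the L\"uders update rule, and one must read statement (1) as a material conditional, so that the vacuous case $[A_n]\cdots[A_1]\xi = 0$ imposes no constraint. Once these conventions are fixed, everything reduces to Lemma~\ref{A.2} together with the elementary fact that a bounded operator maps $\H$ into the closed subspace $B$ if and only if every unit vector with nonzero image has that image in $B$.
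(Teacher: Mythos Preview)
Your proposal is correct and follows essentially the same approach as the paper: fix the Born rule and L\"uders collapse as the measurement model, unwind statement~(1) to the condition that $[A_n]\cdots[A_1]\xi \in B$ for every unit vector $\xi$ with nonzero image, observe this is equivalent to $[A_n]\cdots[A_1]\H \subsetof B$, and invoke Lemma~\ref{A.2}. Your explicit remarks on pure states sufficing and on the nonvanishing of intermediate products are minor elaborations the paper leaves implicit, but the structure is the same.
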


\begin{proof}
If the physical system is in vector state $\xi$ and the proposition modeled by $A$ is verified to be true, then after this measurement, the physical system is in vector state $\xi ' = (\|[A]\xi\|)\inv [A] \xi$. The scalar $\|[A]\xi\|$ is the square-root of the probability that the proposition modeled by $A$ true in vector state $\xi$, and thus, it is necessarily nonzero if that proposition can be verified.

Let $\xi_0$ be the initial vector state of the system. For each $i \in \{1, \ldots, n\}$, let $p_i$ be the probability that the propositions modeled by $A_1, \ldots, A_i$ can be verified in that order, and let $\xi_i$ be the vector state of the system after such a verification. By a straightforward inductive argument, $p_i = \|[A_i] \cdots [A_1] \xi_0\|^2$ and $\xi_i = p_i^{-1/2}[A_i] \cdots [A_1] \xi_0$ if $p_i \neq 0$. In particular, $p_n = \|[A_n] \cdots [A_1] \xi_0\|^2$ and $\xi_n = p_n^{-1/2}[A_n] \cdots [A_1] \xi_0$ if $p_n \neq 0$. Thus, condition (1) is equivalent to the condition that for each vector state $\xi_0$, if $\|[A_n] \cdots [A_1]\xi_0\|^2 \neq 0$, then $\|[A_n] \cdots [A_1]\xi_0\|\inv [A_n] \cdots [A_1]\xi_0 \in B$.

We obtain a sequence of equivalent conditions:
\begin{itemize}
\item condition (1);
\item for each unit vector $\xi_0 \in \H$, if $\|[A_n] \cdots [A_1]\xi_0\|^2 \neq 0$, then \begin{equation*}\|[A_n] \cdots [A_1]\xi_0\|\inv [A_n] \cdots [A_1]\xi_0 \in B;\end{equation*}
\item for each unit vector $\xi_0 \in \H$, if $\|[A_n] \cdots [A_1]\xi_0\|^2 \neq 0$, then $[A_n] \cdots [A_1]\xi_0 \in B;$
\item for each unit vector $\xi_0 \in \H$, $[A_n] \cdots [A_1]\xi_0 \in B;$
\item $[A_n] \cdots [A_1]\H \subsetof B$;
\item condition (2).
\end{itemize}
The last equivalence follows by Lemma~\ref{A.2}. Thus, the theorem is proved.
\end{proof}

\section{}\label{B}

In this appendix, as in \cite{discretequantumstructures}, we write $\Rel(\X_1, \ldots, \X_n)$ for the complete orthomodular lattice of all binary relations $\X_1 \times \cdots \times \X_n \to \mathbf 1$, where $\X_1, \ldots, \X_n$ are quantum sets and $(\cat{qRel}, \times, \mathbf 1)$ is the symmetric monoidal category of quantum sets and binary relations. We also write $\[\cdot\]_\qqq$ for the interpretation of nonduplicating formulas and terms as it is defined in \cite{discretequantumstructures}.

\begin{lemma}\label{B.1}
Let $\X$, $\Y$, and $\Z$ be quantum sets, let $F$ be a function $\X \to \Y$, and let $R$ be a binary relation $\Y \times \Z \to \mathbf 1$. Then,
\begin{align*}&
\sup\{P \in \Rel (\X) \suchthat P \times \top_\Z \leq R \circ (F \times I_\Z) \}
=
\sup\{Q \in \Rel(\Y) \suchthat Q \times \top_\Z \leq R\} \circ F .
\end{align*}
\end{lemma}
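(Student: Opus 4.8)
The plan is to establish the two inequalities $(\sup S_\Y)\circ F \le \sup S_\X$ and $\sup S_\X \le (\sup S_\Y)\circ F$ separately, where I abbreviate $S_\X = \{P \in \Rel(\X) \suchthat P \times \top_\Z \le R \circ (F \times I_\Z)\}$, $S_\Y = \{Q \in \Rel(\Y) \suchthat Q \times \top_\Z \le R\}$, $L = \sup S_\X$, and $Q_0 = \sup S_\Y$. The only structural input needed is that $(\cat{qRel}, \times, \mathbf 1)$ is a monoidal category in which composition is monotone in each variable and in which $\times$ is join-continuous in each variable \cite{quantumsets}, together with the identities $F \circ F^\dagger \le I_\Y$ and $I_\X \le F^\dagger \circ F$ that hold for any function $F$ \cite{quantumsets}. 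Join-continuity of $(-) \times \top_\Z$ ensures that $L \in S_\X$ and $Q_0 \in S_\Y$, that is, $L \times \top_\Z \le R \circ (F \times I_\Z)$ and $Q_0 \times \top_\Z \le R$.

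For the inequality $Q_0 \circ F \le L$ it suffices to verify that $Q_0 \circ F \in S_\X$. Since $\top_\Z = \top_\Z \circ I_\Z$, the interchange law gives $(Q_0 \circ F) \times \top_\Z = (Q_0 \times \top_\Z) \circ (F \times I_\Z)$, and then $Q_0 \times \top_\Z \le R$ together with monotonicity of composition yields $(Q_0 \circ F) \times \top_\Z \le R \circ (F \times I_\Z)$, as required.

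For the inequality $L \le Q_0 \circ F$, the key step is to compose the inequality $L \times \top_\Z \le R \circ (F \times I_\Z)$ on the right with $F^\dagger \times I_\Z \colon \Y \times \Z \to \X \times \Z$. By the interchange law the left-hand side becomes $(L \circ F^\dagger) \times \top_\Z$, while by associativity and the interchange law the right-hand side becomes $R \circ ((F \circ F^\dagger) \times I_\Z)$, which is $\le R \circ (I_\Y \times I_\Z) = R$ because $F \circ F^\dagger \le I_\Y$, composition and $\times$ are monotone, and $I_\Y \times I_\Z = I_{\Y \times \Z}$. Hence $(L \circ F^\dagger) \times \top_\Z \le R$, so $L \circ F^\dagger \in S_\Y$, giving $L \circ F^\dagger \le Q_0$ and therefore $L \circ F^\dagger \circ F \le Q_0 \circ F$. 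Finally $I_\X \le F^\dagger \circ F$ and monotonicity of composition give $L = L \circ I_\X \le L \circ F^\dagger \circ F \le Q_0 \circ F$. Combining the two inequalities proves the lemma.

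I do not expect a genuine obstacle here: once the monoidal formalism of $\cat{qRel}$ is in place, the computation is pure bookkeeping, and the manipulation with $F^\dagger$ above is the only slightly nonobvious move. The single point that must be pinned down carefully — and the only place where the particular theory of quantum relations enters — is that $(-) \times \top_\Z$ is join-continuous, so that the two suprema genuinely satisfy their defining inequalities, and that a function satisfies $F \circ F^\dagger \le I_\Y$ and $I_\X \le F^\dagger \circ F$; both facts are available from \cite{quantumsets}.
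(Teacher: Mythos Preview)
Your proof is correct and follows essentially the same route as the paper's: both directions are established exactly as you describe, using the interchange law for $\times$ and $\circ$, the function inequalities $F\circ F^\dagger \le I_\Y$ and $I_\X \le F^\dagger\circ F$, and monotonicity of composition. Your explicit remark that join-continuity of $(-)\times\top_\Z$ is what guarantees $L\in S_\X$ and $Q_0\in S_\Y$ is a point the paper leaves implicit.
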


\begin{proof}
Let $S_1 = \sup\{P \in \Rel (\X) \suchthat P \times \top_\Z \leq R \circ (F \times I_\Z) \}$, and let $S_2 = \sup\{Q \in \Rel(\Y) \suchthat Q \times \top_\Z \leq R\}$. We are to prove that $S_1 = S_2 \circ F$. By the definition of $S_2$, we have that $S_2 \times \top_\Z \leq R$, and thus, we also have that $(S_2 \circ F) \times \top_\Z = (S_2 \times \top_\Z) \circ (F \times I_\Z) \leq R \circ (F \times I_\Z)$. Therefore, $S_2 \circ F \leq S_1$ by the definition of $S_1$. Similarly, by the definition of $S_1$, we have that $S_1 \times \top_\Z \leq R \circ (F \times I_\Z)$, and thus, we also have that $(S_1 \circ F^\dagger) \times \top_\Z = (S_1 \times \top_\Z) \circ (F^\dagger \times I_\Z) \leq R \circ (F \times I_\Z) \circ (F^\dagger \times I_\Z) = R \circ ((F \circ F^\dagger) \times I_\Z) \leq R \circ (I_\Y \times I_\Z) = R$ because $F \circ F^\dagger \leq I_\Y$ by the definition of a function between quantum sets. Hence $S_1 \circ F^\dagger \leq S_2$ by the definition of $S_2$. Therefore, $S_1 = S_1 \circ I_\X \leq S_1 \circ F^\dagger \circ F \leq S_2 \circ F$ because $I_\X \leq F^\dagger \circ F$ by the definition of a function between quantum sets. Altogether, we have that $S_1 = S_2 \circ F$, as desired.
\end{proof}

We make two observations for the next proof: First, \cite{discretequantumstructures}*{Prop.~2.3.3}, which relates $\[\overline x \suchthat \phi(\overline x)\]$ to $\[\overline x' \suchthat \phi(\overline x)\]$ when the variables of $\overline x$ all appear in $\overline x'$, applies to nonduplicating formulas as well as to primitive formulas. Indeed, a nonduplicating formula is interpreted by first translating it into a primitive formula \cite{discretequantumstructures}*{sec.~2.7}. Second, $\[\overline x \suchthat t(\overline x)\]$ is a function for any term $t(\overline x)$ by a simple inductive argument that applies \cite{discretequantumstructures}*{Lem.~3.5.3} at each step.

\begin{proposition}\label{B.2}
Let $t(x_1, \ldots, x_n)$ be a nonduplicating term, and let $\phi(y, z_1, \ldots, z_m)$ be a nonduplicating formula. Let $\X_1, \ldots, \X_n$ be the sorts of $x_1, \ldots, x_n$, respectively, let $\Y$ be the sort of $y$, and let $\Z_1, \ldots, \Z_m$ be the sorts of $z_1, \ldots, z_m$, respectively. 
Let $\overline x = (x_1, \ldots, x_n)$, and let $\overline z = (z_1, \ldots, z_m)$; let $\X = \X_1 \times \cdots \times \X_n$, and let $\Z = \Z_1 \times \cdots \times \Z_m$.
Assume that the sort of $t(x_1, \ldots, x_n)$ is $\Y$ and that $\overline x$ and $\overline z$ have no variables in common.
Then,
\begin{equation*}\[\overline x, \overline z \suchthat \phi(t(x_1, \ldots, x_n), z_1, \ldots, z_m)\]_\qqq = \[y, \overline z \suchthat \phi(y, z_1, \ldots, z_m)\]_\qqq \circ (\[\overline x \suchthat t(x_1, \ldots, x_n)\]_\qqq \times I_{\Z}).\end{equation*}
\end{proposition}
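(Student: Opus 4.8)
The plan is to fix the term $t(x_1,\ldots,x_n)$ and argue by induction on the structure of the nonduplicating formula $\phi(y,z_1,\ldots,z_m)$, the slogan being that substituting a term amounts to precomposing with the function that interprets it. The three ingredients are: the analogous identity for substitution into \emph{terms}, which handles atomic formulas; the fact that precomposition with a function is a homomorphism of ortholattices of binary relations, which handles the propositional connectives; and Lemma~\ref{B.1}, which handles the universal quantifier. Throughout, the tuples of free variables attached to subformulas must be kept in a consistent order; this is arranged by repeated appeal to \cite{discretequantumstructures}*{Prop.~2.3.3}, which, as noted before the statement, applies to nonduplicating formulas, and by the fact that $\[\overline x\suchthat s\]_\qqq$ is a function for every term $s$.

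First I would record the term-level version: for a nonduplicating term $s(y,z_1,\ldots,z_m)$ of sort $\Y$,
\begin{equation*}
\[\overline x,\overline z\suchthat s(t(\overline x),z_1,\ldots,z_m)\]_\qqq = \[y,\overline z\suchthat s(y,z_1,\ldots,z_m)\]_\qqq \circ (\[\overline x\suchthat t\]_\qqq \times I_\Z).
\end{equation*}
This follows by induction on $s$: if $s$ is a variable it is immediate (either $s$ is $y$ or $s$ is among $\overline z$), after adjusting dummy variables by \cite{discretequantumstructures}*{Prop.~2.3.3}; and if $s = f(s_1,\ldots,s_k)$ it follows from \cite{discretequantumstructures}*{Lem.~3.5.3} together with functoriality of the monoidal product $\times$. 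For the base case of the formula induction, an atomic subformula of $\phi$ has the form $R(s_1,\ldots,s_k)$ with the $s_i$ terms whose variables are pairwise disjoint, because $\phi$ is nonduplicating, so $\[R(s_1,\ldots,s_k)\]_\qqq = R \circ (\[s_1\]_\qqq \times \cdots \times \[s_k\]_\qqq)$ up to a reordering of variables. The variable $y$ occurs in at most one $s_i$; substituting $t$ there, applying the term-level identity, and using functoriality of $\times$ together with \cite{discretequantumstructures}*{Prop.~2.3.3} to restore the variable order yields the claim for atomic $\phi$. If $y$ does not occur in $\phi$ at all, the two sides agree because $\top_\Y \circ \[\overline x\suchthat t\]_\qqq = \top_\X$ for the function $\[\overline x\suchthat t\]_\qqq\: \X \to \Y$, exactly as in the proof of Corollary~\ref{5.4}.

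For the inductive step, if $\phi$ is $\phi_1\And\phi_2$, $\Not\phi_1$, or $\phi_1\Implies\phi_2$, then $\[\phi\]_\qqq$ is obtained from $\[\phi_1\]_\qqq$ and $\[\phi_2\]_\qqq$ by the ortholattice operations, using in the last case that $\[\phi_1\Implies\phi_2\]_\qqq = \Not\[\phi_1\]_\qqq \Or (\[\phi_1\]_\qqq \And \[\phi_2\]_\qqq)$; since the map $P \mapsto P \circ (\[\overline x\suchthat t\]_\qqq \times I_\Z)$ is precomposition with a function, it preserves the ortholattice operations on binary relations by linear algebra \cite{quantumsets}*{Def.~3.8}, so the induction hypothesis for $\phi_1$ and $\phi_2$ gives the claim. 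If $\phi$ is $(\forall w)\psi$, rename $w$ so that it does not occur in $\overline x$, $\overline z$, or $t$, and write $\W$ for its sort. Regarding $\psi$ as a nonduplicating formula in $y$ and in the variables $\overline z,w$, the induction hypothesis gives
\begin{equation*}
\[\overline x,\overline z,w\suchthat \psi(t(\overline x),\overline z,w)\]_\qqq = \[y,\overline z,w\suchthat \psi(y,\overline z,w)\]_\qqq \circ (\[\overline x\suchthat t\]_\qqq \times I_\Z \times I_\W)
\end{equation*}
after reordering via \cite{discretequantumstructures}*{Prop.~2.3.3}. Now apply Lemma~\ref{B.1} with its $\X$, $\Y$, $\Z$, $F$, $R$ taken to be $\X\times\Z$, $\Y\times\Z$, $\W$, $\[\overline x\suchthat t\]_\qqq \times I_\Z$, and $\[y,\overline z,w\suchthat\psi\]_\qqq$, the variables arranged so that $\W$ is last: the displayed identity identifies $R \circ (F \times I_\W)$ with $\[\overline x,\overline z,w\suchthat\psi(t(\overline x),\overline z,w)\]_\qqq$, and the two suprema appearing in Lemma~\ref{B.1} are by \cite{discretequantumstructures}*{Def.~2.3.2} the interpretations of $(\forall w)\psi(t(\overline x),\overline z,w)$ and of $(\forall w)\psi(y,\overline z,w)$, so Lemma~\ref{B.1} yields the desired equation and the induction is complete.

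I expect the bookkeeping to be the only real difficulty, since the conceptual content is uniform. The most delicate point is the base case: it is where the nonduplicating hypothesis is actually used, so that an atomic formula $R(s_1,\ldots,s_k)$ factors as a product of the $\[s_i\]_\qqq$ over disjoint variable blocks, and where the definition of $\[\cdot\]_\qqq$ on atomic formulas and its compatibility with \cite{discretequantumstructures}*{Lem.~3.5.3} must be unwound carefully. The secondary subtlety is matching the hypotheses of Lemma~\ref{B.1} in the quantifier step, where one must group the free variables so that the quantified sort $\W$ occupies the role of the lemma's factor $\Z$; once this grouping is arranged, that case is immediate.
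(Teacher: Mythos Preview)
Your proposal is correct and matches the paper's own proof in all essentials: both fix the term $t$ and induct on the formula, handle the atomic base case via the term-level substitution identity (the paper cites \cite{discretequantumstructures}*{Lems.~3.5.2, 3.5.3} where you sketch the induction on $s$), handle the connectives by the fact that precomposition with a function preserves the ortholattice operations (the paper invokes \cite{quantumsets}*{Thm.~B.8} rather than Def.~3.8), and handle $\forall$ by instantiating Lemma~\ref{B.1} exactly as you describe. One small slip: in your term-level statement you wrote ``of sort $\Y$'' for $s$, but $s$ should be allowed arbitrary sort; this does not affect the argument.
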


\begin{proof}
We prove the proposition by induction on the formula $\phi(y, \overline z)$ for a fixed term $t(\overline x)$. The base case, when $\phi(y, \overline z)$ is an atomic formula, follows from Lemmas 3.5.2 and 3.5.3 of \cite{discretequantumstructures} with Propositions 2.3.3 and 3.1.1 of \cite{discretequantumstructures} serving to handle the expansion and permutation of contexts. The proof consists of tedious bookkeeping that differs only superficially from the bookkeeping for classical predicate logic, and it is omitted.

If $\phi(y, \overline z)$ is of the form $\Not \psi(y, \overline z)$, then we argue that
\begin{align*}
\[\overline x, \overline z \suchthat & \Not \psi(t(\overline x), \overline z)\]_\qqq
= 
\Not \[\overline x, \overline z \suchthat \psi(t(\overline x), \overline z)\]_\qqq
=
\Not (\[y, \overline z \suchthat \psi(y, \overline z)\]_\qqq \circ (\[\overline x \suchthat t(\overline x)\]_\qqq \times I_{\Z}))
\\ & =
\Not \[y, \overline z \suchthat \psi(y, \overline z)\]_\qqq \circ (\[\overline x \suchthat t(\overline x)\]_\qqq \times I_{\Z})
=
\[y, \overline z \suchthat \Not \psi(y, \overline z)\]_\qqq \circ (\[\overline x \suchthat t(\overline x)\]_\qqq \times I_{\Z}),
\end{align*}
where the second equality follows by the induction hypothesis and the third equality follows by \cite{quantumsets}*{Thm.~B.8}. We argue likewise if $\phi(y,\overline z)$ is of the form $\psi_1(y, \overline z) \And \psi_2(y, \overline z)$, if it is of the form $\psi_1(y, \overline z) \Or \psi_2(y, \overline z)$, or if it is of the form $\psi_1(y, \overline z) \Implies \psi_2(y, \overline z)$.

If $\phi(y, \overline z)$ is of the form $(\forall z_{m+1})\psi(y, \overline z, z_{m+1})$ for some variable $z_{m+1}$ of some sort $\Z_{m+1}$, then we argue that
\begin{align*}&
\[\overline x, \overline z \suchthat (\forall z_{m+1})\psi(t(\overline x), \overline z, z_{m+1})\]_\qqq
\\ & = 
\sup\{
P \in \Rel(\X_1, \ldots, \X_n, \Z_1, \ldots, \Z_m)
\suchthat P \times \top_{\Z_{m+1}} \leq \[\overline x, \overline z, z_{m+1} \suchthat \psi(t(\overline x), \overline z, z_{m+1})\]_\qqq
\}
\\ & = 
\sup\{
P \in \Rel(\X_1, \ldots, \X_n, \Z_1, \ldots, \Z_m) 
\\ & \hspace{15ex}
\suchthat P \times \top_{\Z_{m+1}}\leq \[ y, \overline z, z_{m+1} \suchthat \psi(y, \overline z, z_{m+1})\]_\qqq \circ (\[\overline x \suchthat t(\overline x)\]_\qqq \times I_\Z \times I_{\Z_{m+1}})\}
\\ & = 
\sup\{
Q \in \Rel(\Y, \Z_1, \ldots, \Z_m)
\suchthat Q \times \top_{\Z_{m+1}}\leq \[ y, \overline z, z_{m+1} \suchthat \psi(y, \overline z, z_{m+1})\]_\qqq\} 
\\ & \hspace{64ex}
\circ (\[\overline x \suchthat t(\overline x)\]_\qqq \times I_{\Z})
\\ & =
\[ y, \overline z \suchthat (\forall z_{m+1}) \psi(y, \overline z, z_{m+1})\]_\qqq \circ (\[\overline x \suchthat t(\overline x)\]_\qqq \times I_{\Z}),
\end{align*}
where the second equality follows by the induction hypothesis and the third equality follows by Lemma~\ref{B.1}, which is applied to the function $F = \[\overline x \suchthat t(\overline x)\]_\qqq \times I_{\Z}\: \X \times \Z \to \Y \times \Z$ and the binary relation $R = \[ y, \overline z, z_{m+1} \suchthat \psi(y, \overline z, z_{m+1})\]_\qqq \: \X \times \Z \times \Z_{m+1} \to \mathbf 1$. We argue likewise if $\phi(y, \overline z)$ is of the form $(\exists z_{m+1})\psi(y, \overline z, z_{m+1})$.

By induction over the class of all nonduplicating formulas $\phi(y,\overline z)$ for arbitrary tuples $\overline z$, we conclude that the claimed equality holds universally.
\end{proof}

\section*{Acknowledgement}

I thank John Harding for his comments on orthomodular lattices.

\end{document}